\DeclareMathOperator{\ch}{ch}
\DeclareMathOperator{\td}{td}
\DeclareMathOperator{\eu}{eu}
\DeclareMathOperator{\RHom}{RHom}
\DeclareMathOperator{\fRHom}{R\mathcal{H}om}
\DeclareMathOperator{\Hom}{Hom}
\DeclareMathOperator{\HH}{HH}
\DeclareMathOperator{\bHH}{\mathds{HH}}
\DeclareMathOperator{\hh}{hh}
\DeclareMathOperator{\id}{id}
\DeclareMathOperator{\opp}{op}
\DeclareMathOperator{\Mod}{Mod}
\DeclareMathOperator{\Hn}{H}
\DeclareMathOperator{\tr}{Tr}
\DeclareMathOperator{\gr}{gr}
\DeclareMathOperator{\Rg}{R\Gamma}
\DeclareMathOperator{\dR}{R}
\DeclareMathOperator{\Ob}{Ob}
\DeclareMathOperator{\Supp}{Supp}
\DeclareMathOperator{\sGamma}{\Gamma}
\begin{document}



\newcommand{\On}[1]{\mathcal{O}_{#1}}
\newcommand{\En}[1]{\mathcal{E}_{#1}}
\newcommand{\Fn}[1]{\mathcal{F}_{#1}} 
\newcommand{\tFn}[1]{\mathcal{\tilde{F}}_{#1}}
\newcommand{\hum}[1]{hom_{\mathcal{A}}({#1})}
\newcommand{\hcl}[2]{#1_0 \lbrack #1_1|#1_2|\ldots|#1_{#2} \rbrack}
\newcommand{\hclp}[3]{#1_0 \lbrack #1_1|#1_2|\ldots|#3|\ldots|#1_{#2} \rbrack}
\newcommand{\catMod}{\mathsf{Mod}}
\newcommand{\Der}{\mathsf{D}}
\newcommand{\Ds}{D_{\mathbb{C}}}
\newcommand{\DG}{\mathsf{D}^{b}_{dg,\mathbb{R}-\mathsf{C}}(\mathbb{C}_X)}
\newcommand{\lI}{[\mspace{-1.5 mu} [}
\newcommand{\rI}{] \mspace{-1.5 mu} ]}
\newcommand{\Ku}[2]{\mathfrak{K}_{#1,#2}}
\newcommand{\iKu}[2]{\mathfrak{K^{-1}}_{#1,#2}}
\newcommand{\Be}{B^{e}}
\newcommand{\op}[1]{#1^{\opp}}
\newcommand{\N}{\mathbb{N}}
\newcommand{\Ab}[1]{#1/\lbrack #1 , #1 \rbrack}
\newcommand{\Du}{\mathbb{D}}
\newcommand{\C}{\mathbb{C}}
\newcommand{\Z}{\mathbb{Z}}
\newcommand{\w}{\omega}
\newcommand{\K}{\mathcal{K}}
\newcommand{\Hoc}{\mathcal{H}\mathcal{H}}
\newcommand{\env}[1]{{\vphantom{#1}}^{e}{#1}}
\newcommand{\eA}{{}^eA}
\newcommand{\eB}{{}^eB}
\newcommand{\eC}{{}^eC}
\newcommand{\cA}{\mathcal{A}} 
\newcommand{\cB}{\mathcal{B}}
\newcommand{\cR}{\mathcal{R}}
\newcommand{\cL}{\mathcal{L}}
\newcommand{\cO}{\mathcal{O}}
\newcommand{\cM}{\mathcal{M}}
\newcommand{\cN}{\mathcal{N}}
\newcommand{\cK}{\mathcal{K}}
\newcommand{\cC}{\mathcal{C}}
\newcommand{\Hper}{\Hn^0_{\textrm{per}}}
\newcommand{\Dper}{\Der_{\mathrm{perf}}}
\newcommand{\Yo}{\textrm{Y}}
\newcommand{\gqcoh}{\mathrm{gqcoh}}
\newcommand{\coh}{\mathrm{coh}}
\newcommand{\cc}{\mathrm{cc}}
\newcommand{\qcc}{\mathrm{qcc}}
\newcommand{\qcoh}{\mathrm{qcoh}}
\newcommand{\obplus}[1][i \in I]{\underset{#1}{\overline{\bigoplus}}}
\newcommand{\Lte}{\mathop{\otimes}\limits^{\rm L}}
\newcommand{\pt}{\textnormal{pt}}
\newcommand{\A}[1][X]{\cA_{{#1}}}
\newcommand{\dA}[1][X]{\cC_{X_{#1}}}
\newcommand{\conv}[1][]{\mathop{\circ}\limits_{#1}}
\newcommand{\sconv}[1][]{\mathop{\ast}\limits_{#1}}
\newcommand{\reim}[1]{\textnormal{R}{#1}_!}
\newcommand{\roim}[1]{\textnormal{R}{#1}_\ast}
\newcommand{\ldetens}{\overset{\mathrm{L}}{\underline{\boxtimes}}}
\newcommand{\br}{\bigr)}
\newcommand{\bl}{\bigl(}
\newcommand{\sC}{\mathscr{C}}
\newcommand{\ucat}{\mathbf{1}}
\newcommand{\ubtimes}{\underline{\boxtimes}}
\newcommand{\uLte}{\mathop{\underline{\otimes}}\limits^{\rm L}} 
\newcommand{\Lp}{\mathrm{L}p}

\newtheorem{theorem}{Theorem}[section]
\newtheorem{lemma}[theorem]{Lemma}
\newtheorem{Notation}{Notation}
\newtheorem{proposition}{Proposition}
\newtheorem{corollary}{Corollary}

\theoremstyle{definition}
\newtheorem{definition}[theorem]{Definition}
\newtheorem{example}[theorem]{Example}

\theoremstyle{remark}
\newtheorem{remark}[theorem]{Remark}

\numberwithin{equation}{section}

\title[The Lefschetz-Lunts formula for DQ-modules]{The Lefschetz-Lunts formula for deformation quantization modules}



\author{Fran\c{c}ois Petit
}


\address{F. Petit, Max Planck Institute for Mathematics, Vivatsgasse 7, 53111 Bonn, Germany.}
\email{petit@mpim-bonn.mpg.de}           


\begin{abstract}
We adapt to the case of deformation quantization modules a formula of V.~Lunts~\cite{Lunts} who 
calculates the trace of a kernel acting on Hochschild homology.
\keywords{Deformation quantization \and Hochschild Homology \and Lefschetz theorems}
\end{abstract}

\maketitle

\section{Introduction}

Inspired by the work of D. Shklyarov (see \cite{shklyarov}), V. Lunts has established in \cite{Lunts} a Lefschetz type formula which calculates the trace of a coherent kernel acting on the Hochschild homology of a projective variety (Theorem \ref{lunts1}). This result has inspired several other works (\cite{cisinski,poly}). In \cite{cisinski}, Cisinski and Tabuada recover the result of Lunts via the theory of non-commutative motives. In \cite{poly}, Polischuk proves similar formulas and applies them to matrix factorisation. The aim of this paper is to adapt Lunts formula to the case of deformation quantization modules (DQ-modules) of Kashiwara-Schapira on complex Poisson manifolds. For that purpose, we develop an abstract framework which allows one to obtain Lefschetz-Lunts type formulas in symmetric monoidal categories endowed with some additional data. 

 Our proof relies essentially on two facts. The first one is that the composition operation on the Hochschild homology is compatible in some sense with the symmetric monoidal structures of the categories involved. The second one is the functoriality of the Hochschild class with respect to composition of kernels. This suggest that the Lefeschtz-Lunts formula is a 2-categorical statement and that it might be possible to build a set-up, in the spirit of \cite{caldararu}, which would encompass simultaneously these two aspects. 

Let us compare briefly the different approaches and settings of \cite{Lunts}, \cite{cisinski} and \cite{poly} to ours. As already mentioned, we are working in the framework of deformation quantization modules over complex manifolds. 

The approach of Lunts is based on a certain list of properties of the Hochschild homology of algebraic varieties (see \cite[§3]{Lunts}). These properties mainly concern the behaviour of Hochschild homology with respect to the composition of kernels and its functoriality. A straightforward consequence of these properties is that the morphism $X \to \pt$ induces a map from the Hochschild homology of $X$ to the ground field $k$. Such a map does not exist in the theory of DQ-modules. Thus, it is not possible to integrate a single class with values in Hochschild homology and one has to integrate a pair of classes. Then, it seems that the method of V. Lunts cannot be carried out in our context.

In \cite{cisinski}, the authors showed that the results of V. Lunts for projective varieties can be derived from a very general statement for additive invariants of smooth and proper differential graded category in the sense of Kontsevich. However, it is not clear that this approach would work for DQ-modules even in the algebraic case. Indeed, the  results used to relate non-commutative motives to more classical geometric objects rely on the existence of a compact generator for the derived category of quasi-coherent sheaves which is a classical generator of the derived category of coherent sheaves. To the best of our knowledge, there are no such results for DQ-modules. Similarly, the approach of \cite{poly} does not seem to be applicable to DQ-modules.

The paper is organised as follow. In the first part, we sketch a formal framework in which we can get a formula for the trace of a class acting on a certain homology, starting from a symmetric monoidal category endowed with some specific data. In the second part, we briefly review, following \cite{KS3}, some elements of the theory of DQ-modules. The last part is mainly devoted to the proof of the Lefschetz-Lunts theorems for DQ-modules. Then, we briefly explain how to recover some of Lunts's results.

\begin{flushleft}
\textbf{Acknowledgement:} I would like to thank Damien Calaque and Michel Vaquié for their careful reading of the manuscript and numerous suggestions which have allowed substantial improvements.
\end{flushleft}

\section{A general framework for Lefschetz type theorems} \label{generalframe}

\subsection{A few facts about symmetric monoidal categories and traces}
In this subsection, we recall a few classical facts concerning dual pairs and traces in symmetric monoidal categories. References for this subsection are \cite[Chap.4]{categories_and_sheaves}, \cite{may}, \cite{ponto}.

Let $\mathscr{C}$ be a symmetric monoidal category with product $\otimes$, unit object $\mathbf{1}_\sC$ and symmetry isomorphism $\sigma$. All along this paper, we identify $(X \otimes Y) \otimes Z$ and $X \otimes (Y \otimes Z)$.

\begin{definition}
We say that $X \in \Ob(\sC)$ is dualizable if there is $Y \in \Ob(\sC)$ and two morphisms, $\eta: \mathbf{1}_\sC \to X \otimes Y$,  $\varepsilon: Y \otimes X \to \mathbf{1}_\sC$ called coevaluation and evaluation such that the condition (a) and (b) are satisfied:
\begin{enumerate}[(a)]
\item The composition $X\simeq \ucat_\sC \otimes X \stackrel{\eta \otimes \id_X}{\to} X \otimes Y \otimes X \stackrel{\id_X \otimes \varepsilon}{\to} X \otimes \ucat_\sC \simeq X$ is the identity of $X$.

\item The composition $Y\simeq Y \otimes \ucat_\sC \stackrel{\id_Y \otimes \eta}{\to} Y \otimes X \otimes Y \stackrel{\varepsilon \otimes \id_Y}{\to} \ucat_\sC \otimes Y \simeq Y$ is the identity of $Y$.
\end{enumerate}
We call $Y$ a dual of $X$ and say that $(X,Y)$ is a dual pair.
\end{definition}

We shall prove that some diagrams commute. For that purpose recall the useful lemma below communicated to us by Masaki Kashiwara.

\begin{lemma}\label{kas}
Let $\mathscr{C}$ be a monoidal category with unit. Let $(X, Y)$ be a dual pair with coevaluation and evaluation morphisms  
\begin{equation*}
\ucat_\sC \stackrel{\eta}{\to}X \otimes Y, \; Y \otimes X\stackrel{\varepsilon}{\to}\ucat_\sC.
\end{equation*}
Let $f:\ucat_\sC \to X \otimes Y$ be a morphism such that $(\id_X \otimes \varepsilon) \circ (f \otimes \id_X)= \id_X$. Then $f= \eta$.
\end{lemma}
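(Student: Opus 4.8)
The plan is to characterize $\eta$ purely in terms of $\varepsilon$ by exploiting the two defining identities (a) and (b) of the dual pair $(X,Y)$ together with the bifunctoriality of $\otimes$. The first thing to observe is that the hypothesis on $f$ is exactly identity (a) with $f$ substituted for $\eta$; thus $f$ and $\eta$ play symmetric roles with respect to (a), and the only extra information available is that the genuine coevaluation $\eta$ \emph{also} satisfies (b). The idea is therefore to insert identity (b) for $\eta$ into $f$ and then collapse the resulting diagram using the hypothesis on $f$.

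The crucial step is a \emph{sliding} identity coming from the interchange law: since both $f$ and $\eta$ have source $\ucat_\sC$, bifunctoriality of the tensor product gives
\begin{equation*}
(\id_{X\otimes Y}\otimes\eta)\circ f = f\otimes\eta = (f\otimes\id_{X\otimes Y})\circ\eta,
\end{equation*}
after the usual identifications $\ucat_\sC\otimes(-)\simeq(-)\simeq(-)\otimes\ucat_\sC$. This lets one move $\eta$ from the right of $f$ to its left.

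Next I would write $f=(\id_X\otimes\id_Y)\circ f$ and replace $\id_Y$ by the right-hand side of identity (b), namely $\id_Y=(\varepsilon\otimes\id_Y)\circ(\id_Y\otimes\eta)$, which produces
\begin{equation*}
f=(\id_X\otimes\varepsilon\otimes\id_Y)\circ(\id_{X\otimes Y}\otimes\eta)\circ f.
\end{equation*}
Applying the sliding identity above to the factor $(\id_{X\otimes Y}\otimes\eta)\circ f$ rewrites this as
\begin{equation*}
f=(\id_X\otimes\varepsilon\otimes\id_Y)\circ(f\otimes\id_{X\otimes Y})\circ\eta.
\end{equation*}
Finally, bifunctoriality lets one factor the composite standing in front of $\eta$ as a tensor product,
\begin{equation*}
(\id_X\otimes\varepsilon\otimes\id_Y)\circ(f\otimes\id_{X\otimes Y})=\bigl((\id_X\otimes\varepsilon)\circ(f\otimes\id_X)\bigr)\otimes\id_Y,
\end{equation*}
and the bracketed factor is $\id_X$ by the hypothesis on $f$; hence this composite is $\id_{X\otimes Y}$ and we conclude $f=\eta$.

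The argument is essentially a bookkeeping exercise with the coherence isomorphisms, so I expect the main obstacle to be purely notational: keeping track of which copy of $X$ or $Y$ each identity and each evaluation acts on, and justifying both the sliding identity and the final factorization rigorously through the interchange law rather than by an informal appeal to string diagrams. Note that the symmetry $\sigma$ is never used, so the statement indeed holds in any monoidal category with unit, as claimed.
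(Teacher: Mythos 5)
Your proof is correct and is essentially the paper's own argument: your sliding identity $(\id_{X\otimes Y}\otimes\eta)\circ f=(f\otimes\id_{X\otimes Y})\circ\eta$ is exactly the commuting square (via the interchange law) in the paper's diagram, and your two collapsing steps --- identity (b) tensored with $\id_X$, and the hypothesis on $f$ tensored with $\id_Y$ --- are precisely the paper's two triangles. The only difference is presentational (an equational chase rather than a diagram), and your closing remarks are also consistent with the paper, which states the lemma for a monoidal category with unit and strictly identifies the associativity constraints.
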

\begin{proof}
Consider the diagram
\begin{equation*}
\xymatrix @C=3pc{
\ucat_\sC \ar[r]^-{\eta}\ar[d]_-{f}&X\otimes Y\ar[d]^-{f \otimes \id_X\otimes \id_Y }&\\
X \otimes Y \ar[r]_-{\id_X \otimes \id_Y \otimes \eta}& X\otimes Y \otimes X \otimes Y \ar[rd]|-{\id_X \otimes\varepsilon\otimes \id_Y}&\\
&& X\otimes Y.
}
\end{equation*}
By the hypothesis, $(\id_X\otimes\varepsilon\otimes \id_Y)\circ (f \otimes \id_X\otimes \id_Y)=\id_X \otimes \id_Y$ and 
$(\id_X \otimes \varepsilon \otimes \id_Y)\circ (\id_X\otimes \id_Y \otimes \eta)=(\id_X \otimes \id_Y)$. Therefore, $\eta=f$. 
\end{proof}

The next proposition is well known. But, we do not the original reference. A proof can be found in \cite[Chap.4]{categories_and_sheaves}.

\begin{proposition}
If (X,Y) is a dual pair, then for every $Z, \; W \in \Ob(\sC)$, there are natural isomorphisms
\begin{align*}
\Phi:\Hom_\sC(Z,W \otimes Y) \stackrel{\sim}{\to} \Hom_\sC(Z \otimes X, W),\\
\Psi:\Hom_\sC(Y \otimes Z , W) \stackrel{\sim}{\to} \Hom_\sC( Z , X \otimes W )
\end{align*}
 where for $f \in \Hom_\sC(Z,W \otimes Y)$ and $g \in \Hom_\sC(Y \otimes Z , W)$, 
\begin{align*}
\Phi(f)= (\id_W \otimes \varepsilon) \circ (f \otimes \id_X),\\  
\Psi(g)=(\id_X \otimes g) \circ ( \eta \otimes \id_Z).
\end{align*}
\end{proposition}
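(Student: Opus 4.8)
The plan is to exhibit explicit two-sided inverses for $\Phi$ and $\Psi$ and to verify that they undo each other using only the bifunctoriality of $\otimes$ (the interchange law $(a\otimes b)\circ(c\otimes d)=(a\circ c)\otimes(b\circ d)$) together with the two triangle identities (a) and (b) defining the dual pair. For $\Phi$ I would define the candidate inverse $\Phi^{-1}:\Hom_\sC(Z\otimes X,W)\to\Hom_\sC(Z,W\otimes Y)$ by $\Phi^{-1}(h)=(h\otimes\id_Y)\circ(\id_Z\otimes\eta)$, where $\id_Z\otimes\eta\colon Z\to Z\otimes X\otimes Y$ and $h\otimes\id_Y\colon Z\otimes X\otimes Y\to W\otimes Y$. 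Symmetrically, for $\Psi$ I would set $\Psi^{-1}(k)=(\varepsilon\otimes\id_W)\circ(\id_Y\otimes k)$ for $k\colon Z\to X\otimes W$.

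The heart of the argument lies in computing the two composites $\Phi\circ\Phi^{-1}$ and $\Phi^{-1}\circ\Phi$. Starting from $h\colon Z\otimes X\to W$, I would first use the interchange law to rewrite $(\id_W\otimes\varepsilon)\circ(h\otimes\id_{Y\otimes X})$ as $h\circ(\id_{Z\otimes X}\otimes\varepsilon)$, and then recognise that $(\id_{Z\otimes X}\otimes\varepsilon)\circ(\id_Z\otimes\eta\otimes\id_X)=\id_Z\otimes\big[(\id_X\otimes\varepsilon)\circ(\eta\otimes\id_X)\big]$, which equals $\id_{Z\otimes X}$ by condition (a); hence $\Phi(\Phi^{-1}(h))=h$. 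The reverse composite is handled the same way: starting from $f\colon Z\to W\otimes Y$, the interchange law collapses the middle of $\Phi^{-1}(\Phi(f))$ to $\id_W\otimes\big[(\varepsilon\otimes\id_Y)\circ(\id_Y\otimes\eta)\big]$, which is $\id_{W\otimes Y}$ by condition (b), so $\Phi^{-1}(\Phi(f))=f$. The computations for $\Psi$ and $\Psi^{-1}$ are formally identical after interchanging the roles played by the two triangle identities.

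Finally I would check naturality in both variables. Given $\alpha\colon Z'\to Z$ and $\beta\colon W\to W'$, the identities $\Phi(f\circ\alpha)=\Phi(f)\circ(\alpha\otimes\id_X)$ and $\Phi((\beta\otimes\id_Y)\circ f)=\beta\circ\Phi(f)$ are immediate from the associativity of composition and the functoriality of $\otimes$, and likewise for $\Psi$; together these give naturality simultaneously in $Z$ and in $W$.

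The step I expect to be most delicate is bookkeeping rather than conceptual: one must keep track of the unit and associativity coherence isomorphisms $\ucat_\sC\otimes X\simeq X\simeq X\otimes\ucat_\sC$ and $(X\otimes Y)\otimes Z\simeq X\otimes(Y\otimes Z)$ that are silently suppressed in the statement, so that the triangle identities (a) and (b) can be applied in precisely the tensor slots where they are needed. Since the paper has fixed the convention of identifying re-associated products, these coherences cause no genuine difficulty, but they are the one place where attention is required.
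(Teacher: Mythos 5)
Your proof is correct: the explicit inverses $\Phi^{-1}(h)=(h\otimes\id_Y)\circ(\id_Z\otimes\eta)$ and $\Psi^{-1}(k)=(\varepsilon\otimes\id_W)\circ(\id_Y\otimes k)$, together with the interchange law and the triangle identities (a) and (b), are exactly the standard argument, and your naturality check and your remark about the suppressed coherence isomorphisms are both sound. The paper itself gives no proof for this proposition --- it only cites \cite[Chap.4]{categories_and_sheaves} --- and your argument is essentially the one found in that reference, so there is nothing substantive to compare beyond noting agreement.
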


\begin{remark}\label{independance}
It follows that $Y$ is a representative of the functor $Z \mapsto \Hom_\sC(Z \otimes X, \ucat_\sC)$ as well as a representative of the functor
$W \mapsto \Hom_\sC(\ucat_\sC, X \otimes W)$. Therefore, the dual of a dualizable object is unique up to a unique isomorphism.
\end{remark}

\begin{definition}\label{trace}
For a dualizable object $X$, the trace of $f:X \to X$ denoted $\tr(f)$ is the composition
\begin{equation*}
\ucat_\sC \to X \otimes Y \stackrel{f \otimes \id}{\to} X \otimes Y \stackrel{\sigma}{\to} Y \otimes X \stackrel{\varepsilon}{\to} \ucat_\sC.
\end{equation*} 
Then, $\tr(f) \in \Hom_{\sC}(\ucat_\sC,\ucat_\sC)$.
\end{definition}

\begin{remark}
The trace could also by defined as the following composition
\begin{equation*}
\ucat_\sC \to X \otimes Y \stackrel{\sigma}{\to}  Y \otimes X  \stackrel{\id \otimes f}{\to}Y \otimes X \stackrel{\varepsilon}{\to} \ucat_\sC.
\end{equation*} 
These two definitions of the trace coincide because $(\id \otimes f)\sigma= \sigma (f \otimes \id) $ since $\sigma$ is a natural transformation.
\end{remark}
Recall the following fact. 
\begin{lemma}\label{invar}
With the notation of Definition \ref{trace}, the trace is independent of the choice of a dual for X.
\end{lemma}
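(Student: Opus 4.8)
The plan is to compare the trace computed with respect to an arbitrary second dual $(X,Y')$ to the one computed with $(X,Y)$, by transporting the computation along the canonical comparison isomorphism between $Y$ and $Y'$ supplied by Remark \ref{independance}.

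First I would fix a second dual pair $(X,Y')$ with coevaluation $\eta'\colon \ucat_\sC \to X \otimes Y'$ and evaluation $\varepsilon'\colon Y'\otimes X \to \ucat_\sC$, and introduce the comparison morphism
\[
\phi := (\varepsilon \otimes \id_{Y'}) \circ (\id_Y \otimes \eta')\colon Y \to Y',
\]
where I identify $Y \simeq Y \otimes \ucat_\sC$ and $\ucat_\sC \otimes Y' \simeq Y'$. The crucial point is that $\phi$ intertwines the two sets of structure morphisms, that is,
\[
(\id_X \otimes \phi)\circ \eta = \eta', \qquad \varepsilon' \circ (\phi \otimes \id_X) = \varepsilon.
\]
I would prove the second identity by a direct diagram chase: expanding $\phi \otimes \id_X$, commuting the two evaluations past one another by bifunctoriality of $\otimes$, and then collapsing the resulting expression by means of triangle identity (a) of the Definition applied to the pair $(X,Y')$. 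For the first identity it is cleanest to invoke Lemma \ref{kas}: setting $g := (\id_X \otimes \phi)\circ \eta$, the identity just established together with triangle identity (a) for $(X,Y)$ gives $(\id_X \otimes \varepsilon')\circ (g \otimes \id_X) = \id_X$, whence $g = \eta'$ by that lemma.

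With these two compatibilities in hand the conclusion is a short computation. Writing $\tr'(f)$ for the trace of $f$ computed with the dual $Y'$ as in Definition \ref{trace}, I would substitute $\eta' = (\id_X \otimes \phi)\circ \eta$, push $\phi$ across $f \otimes \id$ by bifunctoriality (so that $(f\otimes\id_{Y'})\circ(\id_X\otimes\phi)=(\id_X\otimes\phi)\circ(f\otimes\id_Y)$), move it through the symmetry by naturality of $\sigma$ (turning $\id_X \otimes \phi$ into $\phi \otimes \id_X$ via $\sigma_{X,Y'}\circ(\id_X\otimes\phi)=(\phi\otimes\id_X)\circ\sigma_{X,Y}$), and finally absorb it into $\varepsilon'$ using $\varepsilon' \circ (\phi \otimes \id_X) = \varepsilon$. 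What remains is exactly $\tr(f)$ computed with $Y$, which establishes the independence.

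The main obstacle is purely bookkeeping: keeping the associativity and unit identifications consistent throughout and applying the naturality square for $\sigma$ to the correct pair of objects. There is no conceptual difficulty, and I note that one need not even verify that $\phi$ is an isomorphism for this argument, since only the two intertwining relations are used; invertibility nonetheless follows from the symmetric computation with the roles of the two duals exchanged, in accordance with Remark \ref{independance}.
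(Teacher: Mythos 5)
Your proof is correct, and its skeleton coincides with the paper's: both arguments produce a comparison morphism $Y \to Y'$ satisfying the two intertwining relations $\varepsilon' \circ (\theta \otimes \id_X) = \varepsilon$ and $(\id_X \otimes \theta)\circ \eta = \eta'$, both obtain the coevaluation relation from Lemma \ref{kas}, and both conclude by the same chase (push the comparison map through $f \otimes \id$ by bifunctoriality and through $\sigma$ by naturality, then absorb it into $\varepsilon'$ --- this is precisely the paper's final commutative diagram). The genuine difference is how the comparison map and the evaluation relation are obtained: the paper invokes Remark \ref{independance}, i.e.\ uniqueness of a representative of the functor $Z \mapsto \Hom_\sC(Z \otimes X, \ucat_\sC)$, which yields an isomorphism $\theta$ together with $\varepsilon = \varepsilon' \circ (\theta \otimes \id_X)$ at once, whereas you write down the standard explicit morphism $\phi = (\varepsilon \otimes \id_{Y'})\circ(\id_Y \otimes \eta')$ and verify that relation by hand, using bifunctoriality to commute the two evaluations and then the triangle identity (a) for $(X,Y')$ --- a computation that does go through as you describe. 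By the uniqueness in Remark \ref{independance}, your $\phi$ is in fact the paper's $\theta$. Your route is more elementary and self-contained, avoiding the representability formalism, and it isolates a point the paper's proof obscures: only the two intertwining relations are used, so invertibility of the comparison map is never needed. The paper's route, in exchange, delivers the isomorphism property and the uniqueness of $\theta$ with no additional work. Your application of Lemma \ref{kas} to $g = (\id_X \otimes \phi)\circ\eta$, deducing $(\id_X \otimes \varepsilon')\circ(g \otimes \id_X) = \id_X$ from the evaluation relation and triangle identity (a) for $(X,Y)$, is sound, and the final substitution argument is exactly the commutativity check the paper performs diagrammatically.
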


\begin{proof}
Let $Y$ and $Y^\prime$ two duals of $X$ with evaluations $\varepsilon$, $\varepsilon^\prime$ and coevalution $\eta$ and $\eta^\prime$. By definition of a representative of the functor $Z \mapsto \Hom_\sC(Z \otimes X, \ucat_\sC)$ there exist a unique isomorphism $\theta: Y \to Y^\prime$ such that the diagram 
\begin{equation*}
\xymatrix{\Hom_\sC(Z,Y^\prime) \ar[r]^-{\Phi^\prime}& \Hom_\sC(Z \otimes X,\ucat_\sC)\\
\Hom_\sC(Z,Y) \ar[u]^{\theta \circ} \ar[ru]_-{\Phi}&.
} 
\end{equation*}
commutes. For $Z=Y$, the diagram, applied to $\id_Y$, implies $\varepsilon=\varepsilon^\prime \circ (\theta \otimes \id_X)$. Using Lemma \ref{kas}, we get that $\eta= (\id_X \otimes \theta^{-1}) \circ \eta^\prime$. It follows that the diagram

\begin{equation*}
\xymatrix{& X \otimes Y \ar[r]^-{f \otimes \id} \ar[dd]^-{\id  \otimes \theta} & X \otimes Y \ar[r]^-{\sigma} \ar[dd]^-{\id  \otimes \theta} & Y \otimes X \ar[dd]^{\theta \otimes \id} \ar[rd]^-{\varepsilon}&\\
\ucat_\sC \ar[ru]^-{\eta} \ar[rd]_-{\eta^\prime}&&&& \ucat_\sC \\
& X \otimes Y^\prime \ar[r]_-{f \otimes \id} & X \otimes Y^\prime \ar[r]_-{\sigma} &  Y^\prime \otimes X \ar[ru]_-{\varepsilon^\prime}&
}
\end{equation*}
commutes which proves the claim.
\end{proof} 

\begin{example}\label{example}(see \cite[Chap.3]{may})
Let $k$ be a Noetherian commutative ring of finite weak global dimension. Let $\Der^b(k)$ be the bounded derived category of the category of $k$-modules. It is a symmetric monoidal category for $\Lte_k$. We denote by $\Der^b_f(k)$, the full subcategory of $\Der^b(k)$ whose objects are the complexes with finite type cohomology. If $M \in \Ob(\Der^b_f(k))$, its dual is given by $\RHom_k(M,k)$. The evaluation and the coevaluation are given by
\begin{align*}
\mathrm{ev}: \RHom_k(M,k)& \Lte_k M \to k\\
\mathrm{coev}: k \to \RHom_k(M,M) \stackrel{\sim}{\leftarrow} & M \Lte_k \RHom_k(M,k).
\end{align*}
If we further assume that $k$ is an integral domain, then $k$ can be embedded into its field of fraction $\mathrm{F}(k)$. If $f$ is an endomorphism of $M$ then the trace of $f$
\begin{equation*}
k \stackrel{\mathrm{coev}}{\to} M \otimes \RHom_k(M,k) \stackrel{f \otimes \id}{\to} M \otimes \RHom_k(M,k) \stackrel{\sigma}{\to} \RHom_k(M,k) \otimes M \stackrel{\mathrm{ev}}{\to} k
\end{equation*}
coincides with $\sum_i (-1)^i \tr(\Hn^i(\id_{\mathrm{F}(k)} \otimes f))$. If $f=\id_M$, one sets 
\begin{equation*}
\chi(M)=\sum_{i \in \Z}(-1)^i \dim_{\mathrm{F}(k)}(\Hn^i(M)).
\end{equation*}\index{chi@$\chi$}
\end{example}

\subsection{The framework}\label{monoidalframework}
In this section, we define a general framework for Lefschetz-Lunts type theorems. Let $\mathscr{C}$ be a symmetric monoidal category with product $\otimes$, unit object $\mathbf{1}_{\mathscr{C}}$ and symmetry isomorphism $\sigma$. Let $k$ be a Noetherian commutative ring with finite cohomological dimension.

Assume we are given: 
\begin{enumerate}[(a)]
\item a monoidal functor $(\cdot)^a:\mathscr{C} \to \mathscr{C}$ such that $(\cdot)^a \circ (\cdot)^a = \id_\mathscr{C}$ and $\mathbf{1}_{\mathscr{C}}^a \simeq \mathbf{1}_{\mathscr{C}}$

\item a symmetric monoidal functor $(L,\mathfrak{K}): \mathscr{C} \to \Der^b(k)$ where $\mathfrak{K}$ is the isomorphism of bifunctor from  $L(\cdot) \Lte L(\cdot)$ to $L( \cdot \otimes \cdot)$. That is
$L(X) \Lte L(Y) \stackrel{\mathfrak{K}}{\simeq} L(X \otimes Y)$ naturally in $X$ and $Y$ and $L(\ucat_\sC)\simeq k$,

\item for $X_i \in \Ob(\mathscr{C})$ $(i=1, 2, 3)$, a morphism
\begin{equation*}
\underset{2}{\cup}: L(X_1 \otimes X_2^a) \Lte L(X_2 \otimes X_3^a) \to L(X_1 \otimes X_3^a),
\end{equation*}

\item for every $X \in \Ob(\mathscr{C})$, a morphism
\begin{equation*}
L_{\Delta_X}:k \to L(X \otimes X^a),
\end{equation*}
\end{enumerate}
these data verifying the following properties:

\begin{enumerate}[(P1)]
\item \label{compku} for $X_1, \; X_3 \in \Ob(\sC)$, the diagram
\begin{equation*}
\xymatrix{ L(X_1 \otimes \mathbf{1}_{\mathscr{C}}^a ) \Lte L(\mathbf{1}_{\mathscr{C}} \otimes X_3) \ar[r]^-{\underset{\mathbf{1}_{\mathscr{C}}}{\cup}} \ar[d]^-{\wr}&  L(X_1 \otimes X_3)\ar[d]^{\id}\\
L(X_1) \Lte L(X_3) \ar[r]^{\mathfrak{K}}& L(X_1 \otimes X_3)
}
\end{equation*}  
commutes,

\item \label{assol} for $X_1, \; X_2, \; X_3, \; X_4 \in \Ob(\mathscr{C})$, the diagram 
\begin{equation*}
\scalebox{0.93}{\xymatrix{
L(X_1 \otimes X_2^a) \Lte L(X_2 \otimes X_3^a) \Lte L(X_3 \otimes X_4^a) \ar[r]^-{\underset{2}{\cup} \otimes \id} \ar[d]_-{\id \otimes \underset{3}{\cup}}& L(X_1 \otimes X_3^a) \Lte L(X_3 \otimes X_4) \ar[d]^-{\underset{3}{\cup}}\\
L(X_1 \otimes X_2^a) \Lte L(X_2 \otimes X_4^a) \ar[r]^-{\underset{2}{\cup}}& L(X_1 \otimes X_4^a)
}}
\end{equation*}
commutes,

\item \label{viceversa} the diagram 

\begin{equation*}
\xymatrix{
k \ar[r]^-{L_{\Delta_X}} \ar[rd]_-{L_{\Delta_{X^a}}} & L(X \otimes X^a) \ar[d]^-{L(\sigma)} \\
& L(X^a \otimes X)
}
\end{equation*}
commutes,

\item \label{idcomp} the composition
\begin{equation*}
L(X) \stackrel{L_{\Delta_X} \otimes \id_{L(X)}}{\longrightarrow}  L(X \otimes X^a) \Lte L(X) \stackrel{\underset{X}{\cup}}{\to} L(X)
\end{equation*}
is the identity of $L(X)$ and the composition
\begin{equation*}
L(X^a) \stackrel{\id_{L(X^a)} \otimes L_{\Delta_X}}{\longrightarrow} L(X^a) \Lte L(X \otimes X^a) \stackrel{\underset{X^a}{\cup}}{\to} L(X^a) 
\end{equation*} 
is the identity of $L(X^a)$,

\item \label{pairingdiag} the diagram
\begin{equation*}
\xymatrix{ L(X \otimes X^a) \Lte L(X^a \otimes X )\ar[r]^-{\underset{X^a \otimes X}{\cup}}& k \\
 L(X^a) \Lte L(X) \ar[u]^{L_{\Delta_X} \otimes \mathfrak{K}} \ar[ru]_-{\underset{X}{\cup}}.
} 
\end{equation*}
commutes,

\item \label{switch} for $X_1$ and $X_2$ belonging to $\Ob(\sC)$, the diagram
\begin{equation*}
\xymatrix{ L((X_1 \otimes X_2)^a) \Lte L((X_1 \otimes X_2)) \ar[r]^-{\underset{X_1 \otimes X_2}{\cup}}& k \\
L((X_2 \otimes X_1)^a) \Lte L(X_2 \otimes X_1) \ar[ru]
_-{\underset{X_2 \otimes X_1}{\cup}} \ar[u]^-{L(\sigma) \otimes L(\sigma)}
}
\end{equation*}
commutes.
\end{enumerate}

\begin{lemma}\label{thisisadual}
The object $L(X^a)$ is a dual of $L(X)$ with coevalution $\eta:=\mathfrak{K}^{-1} \circ L_{\Delta_X}$ and evaluation $\varepsilon:=~\underset{X}{\cup}: L(X^a) \Lte L(X) \to k$.
\end{lemma}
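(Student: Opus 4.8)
The plan is to verify directly the two triangle identities (a) and (b) of the definition of a dual pair, for the candidate coevaluation $\eta=\mathfrak{K}^{-1}\circ L_{\Delta_X}$ (which is well defined because $\mathfrak{K}$ is an isomorphism by assumption (b)) and evaluation $\varepsilon=\underset{X}{\cup}$, the latter being $\underset{2}{\cup}$ specialised to $X_1=X_3=\mathbf 1_\sC$, $X_2=X$, read through the canonical identifications $\mathbf 1_\sC^a\simeq\mathbf 1_\sC$ and $L(\mathbf 1_\sC)\simeq k$. The key observation is that, once $\eta$ is expanded, each triangle identity reduces to one of the two identities of property (\ref{idcomp}), \emph{provided} one first proves a compatibility between the \emph{internal} cup products and $\mathfrak{K}$. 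Each such compatibility will be obtained by specialising the associativity property (\ref{assol}) so that one of the objects is $\mathbf 1_\sC$, and by then identifying the resulting cup over $\mathbf 1_\sC$ with $\mathfrak{K}$ by means of property (\ref{compku}).

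For identity (a), I would first rewrite the composite $k\Lte L(X)\xrightarrow{\eta\otimes\id}L(X)\Lte L(X^a)\Lte L(X)\xrightarrow{\id\otimes\varepsilon}L(X)\Lte k$ as $(\id_{L(X)}\otimes\varepsilon)\circ(\mathfrak{K}^{-1}\otimes\id_{L(X)})\circ(L_{\Delta_X}\otimes\id_{L(X)})$. It then suffices to show that $(\id_{L(X)}\otimes\varepsilon)\circ(\mathfrak{K}^{-1}\otimes\id_{L(X)})$ coincides with $\underset{2}{\cup}$ taken with $X_1=X$, $X_2=X$, $X_3=\mathbf 1_\sC$, after which the first identity of property (\ref{idcomp}) produces $\id_{L(X)}$ upon precomposition with $L_{\Delta_X}\otimes\id$. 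This compatibility is the commutativity of property (\ref{assol}) specialised to $(X_1,X_2,X_3,X_4)=(X,\mathbf 1_\sC,X,\mathbf 1_\sC)$: the source is $L(X)\Lte L(X^a)\Lte L(X)$, the two cups over $\mathbf 1_\sC$ become $\mathfrak{K}$ by property (\ref{compku}), while the two cups over $X$ give $\varepsilon$ and the internal product $\underset{2}{\cup}_{X,X,\mathbf 1_\sC}$; the resulting square is exactly the asserted identity, up to the unit constraint $L(X)\Lte k\simeq L(X)$.

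Identity (b) I would treat symmetrically. Writing $\id_{L(X^a)}\otimes\eta=(\id\otimes\mathfrak{K}^{-1})\circ(\id\otimes L_{\Delta_X})$, it reduces to showing that $(\varepsilon\otimes\id_{L(X^a)})\circ(\id_{L(X^a)}\otimes\mathfrak{K}^{-1})$ equals the cup $\underset{X^a}{\cup}=\underset{2}{\cup}_{\mathbf 1_\sC,X,X}$, after which the second identity of property (\ref{idcomp}) yields $\id_{L(X^a)}$. This second compatibility comes from property (\ref{assol}) specialised to $(X_1,X_2,X_3,X_4)=(\mathbf 1_\sC,X,\mathbf 1_\sC,X)$, the cups over $\mathbf 1_\sC$ being again identified with $\mathfrak{K}$ by property (\ref{compku}), up to the unit constraint $k\Lte L(X^a)\simeq L(X^a)$.

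I expect the only genuine difficulty to be bookkeeping rather than conceptual: one must carry the canonical isomorphisms $\mathbf 1_\sC^a\simeq\mathbf 1_\sC$, $L(\mathbf 1_\sC)\simeq k$ and the unit constraints of both $\otimes$ and $\Lte$ coherently through every specialisation, and check that the indices labelling the various instances of $\underset{2}{\cup}$ agree after these identifications. No further structure is required for this lemma: properties (\ref{viceversa}), (\ref{pairingdiag}) and (\ref{switch}), together with Lemma \ref{kas}, are not used here and are reserved for the subsequent trace computations.
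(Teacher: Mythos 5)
Your proposal is correct and takes essentially the same route as the paper: the paper also verifies the two triangle identities by splitting each into a square commuting by the definition of $\eta=\mathfrak{K}^{-1}\circ L_{\Delta_X}$ and a square that is property (P\ref{assol}) specialised with unit objects (the cups over $\ucat_\sC$ being $\mathfrak{K}$ by (P\ref{compku})), and then concludes with (P\ref{idcomp}). Your explicit specialisations $(X,\ucat_\sC,X,\ucat_\sC)$ and $(\ucat_\sC,X,\ucat_\sC,X)$, and your explicit appeal to (P\ref{compku}), merely spell out steps the paper leaves implicit.
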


\begin{proof}
Consider the diagram
\begin{equation*}
\xymatrix{
 L(X) \ar[r]^-{\eta \otimes \id}  \ar@{=}[d] &L(X) \Lte L(X^a) \Lte L(X) \ar[r]^-{\id \otimes \varepsilon} \ar[d]^-{\mathfrak{K} \otimes \id}& L(X) \ar@{=}[d]\\
L(X)\ar[r]_-{L_{\Delta_X} \otimes \id} & L(X \otimes X^a) \Lte L(X) \ar[r]_-{\underset{X^a}{\cup}}& L(X)
}
\end{equation*}

and the diagram

\begin{equation*}
\xymatrix{
 L(X^a) \ar[r]^-{\id \otimes \eta} \ar@{=}[d] &L(X^a) \Lte L(X) \Lte L(X^a) \ar[r]^-{\varepsilon \otimes \id} \ar[d]^-{\id \otimes \mathfrak{K}}& L(X^a) \ar@{=}[d]\\
L(X^a) \ar[r]_-{ \id \otimes L_{\Delta_{X^a}}}& L(X^a) \Lte L(X \otimes X^a) \ar[r]_-{\underset{X}{\cup}}& L(X^a).
}
\end{equation*}
These diagrams are made of two squares. The left squares commute by definition of $\eta$. The squares on the right commute because of the Property (P\ref{assol}). It follows that the two diagrams commute. Property (P\ref{idcomp}) implies that the bottom line of each diagram is equal to the identity. This proves the proposition.
\end{proof}

The preceding lemma shows that $L(X)$ is a dualizable object of $\Der^b(k)$. We set $L(X)^\star=\RHom_k(L(X),k)$. By Remark \ref{independance}, we have $L(X)^\star \simeq L(X^a)$.

Let $\lambda:k \to L(X \otimes X^a)$ be a morphism of $\Der^b(k)$. It defines a morphism
\begin{equation}\label{philambda}
\xymatrix{
\Phi_\lambda: L(X) \ar[r]^-{\lambda \Lte \id}& L(X \otimes X^a) \Lte L(X) \ar[r]^-{\underset{X}{\cup}}& L(X).\\
} 
\end{equation}

Consider the diagram

\begin{equation} \label{gros}
\xymatrix{%
 & L(X) \Lte L(X)^\star \ar[r]^-{\Phi_{\lambda} \otimes \id} &  L(X) \Lte L(X)^\star \ar[r]^-{\tau} & L(X)^\star \Lte L(X) \ar[rd]^-{\mathrm{ev}}&\\
k \ar[rd]_-{\eta}  \ar[ur]^-{\mathrm{coev}} &   &    &    & k\\
&L(X) \Lte L(X^a) \ar[r]^-{\Phi_{\lambda} \otimes \id}  & L(X) \Lte L(X^a) \ar[r]^-{\tau} & L(X^a) \Lte L(X) \ar[ru]_-{\varepsilon} &
}
\end{equation}

\begin{lemma} \label{maingros}
The diagram (\ref{gros}) commutes.
\end{lemma}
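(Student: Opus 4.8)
The plan is to recognise that the two horizontal composites in~(\ref{gros}) are nothing but two computations of the same trace $\tr(\Phi_\lambda)$, carried out with respect to two different duals of $L(X)$, and then to invoke the invariance of the trace under change of dual proved in Lemma~\ref{invar}. Since the only arrows connecting the two rows are the two maps out of the left copy of $k$ and the two maps into the right copy of $k$, the commutativity of~(\ref{gros}) is literally the statement that these two composites $k \to k$ agree.

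First I would match each row against Definition~\ref{trace}, letting the symmetry isomorphism $\tau$ of $\Der^b(k)$ play the role of $\sigma$. The top row starts with $\mathrm{coev}$, applies $\Phi_\lambda \otimes \id$, swaps by $\tau$, and ends with $\mathrm{ev}$; this is exactly $\tr(\Phi_\lambda)$ computed with the canonical dual $L(X)^\star = \RHom_k(L(X),k)$ equipped with the coevaluation $\mathrm{coev}$ and evaluation $\mathrm{ev}$ described in Example~\ref{example}. The bottom row starts with $\eta = \mathfrak{K}^{-1}\circ L_{\Delta_X}$, applies $\Phi_\lambda \otimes \id$, swaps by $\tau$, and ends with $\varepsilon = \underset{X}{\cup}$; by Lemma~\ref{thisisadual} these are precisely the coevaluation and evaluation making $L(X^a)$ a dual of $L(X)$, so the bottom row is $\tr(\Phi_\lambda)$ computed with the dual $L(X^a)$.

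With this identification in place, the proof is immediate: $L(X)$ is dualizable, and both $L(X)^\star$ and $L(X^a)$ are duals of it (the former by definition, the latter by Lemma~\ref{thisisadual}). Lemma~\ref{invar} then applies verbatim to the endomorphism $f = \Phi_\lambda$ and asserts that the trace does not depend on the chosen dual, giving the equality of the two composites and hence the commutativity of~(\ref{gros}).

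If one prefers an explicit verification instead of a citation, I would reproduce the argument of Lemma~\ref{invar}: let $\theta: L(X)^\star \xrightarrow{\sim} L(X^a)$ be the canonical isomorphism of duals provided by Remark~\ref{independance}, insert $\id_{L(X)} \otimes \theta$ and $\theta \otimes \id_{L(X)}$ as vertical maps between the rows, and cut~(\ref{gros}) into a left triangle, two central squares, and a right triangle. The central squares commute by bifunctoriality of $\Lte$ and naturality of $\tau$; the left triangle commutes because $\eta = (\id \otimes \theta)\circ \mathrm{coev}$ and the right triangle because $\mathrm{ev} = \varepsilon \circ (\theta \otimes \id)$. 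The one point I would treat as the main obstacle is checking that these two compatibilities hold for one and the same $\theta$, oriented consistently; but this is exactly the content of the relations $\varepsilon = \varepsilon' \circ (\theta \otimes \id_X)$ and $\eta = (\id_X \otimes \theta^{-1})\circ \eta'$ established in the proof of Lemma~\ref{invar}, so no genuinely new calculation is needed.
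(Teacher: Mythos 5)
Your proposal is correct and follows exactly the paper's own argument: identify the top row as $\tr(\Phi_\lambda)$ computed with the canonical dual $L(X)^\star$ and the bottom row as the same trace computed with the dual $L(X^a)$ furnished by Lemma~\ref{thisisadual}, then conclude by the independence of the trace from the choice of dual (Lemma~\ref{invar}). Your optional explicit verification merely unfolds the proof of Lemma~\ref{invar} and adds nothing beyond the cited argument, so there is no divergence from the paper.
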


\begin{proof}
By Lemma \ref{thisisadual}, $L(X^a)$ is a dual of $L(X)$ with evaluation morphism $\varepsilon$ and coevaluation morphism $\eta$. It follows from Lemma \ref{invar} that the diagram (\ref{gros}) commutes.
\end{proof}

We identify $\lambda$ and the image of $1_k$ by $\lambda$ and similarly for $L_{\Delta_X}$. From now on, we write indifferently $\cup$ as a morphism or as an operation, as for example in Theorem \ref{formuleconclu}.

\begin{theorem}\label{formuleconclu}
Assuming properties (P\ref{compku}) to (P\ref{pairingdiag}), we have the formula

\begin{equation*}
\tr(\Phi_\lambda)= L_{\Delta_{X}} \underset{X^a \otimes X}{\cup} L(\sigma)\lambda.
\end{equation*}
If we further assume Property (P\ref{switch}) we have the formula
\begin{equation*}
\tr(\Phi_\lambda)= L_{\Delta_{X^a}} \underset{X \otimes X^a}{\cup} \lambda.
\end{equation*}
\end{theorem}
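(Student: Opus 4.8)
The plan is to evaluate the trace on the concrete dual pair $(L(X),L(X^a))$ of Lemma~\ref{thisisadual} rather than on the abstract dual $L(X)^\star$. Since \eqref{gros} commutes (Lemma~\ref{maingros}) and the trace is independent of the chosen dual (Lemma~\ref{invar}), $\tr(\Phi_\lambda)$ equals the lower composite of \eqref{gros}, namely
\[
\tr(\Phi_\lambda)=\varepsilon\circ\sigma\circ(\Phi_\lambda\otimes\id_{L(X^a)})\circ\eta,\qquad \eta=\mathfrak{K}^{-1}\circ L_{\Delta_X},\ \ \varepsilon=\underset{X}{\cup}.
\]
Everything then hinges on identifying the \emph{name} $(\Phi_\lambda\otimes\id_{L(X^a)})\circ\eta$ of $\Phi_\lambda$ with $\mathfrak{K}^{-1}\circ\lambda$; once this is granted, both formulas come from pushing this class through $\varepsilon\circ\sigma$ and rewriting with $\mathfrak{K}$ and the structural morphisms.

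The crux is therefore the identity $(\Phi_\lambda\otimes\id_{L(X^a)})\circ\eta=\mathfrak{K}^{-1}\circ\lambda$. I would prove it through the natural isomorphism $\Phi$, applied in $\Der^b(k)$ to the dual pair $(L(X),L(X^a))$ (playing the roles of $X$ and $Y$) with $Z=k$ and $W=L(X)$, so that $\Phi(\psi)=(\id_{L(X)}\otimes\varepsilon)\circ(\psi\otimes\id_{L(X)})$ and its inverse sends $\phi$ to $(\phi\otimes\id_{L(X^a)})\circ\eta$ (a direct consequence of condition (a) of the dual pair). Thus it suffices to check $\Phi(\mathfrak{K}^{-1}\lambda)=\Phi_\lambda$, i.e., recalling $\Phi_\lambda=\underset{X}{\cup}\circ(\lambda\otimes\id_{L(X)})$ from \eqref{philambda}, that
\[
(\id_{L(X)}\otimes\underset{X}{\cup})\circ(\mathfrak{K}^{-1}\otimes\id_{L(X)})=\underset{X}{\cup}\colon L(X\otimes X^a)\Lte L(X)\to L(X).
\]
I expect this purely structural identity to be the main obstacle. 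I would establish it from associativity (P\ref{assol}) with $(X_1,X_2,X_3,X_4)=(X,\mathbf{1}_\sC,X,\mathbf{1}_\sC)$, which reads $\underset{X}{\cup}\circ(\underset{\mathbf{1}_\sC}{\cup}\otimes\id)=\underset{\mathbf{1}_\sC}{\cup}\circ(\id\otimes\underset{X}{\cup})$; using (P\ref{compku}) to replace both occurrences of $\underset{\mathbf{1}_\sC}{\cup}$ by $\mathfrak{K}$ (the second being the unit isomorphism) gives $\underset{X}{\cup}\circ(\mathfrak{K}\otimes\id)=\id_{L(X)}\otimes\underset{X}{\cup}$, and precomposing with $\mathfrak{K}^{-1}\otimes\id$ yields the claim.

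Granting the crux, the first formula follows quickly:
\[
\tr(\Phi_\lambda)=\underset{X}{\cup}\circ\sigma\circ\mathfrak{K}^{-1}\circ\lambda=\underset{X}{\cup}\circ\mathfrak{K}^{-1}\circ L(\sigma)\circ\lambda,
\]
where the second equality is the symmetric monoidality of $(L,\mathfrak{K})$, that is $\sigma\circ\mathfrak{K}^{-1}=\mathfrak{K}^{-1}\circ L(\sigma)$. Finally (P\ref{pairingdiag}) gives $\underset{X^a\otimes X}{\cup}\circ(L_{\Delta_X}\otimes\mathfrak{K})=\underset{X}{\cup}$; applying this to $\mu:=\mathfrak{K}^{-1}\circ L(\sigma)\circ\lambda\colon k\to L(X^a)\Lte L(X)$ and using $\mathfrak{K}\circ\mu=L(\sigma)\circ\lambda$ rewrites $\underset{X}{\cup}\circ\mu$ as $\underset{X^a\otimes X}{\cup}\bigl(L_{\Delta_X}\otimes L(\sigma)\lambda\bigr)$, which is exactly $L_{\Delta_X}\underset{X^a\otimes X}{\cup}L(\sigma)\lambda$.

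For the second formula I would feed the first one through (P\ref{switch}) and (P\ref{viceversa}). Taking $X_1=X$, $X_2=X^a$ in (P\ref{switch}) gives $\underset{X^a\otimes X}{\cup}=\underset{X\otimes X^a}{\cup}\circ(L(\sigma)\otimes L(\sigma))$; then $L(\sigma)\circ L_{\Delta_X}=L_{\Delta_{X^a}}$ by (P\ref{viceversa}), while $L(\sigma)\circ L(\sigma)=\id$ because $\sigma\circ\sigma=\id$, so that
\[
L_{\Delta_X}\underset{X^a\otimes X}{\cup}L(\sigma)\lambda=\underset{X\otimes X^a}{\cup}\bigl(L_{\Delta_{X^a}}\otimes\lambda\bigr)=L_{\Delta_{X^a}}\underset{X\otimes X^a}{\cup}\lambda .
\]
This last step is routine once the first formula is in place.
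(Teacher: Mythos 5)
Your proof is correct, and although it shares the paper's overall skeleton---evaluating the trace on the concrete dual pair of Lemma~\ref{thisisadual} via Lemma~\ref{maingros} and Lemma~\ref{invar}, then concluding with (P\ref{pairingdiag}), and with (P\ref{switch}) plus (P\ref{viceversa}) for the second formula---the central computation is organized along a genuinely different route. The paper proceeds by an explicit diagram chase: it unfolds $\Phi_\lambda$ inside the trace (diagram (\ref{inter1})), then assembles the four-panel diagram (\ref{inter2}), whose panels commute by the definition of $\eta$, by (P\ref{compku}) combined with (P\ref{assol}), by symmetric monoidality of $(L,\mathfrak{K})$, and by (P\ref{pairingdiag}), and finally cancels $L_{\Delta_X}$ by a direct appeal to (P\ref{idcomp}). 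You instead compute the \emph{name} of $\Phi_\lambda$, proving $(\Phi_\lambda\otimes\id)\circ\eta=\mathfrak{K}^{-1}\circ\lambda$ through the representability isomorphism $\Phi$, which reduces everything to the structural identity $\underset{X}{\cup}\circ(\mathfrak{K}\otimes\id)=\id_{L(X)}\otimes\underset{X}{\cup}$; your derivation of that identity from (P\ref{assol}) with $(X_1,X_2,X_3,X_4)=(X,\ucat_\sC,X,\ucat_\sC)$ and (P\ref{compku}) is exactly right, and it is in fact the same identity that makes the right-hand squares in the proof of Lemma~\ref{thisisadual} commute. Your route buys a small but real gain in tightness: the paper's final cancellation applies (P\ref{idcomp}) to the right action of $L_{\Delta_X}$ on $L(X\otimes X^a)$, which is a mild extension of the axiom as literally stated (it is stated only for $L(X)$ and $L(X^a)$), whereas your adjunction argument invokes (P\ref{idcomp}) only through Lemma~\ref{thisisadual}, i.e.\ in its stated form; the cost is the extra categorical machinery of $\Phi$ and its inverse $\phi\mapsto(\phi\otimes\id)\circ\eta$, which you justify correctly from condition (a) of the dual pair. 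Two cosmetic points: your $\sigma$ in the trace composite is the symmetry of $\Der^b(k)$ (the paper's $\tau$ in diagram (\ref{gros})), not $L(\sigma)$, and the interchange $\tau\circ\mathfrak{K}^{-1}=\mathfrak{K}^{-1}\circ L(\sigma)$ you use is precisely the symmetric monoidality invoked in the paper's panel~3; and, like the paper, you silently suppress unit coherences (e.g.\ that $\mathfrak{K}\colon L(X)\Lte L(\ucat_\sC)\to L(X\otimes\ucat_\sC)$ is the unitor under $L(\ucat_\sC)\simeq k$), which matches the paper's own level of rigor.
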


\begin{proof}
By definition of $\Phi_\lambda$, the diagram
\begin{small}
\begin{equation}\label{inter1}
\xymatrix @C=1pc{
& L(X)\Lte L(X^a) \ar[r]^-{\Phi_{\lambda} \otimes \id}  & L(X) \Lte L(X^a) \ar[r] & L(X^a) \Lte L(X) \ar[rd]^-{\varepsilon}&\\
k \ar[ru]^-{\eta} \ar[rd]_-{\lambda \otimes \eta} &&&&k\\
& L(X \otimes X^a) \Lte L(X) \Lte L(X^a) \ar[r]^-{\underset{X}{\cup} \otimes \id} & L(X) \Lte L(X^a) \ar[r] & L(X^a) \Lte L(X) \ar[ru]_-{\varepsilon}&.
}
\end{equation}
\end{small}
commutes.

Thus, computing the trace of $\Phi_\lambda$ is equivalent to compute the lower part of diagram (\ref{inter1}).

We denote by $\zeta$ the map
\begin{equation*}
\zeta: L(X^a \otimes X) \simeq k \Lte L(X^a \otimes X) \stackrel{L_{\Delta_X} \otimes \id}{\to} L(X \otimes X^a) \Lte L(X^a \otimes X) \stackrel{\underset{X^a \otimes X }{\cup}}{\to} k.
\end{equation*}
 
Consider the diagram
\begin{equation} \label{inter2}
\xymatrix{
&k \ar[ld]_-{\lambda \otimes \eta} \ar[rd]^-{\lambda \otimes L_{\Delta_X}} \ar @{}[d]|{1}&\\
L(X \otimes X^a) \Lte L(X) \Lte L(X^a) \ar[rr]^{ \id \otimes \mathfrak{K}} \ar[d]_-{\underset{X}{\cup} \Lte \id}&\ar @{}[d]|{2}& L(X \otimes X^a) \Lte L(X \otimes X^a) \ar[d]^{\underset{X}{\cup}} \\
L(X) \Lte  L(X^a) \ar[d] \ar[rr]^-{\mathfrak{K}}&\ar @{}[d]|{3}& L(X \otimes X^a) \ar[d]^-{L(\sigma)}\\
L(X^a) \Lte L(X) \ar[rr]^{\mathfrak{K}} \ar[rd]_-{\underset{X}{\cup}} &\ar @{}[d]|{4}& L(X^a \otimes X) \ar[ld]^{\zeta}\\
&k&. 
}
\end{equation}
This diagram is made of four sub-diagrams numbered from 1 to 4.
\begin{enumerate}
\item The sub-diagram 1 commutes by definition of $\eta$,
\item notice that $\mathfrak{K}= \underset{\ucat_\sC}{\cup}$ by the Property (P\ref{compku}). Then the sub-diagram 2 commutes by the Property (P\ref{assol}),
\item the sub-diagram 3 commutes because $L$ is a symmetric monoidal functor,
\item the sub-diagram 4 is the diagram of Property (P\ref{pairingdiag}).
\end{enumerate}

Applying Property (P\ref{idcomp}), we find that the right side of the diagram (\ref{inter2}) is equal to $L_{\Delta_X} \underset{X^a \otimes X}{\cup} L(\sigma)\lambda$.

By the Property (P\ref{switch}), $L_{\Delta_X} \underset{X^a \otimes X}{\cup} L(\sigma)\lambda= L(\sigma)L_{\Delta_X} \underset{X \otimes X^a}{\cup} \lambda$ and by  the Property (P\ref{viceversa}), $L(\sigma)L_{\Delta_X}=L_{\Delta_{X^a}}$, the result follows.
\end{proof}

\section{A short review on DQ-modules}

Deformation quantization modules have been introduced in \cite{Kos} and systematically studied in~\cite{KS3}.
We shall first recall here the main features of this theory, following the notation of loc.\ cit.

In all this paper, a manifold means a complex analytic manifold.
We denote by $\C^\hbar$ the ring $\C[[\hbar]]$. A Deformation Quantization algebroid stack (DQ-algebroid for short) on a complex manifold X with structure sheaf $\cO_X$, is a stack of $\C^\hbar$-algebras locally isomorphic to a star algebra $(\cO_X[[\hbar]],\star)$. If $\cA_X$ is a DQ-algebroid on a manifold $X$ then the opposite DQ-algebroid $\cA_X^{\textnormal{op}}$ is denoted by $\cA_{X^a}$. The diagonal embedding is denoted by $\delta_X: X \to X \times X$. 

If $X$ and $Y$ are two manifolds endowed with DQ-algebroids $\cA_X$ and $\cA_Y$, then $X \times Y$ is canonically endowed with the DQ-algebroid $\cA_{X \times Y}:=\cA_X \underline{\boxtimes} \cA_Y$ (see \cite[§2.3]{KS3}).
Following \cite[§2.3]{KS3}, we denote by $\cdot \boxtimes \cdot$ is the exterior product and by $\cdot \ubtimes \cdot$\index{1ub@$\cdot \ubtimes \cdot$} the bifunctor $\cA_{X \times Y} \underset{\cA_X \boxtimes \cA_Y}{\otimes} ( \cdot \boxtimes \cdot)$:
\begin{equation*}
\cdot \ubtimes \cdot : \Mod(\cA_X) \times \Mod(\cA_Y) \to \Mod(\cA_{X \times Y}).
\end{equation*}
We write $\cdot \ldetens \cdot$ for the corresponding derived bifunctor.

We write $\cC_X$\index{C@$\cC_X$} for the $\cA_{X \times X^a}$-module $\delta_{X \ast} \cA_X$ and $\omega_X \in \Mod_{\coh}(\cA_{X \times X^a})$\index{omegadualdq@$\omega_X$}  for the dualizing complex of DQ-modules. We denote by $\Du_{\cA_X}^\prime$ the duality functor of $\cA_X$-modules:
\begin{equation*}
\Du_{\cA_X}^\prime(\cdot):=\fRHom_{\cA_X}( \cdot,\cA_X).
\end{equation*}\index{dualb@$\mathbb{D}^\prime$}

Consider complex manifolds $X_i$ endowed with 
DQ-algebroids  $\A[X_i]$ ($i=1,2,\dots$).
\begin{Notation}\label{not:www1}
\begin{enumerate}[(i)]
\item
Consider a product of manifolds $X_1\times X_2 \times X_3$, we write it $X_{123}$. We denote by
$p_i$ the $i$-th projection and by $p_{ij}$ the $(i,j)$-th projection
({\em e.g.,} $p_{13}$ is the projection from 
$X_1\times X_1^a\times X_2$ to $X_1\times X_2$). 
We use similar notation for a product of four manifolds. 
\item
We write $\A[i]$ and $\A[ij^a]$
instead of $\A[X_i]$ and $\A[X_i\times X_j^a]$  and
similarly with other products. We use the same notations 
for $\dA[_i]$. 
\item
When there is no risk of confusion, we do note write the symbols 
$p_{i}^{-1}$ and similarly with $i$ replaced with $ij$, etc.\ 

\item If $\cK_1$ is an object of $\Der^b(\C^\hbar_{12})$ and $\cK_2$ is an object of $\Der^b(\C^\hbar_{23})$, we write $\cK_1 \underset{2}{\circ} \cK_2$ for $\dR p_{13!}(p_{12}^{-1} \cK_1 \Lte_{\C^\hbar_{123}} p_{23} ^{-1} \cK_2)$\index{1comp@$\circ$}.

\item We write $\Lte$ for the tensor product over $\C^\hbar$.
\end{enumerate}
\end{Notation}%

\subsection{Hochschild homology}

Let $X$ be a complex manifold endowed with a DQ-algebroid $\cA_X$. Recall that its 
 Hochschild homology is defined by

\begin{equation*}
\Hoc(\cA_X):= \delta_X^{-1}(\mathcal{C}_{X^a} \Lte_{\cA_{X \times X^a}} \mathcal{C}_X) \in \Der^b(\C_X^\hbar).
\end{equation*}\index{hochschildhomologie@$\Hoc$}
We denote by $\bHH(\cA_X)$ the object $\Rg(X,\Hoc(\cA_X))$ of the category $\Der^b(\C^\hbar)$ and by $\HH_0(\cA_X)$ the $\C^{\hbar}$-module   $\Hn^{0}(\bHH(\cA_X))$. We also set the notation, for a closed subset $\Lambda$ of $X$, $\Hoc_\Lambda(\cA_X):=\sGamma_\Lambda \Hoc(\cA_X)$ and $\HH_{0,\Lambda}(\cA_X)=\Hn^0(\Rg_\Lambda(X;\Hoc(\cA_X)))$.

\begin{proposition}
There is a natural isomorphism

\begin{equation}\label{isohocfais}
\Hoc(\cA_X)\simeq \fRHom_{\cA_{X \times X^a}} (\w_X^{-1}, \mathcal{C}_{X}).
\end{equation}
\end{proposition}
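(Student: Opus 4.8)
The plan is to reduce the statement to the standard interplay between the derived tensor product and $\fRHom$ for a perfect module, the relevant perfect module being the quantized diagonal $\mathcal{C}_{X^a}$, and then to identify its dual with $\w_X^{-1}$. As a first ingredient I would record the duality identity that, for any coherent $\cA_{X\times X^a}$-module $M$ of finite projective dimension (so that $M$ is perfect and biduality $\Du^\prime_{\cA_{X\times X^a}}\Du^\prime_{\cA_{X\times X^a}}M\simeq M$ holds) and any $N\in\Der^b(\cA_{X\times X^a})$, there is a natural isomorphism $M\Lte_{\cA_{X\times X^a}}N\simeq\fRHom_{\cA_{X\times X^a}}(\Du^\prime_{\cA_{X\times X^a}}M,N)$, where $\Du^\prime_{\cA_{X\times X^a}}M=\fRHom_{\cA_{X\times X^a}}(M,\cA_{X\times X^a})$. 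This is proved locally: both sides are exact functors of $M$ and $N$, they agree when $M=\cA_{X\times X^a}$, and a local finite free resolution of $M$ propagates the isomorphism.

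Second, I would apply this identity with $M=\mathcal{C}_{X^a}=\delta_{X\ast}\cA_{X^a}$ and $N=\mathcal{C}_X$. Here $\mathcal{C}_{X^a}$ is coherent over $\cA_{X\times X^a}$ and, locally, admits a finite free resolution of Koszul type, hence is perfect; this is where the smoothness of $X$ is used. One then obtains $\mathcal{C}_{X^a}\Lte_{\cA_{X\times X^a}}\mathcal{C}_X\simeq\fRHom_{\cA_{X\times X^a}}(\Du^\prime_{\cA_{X\times X^a}}\mathcal{C}_{X^a},\mathcal{C}_X)$, and it remains to identify $\Du^\prime_{\cA_{X\times X^a}}\mathcal{C}_{X^a}$ with $\w_X^{-1}$. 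This is the key computation: in the commutative model $\cA_X=\cO_X$ one has $\fRHom_{\cO_{X\times X}}(\cO_\Delta,\cO_{X\times X})\simeq\delta_{X\ast}(\bigwedge^{d}T_X[-d])$, the inverse dualizing complex supported on the diagonal, and the DQ-statement $\Du^\prime_{\cA_{X\times X^a}}\mathcal{C}_{X^a}\simeq\w_X^{-1}$ is the quantization of this; it follows from the local Koszul resolution together with the definition of the dualizing complex $\w_X$.

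Finally, I would conclude by applying $\delta_X^{-1}$. Since $\w_X^{-1}$ and $\mathcal{C}_X$ are both supported on the diagonal $\delta_X(X)$, the object $\fRHom_{\cA_{X\times X^a}}(\w_X^{-1},\mathcal{C}_X)$ is supported there as well, so $\delta_X^{-1}$ is inverse to $\delta_{X\ast}$ on it and the right-hand side is canonically an object of $\Der^b(\C_X^\hbar)$. Composing the two isomorphisms yields $\Hoc(\cA_X)=\delta_X^{-1}(\mathcal{C}_{X^a}\Lte_{\cA_{X\times X^a}}\mathcal{C}_X)\simeq\fRHom_{\cA_{X\times X^a}}(\w_X^{-1},\mathcal{C}_X)$.

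I expect the main obstacle to be the second ingredient, namely verifying that $\mathcal{C}_{X^a}$ is perfect over $\cA_{X\times X^a}$ and, above all, the precise identification $\Du^\prime_{\cA_{X\times X^a}}\mathcal{C}_{X^a}\simeq\w_X^{-1}$ with the correct shift and the correct choice of which diagonal ($X$ versus $X^a$) is dualized. This rests on the explicit local structure of $\mathcal{C}_X$ and on the definition of $\w_X$ from \cite{KS3}, and the bookkeeping associated with the opposite algebroid $\cA_{X^a}$ must be handled with care.
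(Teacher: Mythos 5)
Your proposal is correct and follows essentially the same route as the paper's source: the paper itself gives no argument beyond citing \cite[\S 4.1]{KS3}, and the proof there is exactly your reduction, namely the natural morphism $\cC_{X^a} \Lte_{\cA_{X \times X^a}} \cC_X \to \fRHom_{\cA_{X \times X^a}}(\Du^\prime_{\cA_{X \times X^a}}\cC_{X^a}, \cC_X)$, shown to be an isomorphism by local finite free (quantized Koszul) resolutions of the diagonal bimodule, combined with the identification of $\Du^\prime_{\cA_{X \times X^a}}\cC_{X^a}$ with $\w_X^{-1}$, which in \cite{KS3} is essentially built into the construction of the dualizing kernels in \S 3.3. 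The side/swap bookkeeping you flag as the main obstacle is indeed where the care is needed, and it is handled in \cite{KS3} exactly along the lines you indicate.
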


\begin{proof}
See \cite[§4.1, p.103]{KS3}.
\end{proof}

\begin{remark}
There is also a natural isomorphism
\begin{equation*}
\Hoc(\cA_X)\simeq \fRHom_{\cA_{X \times X^a}} (\cC_X, \w_X).
\end{equation*}
It can be obtain from the isomorphism (\ref{isohocfais}) by adjunction.
\end{remark}

\begin{proposition}[Künneth isomorphism] \label{kunisodq}
Let $X_i$ $(i=1, 2)$ be complexe manifolds endowed with DQ-algebroids $\cA_i$. 
\begin{enumerate}[(i)]
\item There is a natural morphism
\begin{equation} \label{kunnethorigin}
\fRHom_{\cA_{11^a}}(\w_1^{-1}, \cC_1) \overset{\mathrm{L}}{\boxtimes} \fRHom_{\cA_{22^a}}(\w_2^{-1}, \cC_2) \to \fRHom_{\cA_{121^a2^a}}(\w^{-1}_{12},\cC_{12}).
\end{equation}
\item If $X_1$ or $X_2$ is compact, this morphism induces a natural isomorphism
\begin{equation} \label{box morphism}
\mathfrak{K}: \bHH(\cA_1) \Lte  \bHH(\cA_2) \stackrel{\sim}{\to} \bHH(\cA_{12}).
\end{equation}\index{kunneth@$\mathfrak{K}$}
\end{enumerate}
\end{proposition}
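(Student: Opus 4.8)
The plan is to reduce both parts to the description (\ref{isohocfais}) of Hochschild homology as an $\fRHom$, and to separate the purely sheaf-theoretic content (part (i)) from the global statement (part (ii)), the latter being where the compactness hypothesis enters.

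For (i), I would obtain the morphism from the standard external-product morphism for $\fRHom$. In the DQ setting one has, for $\cA_{11^a}$-modules $M_1,N_1$ and $\cA_{22^a}$-modules $M_2,N_2$, a canonical morphism
\[
\fRHom_{\cA_{11^a}}(M_1,N_1)\overset{\mathrm{L}}{\boxtimes}\fRHom_{\cA_{22^a}}(M_2,N_2)\to\fRHom_{\cA_{11^a}\ubtimes\cA_{22^a}}(M_1\ldetens M_2,\,N_1\ldetens N_2),
\]
obtained by adjunction from the derived evaluation pairings. Specializing to $M_i=\w_i^{-1}$, $N_i=\cC_i$ and rewriting the target by means of three identifications from \cite{KS3} --- the external product of dualizing complexes $\w_1^{-1}\ldetens\w_2^{-1}\simeq\w_{12}^{-1}$, the external product of diagonal kernels $\cC_1\ldetens\cC_2\simeq\cC_{12}$ (both compatible with $\delta_{X_1}\times\delta_{X_2}$), and the reordering isomorphism $\cA_{11^a}\ubtimes\cA_{22^a}\simeq\cA_{121^a2^a}$ coming from the permutation of the two middle factors of $X_1\times X_1^a\times X_2\times X_2^a$ --- yields (\ref{kunnethorigin}). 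I would moreover argue that this morphism is already an isomorphism of complexes of sheaves, with no compactness hypothesis: $\cC_i$ and $\w_i^{-1}$ are coherent over the relevant algebroid and $\w_i^{-1}$ is even invertible for $\ubtimes$, and for such arguments the external-product morphism for $\fRHom$ is an isomorphism.

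For (ii), I would apply $\Rg(X_{12};-)$ to (\ref{kunnethorigin}) and precompose with the canonical K\"unneth morphism for derived global sections. Writing $F_i:=\Hoc(\cA_i)$ and $q_i:X_1\times X_2\to X_i$ for the two projections, so that $F_1\overset{\mathrm{L}}{\boxtimes}F_2=q_1^{-1}F_1\Lte q_2^{-1}F_2$, the arrow $\mathfrak{K}$ is the composite
\[
\Rg(X_1;F_1)\Lte\Rg(X_2;F_2)\to\Rg(X_{12};F_1\overset{\mathrm{L}}{\boxtimes}F_2)\stackrel{\sim}{\to}\Rg(X_{12};\Hoc(\cA_{12}))=\bHH(\cA_{12}),
\]
the second arrow being $\Rg$ of the isomorphism of (i). It then remains to show that the first arrow is an isomorphism. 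Assuming, say, $X_1$ compact, the projection $q_2$ is proper, so $\dR q_{2*}=\dR q_{2!}$ and the projection formula gives $\dR q_{2*}(q_1^{-1}F_1\Lte q_2^{-1}F_2)\simeq(\dR q_{2*}q_1^{-1}F_1)\Lte F_2$; proper base change over $X_1\to\pt$ identifies $\dR q_{2*}q_1^{-1}F_1$ with the constant complex of value $\Rg(X_1;F_1)$, and applying $\Rg(X_2;-)$ reduces the assertion to the commutation $\Rg(X_2;\Rg(X_1;F_1)\Lte F_2)\simeq\Rg(X_1;F_1)\Lte\Rg(X_2;F_2)$.

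The hard part is precisely this last commutation, that is, the genuinely $\C^\hbar$-linear content of the K\"unneth formula: over the non-field ring $\C^\hbar$ one may not pull a derived tensor factor out of $\Rg$ without a finiteness input. The resolution is again coherence: since $\cC_X$ and $\w_X^{-1}$ are coherent over $\cA_{X\times X^a}$, the object $\Hoc(\cA_X)$ has coherent cohomology, and on a compact manifold $\bHH(\cA_X)$ lies in $\Der^b_f(\C^\hbar)$. This finiteness, together with the finite cohomological dimension of $\C^\hbar$, is exactly what makes the projection-formula and base-change steps legitimate and allows $\Lte$ to commute with $\Rg$; it is also the place where the compactness of one factor is indispensable, both to ensure properness of $q_2$ and to guarantee the finiteness of $\bHH$. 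I would finish by checking that these standard functorial isomorphisms are compatible with the identifications used in (i), so that the composite $\mathfrak{K}$ is indeed an isomorphism.
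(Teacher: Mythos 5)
Your construction in (i) and your handling of the first arrow in (ii) (properness of $q_2$, projection formula, and pulling the perfect complex $\Rg(X_1;\Hoc(\cA_1))\in\Der^b_f(\C^\hbar)$ out of $\Rg(X_2;\cdot)$, which is legitimate since $\C^\hbar$ is a discrete valuation ring) are fine. The genuine gap is your claim that the morphism (\ref{kunnethorigin}) is already an isomorphism of sheaves with no compactness hypothesis: this is false, and your proof of (ii) uses it essentially, since your second arrow is $\Rg$ applied to this alleged isomorphism. The point is that the target of (\ref{kunnethorigin}) lives over the \emph{completed} product: $\cA_{12}=\cA_1\ubtimes\cA_2$ is locally $\cO_{X_{12}}[[\hbar]]$, a completion of $\cO_{X_1}[[\hbar]]\boxtimes\cO_{X_2}[[\hbar]]$ both analytically and $\hbar$-adically, and neither completion is seen by the external product $\ldetens$ over $\C^\hbar$ on the source. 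The failure occurs already for trivial commutative deformations: applying $\gr_\hbar$ and Hochschild--Kostant--Rosenberg, the map in question contains the canonical morphism
\begin{equation*}
q_1^{-1}\Omega^1_{X_1}\otimes_{\C}q_2^{-1}\cO_{X_2}\,\oplus\,q_1^{-1}\cO_{X_1}\otimes_{\C}q_2^{-1}\Omega^1_{X_2}\longrightarrow \Omega^1_{X_1\times X_2},
\end{equation*}
which is not an isomorphism of sheaves (stalks of the target carry $\cO_{X_1\times X_2}$-coefficients, not $\cO_{X_1}\otimes_\C\cO_{X_2}$-coefficients). Coherence of $\w_i^{-1}$ and $\cC_i$ does not rescue this, because the standard statement that the external-product morphism for $\fRHom$ is invertible for coherent arguments concerns modules over one fixed ring, whereas here the two sides are modules over genuinely different rings ($\cA_1\boxtimes\cA_2$ versus its completion $\cA_{12}$).

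Consequently, after your (correct) reduction, the remaining assertion --- that $\Rg(X_{12};\Hoc(\cA_1)\ldetens\Hoc(\cA_2))\to\Rg(X_{12};\Hoc(\cA_{12}))$ becomes invertible --- is precisely the hard part, and compactness must enter twice more than you allow: once to absorb the analytic completion (this is the classical global K\"unneth isomorphism for Hochschild homology of complex manifolds, which itself needs a compact factor and Cartan--Serre finiteness), and once for the $\hbar$-adic completion. The paper's proof short-circuits both at the level of $\bHH$: by \cite[Propositions 1.5.10 and 1.5.12]{KS3} the modules $\bHH(\cA_1)$, $\bHH(\cA_2)$, $\bHH(\cA_{12})$ are cohomologically complete; compactness of $X_1$ gives $\bHH(\cA_1)\in\Der^b_f(\C^\hbar)$, whence $\bHH(\cA_1)\Lte\bHH(\cA_2)$ is again cohomologically complete by \cite[Proposition 1.6.5]{KS3}; applying $\gr_\hbar$, which is conservative on cohomologically complete objects, turns (\ref{box morphism}) into the known classical K\"unneth isomorphism, so (\ref{box morphism}) is an isomorphism. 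To repair your outline, replace the sheaf-level isomorphism claim by this $\gr_\hbar$/cohomological-completeness argument (or prove the analytic global K\"unneth directly); as written, your second arrow is unjustified and the proof of (ii) fails.
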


\begin{proof}
(i) is clear. 

(ii) By \cite[Proposition 1.5.10]{KS3} and \cite[Proposition 1.5.12]{KS3}, the modules $\bHH(\cA_{i})$ for $(i=1, 2)$ and $\bHH(\cA_{12})$ are cohomologically complete.
If $X_1$ is compact, then the $\C^{\hbar}$-module $\bHH(\cA_{1})$ belongs to $\Der^b_{f}(\C^\hbar)$. Thus, the $\C^\hbar$-module $\bHH(\cA_{1}) \Lte_{\C^{\hbar}} \bHH(\cA_{2})$ is still a cohomologically complete module (see \cite[Proposition 1.6.5]{KS3}).

Applying the functor $\gr_{\hbar}$ to the morphism (\ref{box morphism}), we obtain the usual Künneth isomorphism for Hochschild homology of complex manifolds. Since $\gr_\hbar$ is a conservative functor on the category of cohomologically complete modules, the morphism (\ref{box morphism}) is an isomorphism.
\end{proof}

\subsection{Composition of Hochschild homology}\label{section_pairing}

Let $\Lambda_{ij} \;(i=1,2, j=i+1)$ be a closed subset of $X_{ij}$ and consider the hypothesis 

\begin{equation}\label{hypo}
p_{13} \mbox{ is proper on } \Lambda_{12} \times_{X_2} \Lambda_{23}.
\end{equation}

We also set $\Lambda_{12} \circ \Lambda_{23}= p_{13}(p_{12}^{-1} \Lambda_{12} \cap p_{23}^{-1} \Lambda_{23})$. 

\vspace{0.2cm}
\noindent Recall Proposition 4.2.1 of \cite{KS3}.

\begin{proposition}\label{pairing homology}
 Let $\Lambda_{ij}\; (i=1,2 \; j=i+1)$ satisfying (\ref{hypo}). There is a morphism
 \begin{equation} \label{pairingsheaves}
 \Hoc(\cA_{12^a}) \underset{2}{\circ}  \Hoc(\cA_{23^a}) \to \Hoc(\cA_{13^a}).
 \end{equation}
which induces a composition morphism for global sections
\begin{equation}\label{pairing}
\underset{2}{\cup}: \bHH_{\Lambda_{12}}(\cA_{{12^a}}) \Lte \bHH_{\Lambda_{23}}(\cA_{{23^a}}) \to \bHH_{\Lambda_{12} \circ \Lambda_{23}}(\cA_{{13^a}}).
\end{equation}\index{1comp@$\circ$}
\end{proposition}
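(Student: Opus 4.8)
The plan is to obtain the sheaf morphism (\ref{pairingsheaves}) as the composite of a Künneth (external product) morphism with a contraction along the middle factor $X_2$, and then to deduce the composition (\ref{pairing}) on global sections by taking sections with support and invoking the properness hypothesis (\ref{hypo}). First I would unwind the convolution: by Notation~\ref{not:www1}~(iv),
\[
\Hoc(\cA_{12^a})\conv[2]\Hoc(\cA_{23^a})=\reim{p_{13}}\bigl(p_{12}^{-1}\Hoc(\cA_{12^a})\Lte p_{23}^{-1}\Hoc(\cA_{23^a})\bigr),
\]
and, writing $i\colon X_{123}\hookrightarrow X_1\times X_2\times X_2\times X_3$ for the embedding $(x_1,x_2,x_3)\mapsto(x_1,x_2,x_2,x_3)$ of the partial diagonal of the two copies of $X_2$, one has $p_{12}^{-1}(\cdot)\Lte p_{23}^{-1}(\cdot)\simeq i^{-1}(\cdot\overset{\mathrm{L}}{\boxtimes}\cdot)$. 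Thus it is enough to build a morphism on $X_1\times X_2\times X_2\times X_3$ which, after $i^{-1}$ and $\reim{p_{13}}$, produces $\Hoc(\cA_{13^a})$.

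Next I would apply the sheaf level morphism (\ref{kunnethorigin}) to the two manifolds $X_1\times X_2^a$ and $X_2\times X_3^a$ and use the isomorphism (\ref{isohocfais}) to obtain the Künneth morphism
\[
\Hoc(\cA_{12^a})\overset{\mathrm{L}}{\boxtimes}\Hoc(\cA_{23^a})\to\Hoc(\cA_{12^a}\ubtimes\cA_{23^a}),
\]
where $\cA_{12^a}\ubtimes\cA_{23^a}=\cA_1\ubtimes\cA_2^a\ubtimes\cA_2\ubtimes\cA_3^a$ on $X_1\times X_2\times X_2\times X_3$. The heart of the proof is then to construct a natural contraction
\[
\reim{p_{13}}\,i^{-1}\Hoc(\cA_{12^a}\ubtimes\cA_{23^a})\to\Hoc(\cA_{13^a}).
\]
Here the restriction $i^{-1}$ to the diagonal of the two middle factors pairs the $\cA_2^a$- and $\cA_2$-structures: on the diagonal bimodules this is precisely the tensor product over $\cA_{X_2\times X_2^a}$ appearing in the definition of $\Hoc(\cA_{X_2})$, and $\reim{p_{13}}$ integrates this middle contribution over $X_2$. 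Combined with the base change and projection formulas, which commute $i^{-1}$, $\reim{p_{13}}$ and $\Lte$, this identifies the target with $\Hoc(\cA_1\ubtimes\cA_3^a)=\Hoc(\cA_{13^a})$. Composing the two displays with the identification of the convolution gives (\ref{pairingsheaves}).

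For the statement on global sections, I would apply the functor $\Rg(X_{13};\cdot)$ to (\ref{pairingsheaves}) and combine it with the external product pairing on sections with supports. The hypothesis (\ref{hypo}), that $p_{13}$ is proper on $\Lambda_{12}\times_{X_2}\Lambda_{23}=p_{12}^{-1}\Lambda_{12}\cap p_{23}^{-1}\Lambda_{23}$, then ensures that $\reim{p_{13}}$ carries sections supported on this set to sections supported in $\Lambda_{12}\circ\Lambda_{23}$ and that proper push forward is compatible with taking sections; this produces the cup product (\ref{pairing}).

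The step I expect to be the main obstacle is the contraction above. Since each $\Hoc(\cA_{ij^a})$ is a complex of $\C^\hbar$-modules carrying no $\cA_2$-action, the composition over $X_2$ cannot be performed directly on Hochschild homology: it must be realized on the diagonal $\cA$-bimodules $\cC_{X_2}$ and $\cC_{X_2^a}$, where $\Lte_{\cA_2}$ is defined, and then shown to descend. Establishing the required commutations of $\delta_X^{-1}$, $i^{-1}$, $\reim{p_{13}}$ and $\Lte$, and carefully tracking the orderings of the factors and the dualizing complexes $\w$, is the technical crux.
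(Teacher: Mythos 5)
Your overall skeleton is faithful to how the morphism is actually obtained (the paper itself gives no proof of this proposition: it quotes \cite[Proposition 4.2.1]{KS3}, and only recalls the mechanism later, in the sketch of proof of Proposition \ref{associatif}), and your treatment of supports and global sections via the hypothesis (\ref{hypo}) is standard and correct. The genuine gap is exactly the step you defer: the contraction $\reim{p_{13}}\,i^{-1}\Hoc(\cA_{12^a}\ubtimes\cA_{23^a}) \to \Hoc(\cA_{13^a})$ is the entire mathematical content of (\ref{pairingsheaves}), and the tools you invoke for it cannot produce it. Base change and the projection formula are degree-preserving formal identities; they cannot ``identify the target with $\Hoc(\cA_{13^a})$'' because no such identification exists --- there is only a morphism, and its construction involves a shift by $[2d_2]$ (twice the dimension of the contracted factor $X_2$) that no commutation of $i^{-1}$, $\reim{p_{13}}$ and $\Lte$ can generate. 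Moreover, as you yourself note in your last paragraph, restriction to the partial diagonal does not ``pair the $\cA_2^a$- and $\cA_2$-structures'': $\Hoc(\cA_{12^a}\ubtimes\cA_{23^a})$ is just a complex of $\C^\hbar$-modules carrying no residual algebroid action, so the pairing must be carried out on a kernel representation of Hochschild homology, where it requires genuine duality input rather than bookkeeping.

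Concretely, in \cite{KS3} one writes $\Hoc(\cA_{ij^a})\simeq\fRHom_{\cA_{ii^aj^aj}}(S_{ij},K_{ij})$ with $S_{ij}=\w_i^{-1}\ubtimes\cC_{j^a}$ and $K_{ij}=\cC_i\ubtimes\w_{j^a}$, composes these $\fRHom$'s as kernels, and then needs two nonformal morphisms: $S_{13}\to S_{12}\underset{2^2}{\circ}S_{23}$ on the contravariant side, and the integration morphism $K_{12}\underset{2^2}{\circ}K_{23}\to K_{13}$ on the covariant side. The latter is where the substance lies: it is built from the isomorphism $\Omega_2^{\cA}\Lte_{\mathcal{D}_2^{\cA}}\cA_2[-d_2]\simeq\C_2^{\hbar}$ for the quantized differential operators and quantized forms, followed by the Poincar\'e--Verdier adjunction $\reim{p_{13}}\dashv p_{13}^{!}\simeq p_{13}^{-1}[2d_2]$ --- a quantized Serre-duality trace. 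This is the origin of the dimension shift and the reason the dualizing complexes $\w$ must appear in the representation of $\Hoc$, points which your outline flags only as ``the technical crux.'' So your proposal is a reasonable high-level plan, but as a proof it is incomplete precisely at its central arrow; filling the gap amounts to reproducing the construction of \cite[\S 4.2]{KS3} sketched in the proof of Proposition \ref{associatif}.
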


\begin{corollary} \label{morph}
The morphism (\ref{pairingsheaves}) induces a morphism
\begin{equation}
\underset{\pt}{\cup}:\Hoc(\cA_1) \overset{\mathrm{L}}{\boxtimes} \Hoc(\cA_2) \to \Hoc(\cA_{12})
\end{equation}%
which coincides with the morphism (\ref{kunnethorigin}).
\end{corollary}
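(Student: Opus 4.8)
The plan is to read off $\underset{\pt}{\cup}$ as the degenerate case of the composition morphism (\ref{pairingsheaves}) in which the intermediate manifold is a point, and then to match the outcome with the K\"unneth morphism (\ref{kunnethorigin}). Concretely, I would apply Proposition \ref{pairing homology} to the three manifolds $X_1$, $\pt$, $X_2^a$ playing the roles of $X_1$, $X_2$, $X_3$. Since $\cA_\pt \simeq \C^\hbar$ and the opposite of the opposite algebroid is the original one, the three algebroids occurring in (\ref{pairingsheaves}) simplify to $\cA_{12^a}\simeq\cA_1$, $\cA_{23^a}\simeq\cA_2$ and $\cA_{13^a}\simeq\cA_{12}$, so that (\ref{pairingsheaves}) takes the shape of a morphism $\Hoc(\cA_1)\underset{\pt}{\circ}\Hoc(\cA_2)\to\Hoc(\cA_{12})$.

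Next I would observe that convolution through a point is nothing but the external product. With $X_2=\pt$ the triple product $X_1\times\pt\times X_2^a$ is just $X_1\times X_2^a$, the projection $p_{13}$ is the identity (hence $\dR p_{13!}=\id$), and $p_{12}$, $p_{23}$ are the two projections; by the definition of $\underset{2}{\circ}$ recalled in Notation \ref{not:www1} this yields $\Hoc(\cA_1)\underset{\pt}{\circ}\Hoc(\cA_2)\simeq\Hoc(\cA_1)\overset{\mathrm{L}}{\boxtimes}\Hoc(\cA_2)$. This both justifies the notation $\underset{\pt}{\cup}$ and produces a morphism with exactly the source and target of (\ref{kunnethorigin}).

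It then remains to identify this specialized morphism with (\ref{kunnethorigin}). I would do so by unwinding the construction of (\ref{pairingsheaves}) in \cite[Proposition 4.2.1]{KS3}: it is assembled from base-change and projection-formula isomorphisms along the middle factor together with the multiplication of the kernels $\cC$ over $X_2$. When $X_2=\pt$ there is no integration to perform, and the multiplication along the middle factor reduces to the canonical identification $\C^\hbar\Lte(\cdot)\simeq(\cdot)$; using $\w_\pt\simeq\C^\hbar$ and the isomorphism (\ref{isohocfais}), what survives is precisely the external multiplication of the two complexes $\fRHom_{\cA_{ii^a}}(\w_i^{-1},\cC_i)$, which is the very definition of (\ref{kunnethorigin}).

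I expect the main obstacle to be bookkeeping rather than conceptual. The morphism (\ref{pairingsheaves}) is a composite of several canonical isomorphisms (base change for $p_{13!}$, the projection formula, and associativity of $\Lte_{\cA}$), and one must check that each of them degenerates compatibly when the middle manifold is a point, so that the composite collapses onto (\ref{kunnethorigin}) with no residual dualizing twist and no coherence discrepancy. This should follow from the naturality of the morphisms involved and from the compatibility of the multiplication on $\cC$ with the external product, but it requires tracking the construction of loc.\ cit.\ step by step.
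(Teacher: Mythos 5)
Your proposal is correct and takes essentially the same route as the paper, whose proof consists precisely of the remark that the claim ``follows directly from the construction of morphism (\ref{pairingsheaves})'' with a reference to \cite[\S 4.2]{KS3}. Your specialization of the middle manifold to $\pt$, the identification $\Hoc(\cA_1)\underset{\pt}{\circ}\Hoc(\cA_2)\simeq\Hoc(\cA_1)\overset{\mathrm{L}}{\boxtimes}\Hoc(\cA_2)$ (since $p_{13}=\id$ and $d_{\pt}=0$, so the integration step and the shift $[2d_j]$ degenerate), and the collapse of the kernel multiplication onto the external product defining (\ref{kunnethorigin}) are exactly the bookkeeping the paper delegates to loc.\ cit.
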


\begin{proof}
The result follows directly from the construction of morphism (\ref{pairingsheaves}). We refer the reader to \cite[§4.2]{KS3} for the construction.
\end{proof}%

We will state a result concerning the associativity of the composition of Hochschild homology. It is possible to compose kernels in the framework of DQ-modules. Here, we identify $X_1 \times X_2 \times X_{3^a}$ with the diagonal subset of $X_1 \times X_{2^a} \times X_2 \times X_{3^a}$.

The following definition is Defininition 3.1.2 and Definition 3.1.3 of \cite{KS3}.

\begin{definition}
Let $\cK_i\in\Der^b(\A[ij^a])$ ($i=1,2$, $j=i+1$). 
One sets
\begin{align*}
\cK_1 \uLte_{\cA_2} \cK_2 &= 
(\cK_1\ldetens\cK_2)\Lte_{\A[22^a]} \dA[2]\\
&=p_{12}^{-1} \cK_1 \Lte_{p_{12}^{-1}\cA_{1^a2}} \cA_{123} \Lte_{p_{23^a}^{-1} \cA_{23^a}} p_{23}^{-1} \cK_2,\\
\cK_1\conv[X_2]\cK_2 &= 
\reim{p_{14}}\bl(\cK_1\ldetens\cK_2)\Lte_{\A[22^a]} \dA[2]\br,\\
\cK_1\sconv[X_2]\cK_2&= 
\roim{p_{14}}\bl(\cK_1\ldetens\cK_2)\Lte_{\A[22^a]} \dA[2]\br.
\end{align*}\index{1utens@$\uLte$}\index{1comp@$\circ$}
\end{definition}

It should be noticed that $\uLte$, $\circ$ and $\ast$ are not associative in general. 
\begin{remark}\label{remasso}
There is a morphism $\cK_1 \Lte_{\cA_2} \cK_2 \to \cK_1 \uLte_{\cA_2} \cK_2$ which is an isomorphism if $X_1=\pt$ or $X_3=\pt$. 
\end{remark}

The following proposition, which corresponds to {\cite[Proposition 3.2.4]{KS3}}, states a result concerning the associativity of the composition of kernels in the category of DQ-modules and will be useful for the sketch of proof of Proposition \ref{associatif}.
\begin{proposition}\label{assodqmod}
Let $\mathcal{K}_i \in \Der_{coh}^b(\cA_{i (i+1)^a})$ $(i=1, \, 2, \, 3)$ and let $\mathcal{L} \in \Der_{\coh}^b(\cA_{4})$. Set $\Lambda_i=\Supp(\cK_i)$ and assume that $\Lambda_i \times_{X_{i+1}} \Lambda_{i+1}$ is proper over $X_i \times X_{i+2}$ (i=1, 2).
\begin{enumerate}[(i)]
\item There is a canonical isomorphism $(\cK_1 \underset{2}{\circ} \cK_2) \ldetens \mathcal{L} \stackrel{\sim}{\to} \cK_1 \underset{2}{\circ}( \cK_2 \ldetens \mathcal{L})$.

\item There are canonical isomorphisms
\begin{equation*}
(\cK_1 \underset{2}{\circ} \cK_2) \underset{3}{\circ} \cK_3 \stackrel{\sim}{\leftarrow} (\cK_1 \ldetens \cK_2 \ldetens \cK_3)\underset{22^a33^a}{\circ}(\cC_2 \ldetens \cC_3) \stackrel{\sim}{\to}\cK_1 \underset{2}{\circ} (\cK_2 \underset{3}{\circ} \cK_3).
\end{equation*}
\end{enumerate}
\end{proposition}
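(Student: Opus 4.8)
The plan is to treat both isomorphisms as the standard associativity of kernel composition, now transported to the derived category of DQ-modules, so that the proof reduces to two functorial inputs already available in this setting (cf.\ \cite[\S3]{KS3}): the projection formula, i.e.\ the commutation of the proper direct image $\reim{p}$ with the external product $\ldetens$, and base change for $\reim{p}$ along Cartesian squares. Recall that by definition $\cK_1 \underset{2}{\circ} \cK_2 = \reim{p_{14}}((\cK_1\ldetens\cK_2)\Lte_{\cA_{22^a}}\cC_{X_2})$, so each composition is built out of an external tensor product, a restriction to a diagonal (tensoring with $\cC_{X_2}$), and a proper pushforward. The whole argument consists in commuting these three operations past one another, exactly as in the sheaf-theoretic proof.

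For (i), the point is that $\mathcal{L} \in \Der^b_{\coh}(\cA_4)$ sits on a factor $X_4$ disjoint from $X_1,X_2,X_3$, so the projection $p_{14}$ and the diagonal restriction defining $\cK_1\underset{2}{\circ}\cK_2$ act trivially on $X_4$. The isomorphism then follows by moving $\mathcal{L}$ across the pushforward via $\reim{p_{14}}(\mathcal{M})\ldetens\mathcal{L}\simeq\reim{p_{14}\times\id_{X_4}}(\mathcal{M}\ldetens\mathcal{L})$, together with the fact that $\Lte_{\cA_{22^a}}\cC_{X_2}$ commutes with $\ldetens\mathcal{L}$ since the two act on disjoint factors. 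Rearranging the external tensor factors identifies the resulting integrand with that of $\cK_1\underset{2}{\circ}(\cK_2\ldetens\mathcal{L})$.

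For (ii), I would introduce the symmetric triple composition $\mathcal{S}:=(\cK_1\ldetens\cK_2\ldetens\cK_3)\underset{22^a33^a}{\circ}(\cC_2\ldetens\cC_3)$ as the common object and exhibit an isomorphism onto each iterated composition. The key observation is that $\mathcal{S}$ contracts the $X_2\times X_{2^a}$ pair (against $\cC_2$) and the $X_3\times X_{3^a}$ pair (against $\cC_3$) independently, so the two contractions and their associated pushforwards may be carried out in either order. Contracting the $X_2$-pair first and invoking (i) with $\cK_3$ in the role of $\mathcal{L}$ peels off $\cK_3$ and produces $(\cK_1\underset{2}{\circ}\cK_2)\underset{3}{\circ}\cK_3$; contracting the $X_3$-pair first and invoking (i) with $\cK_1$ produces $\cK_1\underset{2}{\circ}(\cK_2\underset{3}{\circ}\cK_3)$. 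The interchange of the two pushforwards, one along the $X_2$-direction and one along the $X_3$-direction, is precisely base change along the Cartesian square of the two independent projections, while associativity of $\Lte$ and $\ldetens$ handles the reassembly of the tensor factors.

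The main obstacle is not conceptual but organizational: one must keep track of the many projections $p_{ij}$ and diagonal embeddings, verify that each tensor product is taken over the correct algebroid ($\cA_{22^a}$ or $\cA_{33^a}$), and above all check that the hypotheses ``$\Lambda_i\times_{X_{i+1}}\Lambda_{i+1}$ proper over $X_i\times X_{i+2}$'' make every intervening square Cartesian with proper vertical maps, so that the base-change and projection-formula isomorphisms genuinely apply and the compositions remain in $\Der^b_{\coh}$. Since this statement is \cite[Proposition 3.2.4]{KS3}, I would ultimately defer to that reference for the careful verification of these coherence and properness conditions.
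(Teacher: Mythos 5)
The paper gives no proof of this proposition at all: it is imported verbatim as \cite[Proposition 3.2.4]{KS3}, so your concluding deferral to that reference is precisely what the paper does, and your sketch (projection formula, base change, and comparing both iterated compositions to the symmetric middle object $(\cK_1 \ldetens \cK_2 \ldetens \cK_3)\underset{22^a33^a}{\circ}(\cC_2 \ldetens \cC_3)$ --- the right move, since the paper itself remarks that $\circ$ is not associative in general) matches the standard argument of Kashiwara--Schapira. The only mild caution is that in the DQ setting the coherence and properness hypotheses are more than ``organizational'': they are what make the canonical comparison morphisms isomorphisms (via the finiteness theorem and reduction through $\gr_\hbar$ on cohomologically complete objects), but since you explicitly flag these verifications and hand them to \cite{KS3}, your proposal is in essence the same proof as the paper's.
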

The next proposition is the translation of Property (P\ref{assol}) in the framework of DQ-modules.
\begin{proposition}\label{associatif}
\begin{enumerate}[(i)]
\item \label{claimasso} Assume that $X_i$ is compact for $i= 2, \; 3$. The following diagram is commutative
\begin{small}
\begin{equation*}
\xymatrix @C=1.2pc{\Hoc(\cA_{12^a}) \underset{2}{\circ}  \Hoc(\cA_{23^a}) \underset{3}{\circ}  \Hoc(\cA_{34^a}) \ar[r] \ar[d] & \Hoc(\cA_{12^a}) \underset{2}{\circ}  \Hoc(\cA_{24^a}) \ar[d] \\
\Hoc(\cA_{13^a}) \underset{3}{\circ}  \Hoc(\cA_{34^a}) \ar[r] & \Hoc(\cA_{14^a}).}
\end{equation*}
\end{small}
\item Assume that $X_i$ is compact for $i=1, \; 2, \; 3, \; 4$. The preceding diagram induces a commutative diagram
\begin{small}
\begin{equation*}
\xymatrix{\bHH(\cA_{12^a}) \Lte  \bHH(\cA_{23^a}) \Lte \bHH(\cA_{34^a}) \ar[r] \ar[d] & \bHH(\cA_{12^a}) \Lte\ \bHH(\cA_{24^a}) \ar[d] \\
\bHH(\cA_{13^a}) \Lte  \bHH(\cA_{34^a}) \ar[r] & \bHH(\cA_{14^a}).}
\end{equation*}
\end{small}
\end{enumerate}
\end{proposition}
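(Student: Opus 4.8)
The plan is to reduce the associativity of the composition of Hochschild homology to the associativity of the composition of coherent kernels, that is, to Proposition \ref{assodqmod}.

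For (\ref{claimasso}), I would begin by unwinding the construction of the composition morphism (\ref{pairingsheaves}) recalled from \cite[\S 4.2]{KS3}. Using the description of Hochschild homology through the coherent objects $\cC_i$ and the isomorphism (\ref{isohocfais}), the morphism (\ref{pairingsheaves}) is assembled from the composition of these kernels over the middle variable together with the canonical contraction furnished by the relevant $\cC$-object. Consequently, both the clockwise composite (compose first over $X_2$, then over $X_3$) and the counterclockwise composite (first over $X_3$, then over $X_2$) in the square are built from the same three kernels composed over $X_2$ and $X_3$; they differ only in the bracketing.

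The heart of the argument is then to identify both composites with a single triple composition. The compactness of $X_2$ and $X_3$ guarantees the properness hypotheses of Proposition \ref{assodqmod} and ensures that $\reim{p} = \roim{p}$ for the projections involved. Proposition \ref{assodqmod}(ii) then supplies, for coherent kernels $\cK_i$, canonical isomorphisms
\[
(\cK_1 \underset{2}{\circ}\cK_2)\underset{3}{\circ}\cK_3 \stackrel{\sim}{\leftarrow}(\cK_1 \ldetens \cK_2 \ldetens \cK_3)\underset{22^a33^a}{\circ}(\cC_2 \ldetens \cC_3)\stackrel{\sim}{\to}\cK_1 \underset{2}{\circ}(\cK_2 \underset{3}{\circ}\cK_3),
\]
so that, applied to the kernels defining the three Hochschild homologies, both bracketings factor through the symmetric middle object. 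I would then check that the two contraction-and-composition procedures appearing in the square agree with these two associativity isomorphisms, whence commutativity follows from their naturality. I expect this matching of the explicitly constructed morphism (\ref{pairingsheaves}) with the abstract associativity isomorphism of Proposition \ref{assodqmod}, while keeping track of all the projection-formula and base-change identifications, to be the main obstacle; the conceptual content is entirely carried by Proposition \ref{assodqmod}.

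Finally, for (ii), I would apply the functor $\Rg(X_1 \times X_4^a;-)$ to the commutative diagram of (\ref{claimasso}). By construction, the composition $\underset{2}{\cup}$ of (\ref{pairing}) is obtained from the sheaf-level composition $\underset{2}{\circ}$ of (\ref{pairingsheaves}) by taking derived global sections and inserting the K\"unneth isomorphisms of Proposition \ref{kunisodq}, which are available here because every $X_i$ is compact. The $\bHH$-diagram of (ii) is therefore the image of the commuting sheaf-diagram of (\ref{claimasso}) under $\Rg(X_1 \times X_4^a;-)$ together with these K\"unneth isomorphisms, and hence commutes.
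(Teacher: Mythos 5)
Your proposal is correct and takes essentially the same route as the paper: the paper likewise reduces (i) to Proposition \ref{assodqmod}(ii) by factoring both bracketings through the symmetric middle object $(\cK_1 \ldetens \cK_2 \ldetens \cK_3)\underset{22^a33^a}{\circ}(\cC_2 \ldetens \cC_3)$, writing $\Hoc(\cA_{ij^a})\simeq\fRHom(S_{ij},K_{ij})$ and then verifying, just as you anticipate, that the explicitly constructed contraction morphisms are compatible with the associativity isomorphisms (one coherence diagram on the contravariant $S$-side and one on the covariant $K$-side). Part (ii) is obtained exactly as you say, by combining (i) with the K\"unneth isomorphism of Proposition \ref{kunisodq}(ii) under the compactness hypotheses.
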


\begin{proof}[Sketch of Proof]
\begin{enumerate}[(i)]
\item If $\cM \in \Der(\cA_X)$ and $\cN \in \Der(\cA_Y)$, we write $\cM \cN$ for  $\cM \underline{\boxtimes} \cN$ and $i^k$ for $\underbrace{X_i \times \ldots \times X_i}_{k \; times}$. For the legibility, we omit the upper script $(\cdot)^a$ when indicating the base of a composition.

Following the notation of \cite[§4.2]{KS3}, we set $S_{ij}:= \w_i^{-1} \underline{\boxtimes} \cC_{j^a} \in \Der_{\coh}^b(\cA_{ii^aj^aj})$ and $K_{ij}=\cC_i \underline{\boxtimes} \w_{j^a} \in \Der_{\coh}^b(\cA_{ii^aj^aj})$. It follows that%
\begin{equation*}
\Hoc(\cA_{ij^a})\simeq \fRHom_{\cA_{ii^aj^aj}}(S_{ij},K_{ij}).
\end{equation*}%

We deduce from Proposition \ref{assodqmod} (ii), the following diagram which commutes.

\begin{equation}\label{assoundemi}
\scalebox{0.61}{
\xymatrix @C=1.2pc{\Hoc(\cA_{12^a}) \underset{2}{\circ}  \Hoc(\cA_{23^a}) \underset{3}{\circ}  \Hoc(\cA_{34^a}) \ar[r] \ar[d] & \fRHom_{}(S_{12} \underset{2^2}{\circ} S_{23}, K_{12} \underset{2^2}{\circ} K_{23})  \underset{3}{\circ} \Hoc(\cA_{24^a}) \ar[d] \\
\Hoc(\cA_{12^a}) \underset{2}{\circ}  \fRHom_{}(S_{23} \underset{3^2}{\circ} S_{34}, K_{23} \underset{3^2}{\circ} K_{34}) \ar[d] & \fRHom_{}((S_{12} \underset{2^2}{\circ} S_{23}) \underset{3^2}{\circ} S_{34}, (K_{12} \underset{2^2}{\circ}K_{23}) \underset{3^2}{\circ} K_{34}) \ar[d]\\
\fRHom(S_{12} \underset{2^2}{\circ} (S_{23} \underset{3^2}{\circ} S_{34}), K_{12} \underset{2^2}{\circ} (K_{23} \underset{3^2}{\circ} K_{34})) \ar[r] & \fRHom((S_{12} S_{23}  S_{34})\underset{2^43^4}{\circ}(\cC_{22^a}   \cC_{33^a}),(K_{12}  K_{23}  K_{34})\underset{2^43^4}{\circ}(\cC_{22^a} \cC_{33^a})).}}
\end{equation}
Following the proof of \cite[Proposition 4.2.1]{KS3}, we have a morphism
\begin{equation} \label{integrationpairing}
K_{ij} \underset{j^2}{\circ} K_{jk} \to K_{ik}
\end{equation}
constructed as follows
\begin{align*}
(\cC_i \w_{j^a}) \uLte_{\cA_{jj^a}} (\cC_j \w_{k^a}) \simeq& ((\cC_i \w_{j^a}) (\cC_j \w_{k^a})) \Lte_{\cA_{jj^a(jj^a)^a}} \cC_{jj^a}\\
\simeq & ((\cC_i \w_{k^a}) (\w_{j^a} \cC_j)) \Lte_{\cA_{jj^a(jj^a)^a}} \cC_{jj^a}\\
\simeq & (\cC_i \w_{k^a} \w_{j^a}) \Lte_{\cA_{jj^a}} \cC_j\\
\to & \lbrack (\cC_i \w_k) p_j^{-1} \delta_{j \ast} \Omega_j^{\cA} \rbrack \Lte_{\mathcal{D}_j^{\cA}} p_j^{-1} \delta_{\ast j} \cA_j \stackrel{\sim}{\leftarrow} p_{ik}^{-1}(\cC_i \w_k) [2d_j].
\end{align*}

where $\mathcal{D}_j^{\cA}$ is the quantized ring of differential operator with respect to $\cA_j$ (see Definition 2.5.1 of \cite{KS3}) and $\Omega_j^{\cA}$ is the quantized module of differential form with respects to $\cA_j$ (see Definition 2.5.5 of \cite{KS3}). By \cite[Lemma 2.5.5]{KS3} there is an isomorphism $\Omega_j^{\cA} \Lte_{\mathcal{D}_j^{\cA}} \cA_j[-d_j] \simeq \C^\hbar_j$ where $d_j$ denotes the complex dimension of $X_j$. This isomorphism gives the last arrow in the construction  of morphism (\ref{integrationpairing}).

By adjunction between $Rp_{ik!}$ and $p_{ik}^!\simeq p_{ik}^{-1}[2d_j]$ , we get the morphism (\ref{integrationpairing}).
Choosing $i=1$, $j=23$ and $k=4$, we get the morphism
\begin{equation*}
(\cC_1 \w_{4^a} \w_{2^a3^a})\underset{2^23^2}{\circ} \cC_{23} \to \cC_1 \w_{4^a}.
\end{equation*}
There are the isomorphisms
\begin{align*}
(K_{12}  K_{23}  K_{34})\underset{2^43^4}{\circ}(\cC_{22^a} \cC_{33^a}) & \simeq ((\cC_1 \w_{4^a}\w_{2^a3^a}) \cC_{23}) \underset{2^43^4}{\circ}(\cC_{232^a3^a})\\
& \simeq (\cC_1 \w_{4^a} \w_{2^a3^a})\underset{2^23^2}{\circ} \cC_{23}.
\end{align*}
Thus, we get a map
\begin{equation*}
(K_{12}  K_{23}  K_{34})\underset{2^43^4}{\circ}(\cC_{22^a} \cC_{33^a}) \to K_{14}.
\end{equation*}
By construction of the morphism (\ref{integrationpairing}) and of the isomorphism of Proposition \ref{assodqmod} (ii), the below diagram commutes
\begin{equation} \label{assoK}
\xymatrix{ (K_{12} \underset{2^2}{\circ} K_{23})\underset{3^2}{\circ} K_{34} \ar[r]  & K_{13} \underset{3^2}{\circ} K_{34} \ar[d] \\
(K_{12}  K_{23}  K_{34})\underset{2^43^4}{\circ}(\cC_{22^a}\cC_{33^a}) \ar[d]^-{\wr}  \ar[u]_-{\wr} \ar[r]& K_{14}\\
K_{12} \underset{2^2}{\circ} (K_{23}\underset{3^2}{\circ} K_{34}) \ar[r]  & K_{12} \underset{2^2}{\circ} K_{24}. \ar[u]
} 
\end{equation}

Similarly, we get the following commutative diagram

\begin{equation}\label{assoS}
\xymatrix{ S_{13} \underset{3^2}{\circ} S_{34} \ar[r]& S_{12} \underset{2^2}{\circ}(S_{23} \underset{3^2}{\circ} S_{34})  \\
S_{14} \ar[u] \ar[d] \ar[r] & (S_{12} S_{23} S_{34}) \underset{2^4 3^4}{\circ} ( \cC_{22^a} \cC_{33^a}) \ar[u]_-{\wr} \ar[d]^-{\wr} \\
S_{12} \underset{2^2}{\circ} S_{24} \ar[r]& (S_{12} \underset{2^2}{\circ}S_{23}) \underset{3^2}{\circ} S_{34}. 
}
\end{equation}

It follows from the commutation of the diagrams (\ref{assoK}) and (\ref{assoS}) that the diagram below commutes.
\begin{equation}\label{assodeuxdemi}
\scalebox{0.7}{
\xymatrix{\fRHom(S_{12} \underset{2^2}{\circ} S_{23} , K_{12} \underset{2^2}{\circ} K_{23})\underset{3}{\circ} \Hoc(\cA_{34^a}) \ar[r] \ar[d] & \Hoc(\cA_{13^a}) \underset{3}{\circ} \Hoc(\cA_{34^a}) \ar[d]\\
\fRHom((S_{12} \underset{2^2}{\circ} S_{23}) \underset{3^2}{\circ} S_{34}, (K_{12} \underset{2^2}{\circ} K_{23}) \underset{3^2}{\circ} K_{34}) \ar[r]  & \fRHom(S_{13} \underset{3^2}{\circ} S_{34} , K_{13} \underset{3^2}{\circ} K_{34}) \ar[d]\\
\fRHom((S_{12} S_{23}  S_{34})\underset{2^43^4}{\circ}(\cC_{22^a}   \cC_{33^a}),(K_{12}  K_{23}  K_{34})\underset{2^43^4}{\circ}(\cC_{22^a} \cC_{33^a})) \ar[u]^-{\wr} \ar[d]_-{\wr} \ar[r] & \Hoc(\cA_{14})\\
\fRHom(S_{12} \underset{2^2}{\circ} (S_{23} \underset{3^2}{\circ} S_{34}), K_{12} \underset{2^2}{\circ} (K_{23} \underset{3^2}{\circ} K_{34})) \ar[r]  & \fRHom(S_{12} \underset{2^2}{\circ} S_{24} , K_{12} \underset{2^2}{\circ} K_{24}) \ar[u]\\
\Hoc(\cA_{12}) \underset{2}{\circ} \fRHom(S_{23} \underset{3^2}{\circ} S_{34}, K_{23} \underset{3^2}{\circ} K_{34}) \ar[r] \ar[u] & \Hoc(\cA_{12}) \underset{2}{\circ} \Hoc(\cA_{24}). \ar[u]}
}
\end{equation}

The commutativity of the diagram (\ref{assoundemi}) and (\ref{assodeuxdemi}) prove (\ref{claimasso}).

\item is a consequence of (i) and of Proposition \ref{kunisodq} (ii).
\end{enumerate}
\end{proof}
\subsection{Hochschild class}

Let $\mathcal{M} \in \Der^b_{\coh}(\cA_X)$. We have the chain of morphisms

\begin{align*}
\hh_{\mathcal{M}}:\fRHom_{\cA_X}(\cM,\cM) &\stackrel{\sim}{\leftarrow} \Du_{\cA_X}^{\prime}(\cM) \Lte_{\cA_X} \cM\\
               & \simeq \delta^{-1}(\mathcal{C}_{X^a} \Lte_{\cA_{X \times X^a}} ( \cM \underline{\boxtimes} \Du^{\prime}_{\cA_X}(\cM)))\\
               & \rightarrow \delta^{-1}(\mathcal{C}_{X^a} \Lte_{\cA_{X \times X^a}} \mathcal{C}_X).
\end{align*}

We get a map 

\begin{equation}\label{map hoch}
\hh^0_\cM:\Hom_{\cA_X}(\cM,\cM) \to \Hn^0_{\Supp(\cM)}(X,\Hoc(\cA_X)).
\end{equation}

\begin{definition}
The image of an endomorphism $f$ of $\mathcal{M}$ by the map (\ref{map hoch}) gives an element $\hh_{X}(\cM,f) \in \Hn^0_{\Supp(\cM)}(X,\Hoc(\cA_X))$\index{hochschildclass@$\hh$} called the Hochschild class of the pair $(\cM,f)$. If $f=\id_{\cM}$, we simply write $\hh_X(\cM)$\index{hochschildclass@$\hh$} and call it the Hochschild class of $\cM$.
\end{definition}

\begin{remark}\label{representation_Ch}
Let $M \in \Der^{b}_{f}(\C^\hbar)$ and let $f \in \Hom_{\C^\hbar}(M,M)$. Then the Hochschild class $\hh_{\C^{\hbar}}(M,f)$ of $f$ is obtained by the composition
\begin{align*}
  \C^\hbar \to \RHom_{\C^\hbar}(M,M) 
\to & M \Lte_{\C^\hbar} \RHom_{\C^\hbar}(M,\C^\hbar)  \stackrel{f \otimes \id}{\to} 
M \Lte_{\C^\hbar} \RHom_{\C^\hbar}(M,\C^\hbar)& \\
&\to \RHom_{\C^\hbar}(M,\C^\hbar) \Lte_{\C^\hbar} M \to \C^\hbar . 
\end{align*}
Thus, it is the trace of $f$ in $\Der^b(\C^\hbar)$.
\end{remark}

\subsection{Actions of Kernels}

We explain how kernels act on Hochschild homology. 
Let $X_1$ and $X_2$ be compact complex manifolds endowed with DQ-algebroids $\cA_1$ and $\cA_2$. Let $\lambda \in \HH_0(\cA_{12^a})$. There is a morphism 
\begin{equation} \label{moratr}
\Phi_\lambda: \bHH(\cA_2) \to \bHH(\cA_1)
\end{equation}%
given by%
\begin{equation*}
\bHH(\cA_2) \simeq \C^\hbar \Lte \bHH(\cA_2) \stackrel{\lambda \otimes \id}\to \bHH(\cA_{12^a}) \Lte \bHH(\cA_2) \stackrel{\underset{2}{\cup}}{\to} \bHH(\cA_1).
\end{equation*}%
If $\cK$ is an object of $\Der^b_{\coh}(\cA_{12^a})$ then there is a morphism
\begin{equation} \label{Phik}
\Phi_{\cK}: \bHH(\cA_{2}) \to \bHH_(\cA_{1})
\end{equation}%
obtained from morphism (\ref{moratr}) by choosing $\lambda=\hh_{X_{12^a}}(\cK)$. In \cite{KS3}, the authors give initially a different definition and show in \cite[Lemma 4.3.4]{KS3} that it is equivalent to the present definition. 

We denote by $\w^\mathrm{top}_X$\index{omegadualtop@$\w^\mathrm{top}_X$} the dualizing complex of the category $\Der^{+}(\C_X^{\hbar})$.

\begin{proposition} \label{prep}
Let $X_i$, $(i=1, \,2)$ be a compact complex manifold endowed with a DQ-algebroid $\cA_i$. 

\begin{enumerate}[(i)]

\item The following diagram commutes.

\begin{equation}
\xymatrix{
p_1^{-1} \Hoc(\cA_{1^a}) \Lte \Hoc(\cA_{12^a}) \Lte p_2^{-1} \Hoc(\cA_2) \ar[d] \ar[r]^-{\cdot \underset{1}{\cup} \cdot \underset{2}{\cup} \cdot}& \w^{\mathrm{top}}_{12} \\
\Hoc(\cA_{12^a}) \Lte \Hoc(\cA_{1^a2}) \ar[ru]_-{\underset{1^a2}{\cup}} &.
}
\end{equation}

\item The diagram
\begin{equation}\label{diagprep}
\xymatrix{
 \bHH(\cA_{1^a}) \Lte \bHH(\cA_{12^a}) \Lte \bHH(\cA_2) \ar[d] \ar[r]^-{\cdot \underset{1}{\cup} \cdot \underset{2}{\cup} \cdot}& \C^\hbar \\
\bHH(\cA_{12^a}) \Lte \bHH(\cA_{1^a2}) \ar[ru]_-{\underset{1^a2}{\cup}} & 
}
\end{equation}
commutes.
\end{enumerate}
\end{proposition}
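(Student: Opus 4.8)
The plan is to establish (i) at the level of sheaves on $X_1\times X_2$ and then to obtain (ii) by applying $\Rg(X_1\times X_2;-)$ and invoking compactness. Both the top morphism $\cdot\underset{1}{\cup}\cdot\underset{2}{\cup}\cdot$ and the composite of the left vertical arrow with the diagonal $\underset{1^a2}{\cup}$ are morphisms of $\Der^b(\C^\hbar_{12})$ from $p_1^{-1}\Hoc(\cA_{1^a})\Lte\Hoc(\cA_{12^a})\Lte p_2^{-1}\Hoc(\cA_2)$ to $\w^{\mathrm{top}}_{12}$, and it suffices to check that they agree. I would unwind both using the construction of the composition morphism (\ref{pairingsheaves}) recalled in the proof of Proposition \ref{associatif}, that is, through the kernels $S_{ij}$, $K_{ij}$ and the integration morphism (\ref{integrationpairing}). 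The left vertical arrow is nothing but the K\"unneth morphism (\ref{kunnethorigin}) applied to the outer factors, $p_1^{-1}\Hoc(\cA_{1^a})\Lte p_2^{-1}\Hoc(\cA_2)=\Hoc(\cA_{1^a})\overset{\mathrm{L}}{\boxtimes}\Hoc(\cA_2)\to\Hoc(\cA_{1^a2})$, followed by the obvious reordering; by Corollary \ref{morph} this K\"unneth map is the composition $\underset{\pt}{\cup}$. Hence the right-hand route is the single pairing over $X_{1^a}\times X_2$ preceded by this external product.

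The content of the statement is then the Fubini-type assertion that contracting the $X_1$-leg and the $X_2$-leg one after the other coincides with contracting the $X_1\times X_2$-leg all at once. I would deduce this from the associativity of the composition of the three Hochschild kernels $\Hoc(\cA_{1^a})$, $\Hoc(\cA_{12^a})$ and $\Hoc(\cA_2)$, that is, from Proposition \ref{assodqmod} and Proposition \ref{associatif}, together with the compatibility of the integration morphism (\ref{integrationpairing}) with external products. Concretely, one uses $\w^{\mathrm{top}}_{12}\simeq\w^{\mathrm{top}}_{1}\boxtimes\w^{\mathrm{top}}_{2}$, $\delta_{12}=\delta_1\times\delta_2$ up to reordering, and the multiplicativity of the rings $\mathcal{D}_j^{\cA}$ and modules $\Omega_j^{\cA}$ under products, so that the morphism (\ref{integrationpairing}) over $X_1\times X_2$ factors as the one over $X_1$ followed by the one over $X_2$. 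Feeding this into the associativity isomorphisms of Proposition \ref{assodqmod}(ii) yields the commutativity of the triangle in (i).

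To pass to (ii), I would apply the functor $\Rg(X_1\times X_2;-)$ to the commutative diagram of (i). As $X_1$ and $X_2$, hence $X_1\times X_2$, are compact, the K\"unneth isomorphism of Proposition \ref{kunisodq}(ii) identifies $\Rg(X_{12};-)$ of the external products in the two left-hand corners with $\bHH(\cA_{1^a})\Lte\bHH(\cA_{12^a})\Lte\bHH(\cA_2)$ and $\bHH(\cA_{12^a})\Lte\bHH(\cA_{1^a2})$ respectively, recovering the source objects of diagram (\ref{diagprep}). The integration (trace) morphism $\Rg(X_{12};\w^{\mathrm{top}}_{12})\to\C^\hbar$ turns the target into $\C^\hbar$, and by construction the resulting global maps are the pairings $\underset{1}{\cup}$, $\underset{2}{\cup}$ and $\underset{1^a2}{\cup}$ of Proposition \ref{pairing homology}. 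Commutativity of (\ref{diagprep}) then follows from (i) by functoriality of $\Rg$.

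The hard part will be (i): the sheaf-level bookkeeping showing that the two successive contractions assemble into the single pairing over the product. Since the composition morphisms are built from the integration morphism (\ref{integrationpairing}), verifying that its formation commutes with external products and with the associativity isomorphisms of Proposition \ref{assodqmod} requires carefully tracking the dualizing complexes $\w_i$, the degree shifts $[2d_j]$ and the orientation classes. The associativity result does the conceptual work, but matching these normalizations is where the real care is needed.
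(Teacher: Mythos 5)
Your reduction of (ii) to (i) --- apply $\Rg(X_{12};\cdot)$, use the K\"unneth isomorphism of Proposition \ref{kunisodq} (which is where compactness enters) and the integration morphism $\Rg(X_{12};\w^{\mathrm{top}}_{12})\to\C^\hbar$ --- matches the paper, which indeed deduces (ii) directly from (i). For (i), however, your argument has a gap exactly at its crux, which you yourself flag: the Fubini-type compatibility of the integration morphism (\ref{integrationpairing}) with external products (the factorization of the contraction over $X_1\times X_2$ through the contractions over $X_1$ and $X_2$, with the shifts $[2d_j]$, the modules $\Omega^{\cA}_j$ and the dualizing complexes all matching) is announced as ``the hard part'' but never carried out, and it is precisely the content of the statement. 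Citing Propositions \ref{assodqmod} and \ref{associatif} does not supply it: those results govern iterated contractions along a chain of manifolds, whereas the right-hand route in (i) is a \emph{single} pairing over $X_{1^a}\times X_2$ reached via the K\"unneth morphism, and identifying that single pairing with the iterated contraction is exactly the unproved compatibility, not an instance of the associativity results.

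You also miss the observation that makes all of this bookkeeping unnecessary, and which constitutes the paper's entire proof of (i): in the compositions occurring here one of the outer manifolds is always a point, so by Remark \ref{remasso} the convolution tensor product $\uLte$ reduces to the ordinary tensor product $\Lte$. Once this is noted, no unwinding through the kernels $S_{ij}$, $K_{ij}$, and no tracking of $\mathcal{D}^{\cA}_j$, $\Omega^{\cA}_j$ or degree shifts is needed: the commutativity of the triangle is a formal consequence of the projection formula and the associativity of the tensor product, and (ii) then follows as you describe. Your program could plausibly be completed along the lines you sketch, but as written the proof of (i) is incomplete, and the short route via Remark \ref{remasso} is both what the paper does and the efficient way to close the gap.
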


\begin{proof}
\begin{enumerate}[(i)]

\item \label{premierpoint} In view of Remark \ref{remasso}, only usual tensor products are involved. Thus, it is a consequence of the projection formula and of the associativity of the tensor product.

\item follows from (\ref{premierpoint}).
\end{enumerate}
\end{proof}%

The composition
\begin{equation*}
\C^\hbar_{X \times X^a} \to \fRHom_{\cC_{X \times X^a}}(\cC_X,\cC_X)\stackrel{\hh_{\cC_X}}{\to} \Hoc(X \times X^a)
\end{equation*}
induces a map
\begin{equation}\label{hhdelta}
\hh(\Delta_X):\C^\hbar \to \bHH(\cA_{X \times X^a}).
\end{equation}%
The image of $1_{\C^\hbar}$ by $\hh(\Delta_X)$ is $\hh_{X \times X^a}(\cC_X)$.

\begin{proposition}\label{preuveidcomp}
The left (resp. right) actions of $\hh_{X \times X^a}(\cC_X)$ on $\bHH(\cA_X)$ (resp. $\bHH(\cA_{X^a})$) via the morphism (\ref{pairing}) are the trivial action.
\end{proposition}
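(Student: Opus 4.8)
The plan is to recognize both actions as the morphism $\Phi_{\cC_X}$ attached to the identity kernel and then to use that $\Phi_{\cC_X}$ is the identity, a fact whose content is supplied by \cite{KS3}. Since the two cases are symmetric, I would treat the left action in detail and only indicate the changes needed for the right one.

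First I would unwind the definition of the action. Taking $X_1 = X_2 = X$ and $\lambda = \hh(\Delta_X) = \hh_{X \times X^a}(\cC_X)$ in (\ref{moratr}) and (\ref{Phik}), the left action of $\hh_{X \times X^a}(\cC_X)$ on $\bHH(\cA_X)$ via the morphism (\ref{pairing}) is exactly the composition
\begin{equation*}
\bHH(\cA_X) \simeq \C^\hbar \Lte \bHH(\cA_X) \stackrel{\hh(\Delta_X) \otimes \id}{\longrightarrow} \bHH(\cA_{X \times X^a}) \Lte \bHH(\cA_X) \stackrel{\underset{X}{\cup}}{\longrightarrow} \bHH(\cA_X),
\end{equation*}
that is, the morphism $\Phi_{\cC_X}$ associated with the kernel $\cC_X = \delta_{X \ast} \cA_X \in \Der^b_{\coh}(\cA_{X \times X^a})$. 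Thus the assertion for the left action amounts to the single equality $\Phi_{\cC_X} = \id_{\bHH(\cA_X)}$.

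To obtain this equality I would use that $\cC_X$ is the unit for the composition of kernels, $\cC_X \underset{X}{\circ} \cM \simeq \cM$ for every $\cM \in \Der^b_{\coh}(\cA_X)$, together with the compatibility of the action with composition of kernels established in \cite{KS3}. Concretely, the description of the action shown to be equivalent to the present one in \cite[Lemma 4.3.4]{KS3} realizes $\Phi_{\cK}$ in terms of the functor $\cM \mapsto \cK \underset{X}{\circ} \cM$ on module categories; as $\cC_X$ is the kernel of the identity functor, the induced morphism $\Phi_{\cC_X}$ is $\id_{\bHH(\cA_X)}$. The right action on $\bHH(\cA_{X^a})$ is handled in the same manner, with $X$ replaced by $X^a$ and $\cC_X$ by the diagonal kernel $\cC_{X^a}$, after using the DQ-analogue of Property (P\ref{viceversa}) to identify the class governing that action with $L(\sigma)\hh(\Delta_X)$, so that once more only the identity functor is involved.

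The delicate point, and the genuine input of the argument, is the equality $\Phi_{\cC_X} = \id$ itself. Although it merely expresses that the diagonal kernel acts trivially, it is not a formal consequence of the axioms recalled in Section \ref{monoidalframework}: it rests on the explicit construction of the composition morphism (\ref{pairingsheaves}) and on the normalization of the Hochschild class of $\cC_X$ carried out in \cite[Section 4.3]{KS3}. Once that input is granted, the remainder of the proof is only a translation of the present notation into that of loc.\ cit.
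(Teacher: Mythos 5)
Your proof is sound in substance but more roundabout than the paper's, which consists of the single citation \cite[Lemma 4.3.2]{KS3}: that lemma asserts exactly that $\hh_{X \times X^a}(\cC_X)$ is a left and right unit for the composition product (\ref{pairing}), i.e.\ the proposition verbatim. Your decomposition --- identify the left action with $\Phi_{\cC_X}$ and prove $\Phi_{\cC_X}=\id_{\bHH(\cA_X)}$ via \cite[Lemma 4.3.4]{KS3} together with the unit property $\cC_X \underset{X}{\circ} \cM \simeq \cM$ of the diagonal kernel --- is a genuinely different route, and you correctly sidestep the tempting but insufficient argument through Theorem \ref{Riemann-Roch}, which controls only classes of the form $\hh_X(\cM,f)$ and says nothing about arbitrary elements of $\bHH(\cA_X)$. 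Two remarks, however. First, in \cite{KS3} the unit property (Lemma 4.3.2) precedes and is independent of the comparison of the two definitions of $\Phi_\cK$ (Lemma 4.3.4), so your derivation inverts the logical order of the source; since you yourself flag that the normalization $\Phi_{\cC_X}=\id$ is ``the genuine input'' to be taken from \cite[Section 4.3]{KS3}, your argument ultimately defers precisely the statement being proved, and the intermediate reduction buys no economy over the paper's direct citation --- though it does have the merit of making transparent why the statement is not a formal consequence of the axioms of Section \ref{monoidalframework}. Second, your treatment of the right action is needlessly indirect: taking $X_1=\pt$ and $X_2=X_3=X$ in (\ref{pairing}) exhibits it as $a \mapsto a \underset{X}{\cup} \hh_{X \times X^a}(\cC_X)$ on $\bHH(\cA_{X^a})$, with no symmetry isomorphism involved, and this is covered verbatim by the right-unit half of \cite[Lemma 4.3.2]{KS3}; your detour through $\sigma_\ast$, the kernel $\cC_{X^a}$ (Proposition \ref{propiii}) and the analogue of Property (P\ref{switch}) imports compatibilities that are never needed for this statement.
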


\begin{proof}
See \cite[Lemma 4.3.2]{KS3}.
\end{proof}

We define the morphism $\zeta:\bHH(\cA_{X \times X^a}) \to \C^\hbar$ as the composition
\begin{equation*}
\scalebox{0.91}{$
\bHH(\cA_{X^a \times X}) \simeq \C^\hbar \Lte \bHH(\cA_{X^a \times X}) \stackrel{\hh(\Delta_X) \otimes \id}{\to} \bHH(\cA_{X \times X^a}) \Lte \bHH(\cA_{X^a \times X}) \stackrel{\underset{X^a \times X}{\cup}}{\longrightarrow} \C^\hbar.
$}
\end{equation*}

\begin{corollary} \label{ld}
Let $X$ be a compact complex manifold endowed with a DQ-algebroid $\cA_X$. The  diagram below commutes.
\begin{equation*}
\xymatrix{
\bHH(\cA_{X^a}) \Lte \bHH(\cA_X) \ar[r]^-{\underset{X}{\cup}} \ar[d]^-{\mathfrak{K}}& \C^\hbar\\
\bHH(\cA_{X^a \times X}) \ar[ru]_-{\zeta}.
}
\end{equation*}
\end{corollary}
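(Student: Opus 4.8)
The plan is to read off the statement as the special case $X_1=X_2=X$ of Proposition~\ref{prep}(ii), with the diagonal class inserted in the middle tensor factor. Since $X$ is compact, both Proposition~\ref{prep} and the K\"unneth isomorphism $\mathfrak{K}$ of Proposition~\ref{kunisodq}(ii) are available, so all the morphisms in the diagram are defined and $\mathfrak{K}$ is invertible. Throughout I use the paper's convention of identifying a class with the corresponding morphism out of $\C^\hbar$, so that $\hh(\Delta_X)=\hh_{X\times X^a}(\cC_X)\in\bHH(\cA_{X\times X^a})$ may be treated as an element.

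First I would unwind the definition of $\zeta$. By construction, the lower composite of the diagram, namely $\bHH(\cA_{X^a})\Lte\bHH(\cA_X)\stackrel{\mathfrak{K}}{\to}\bHH(\cA_{X^a\times X})\stackrel{\zeta}{\to}\C^\hbar$, equals
\[
\underset{X^a\times X}{\cup}\,\circ\,(\hh(\Delta_X)\otimes\id)\,\circ\,\mathfrak{K},
\]
that is, it carries $\alpha\otimes\beta$ to $\hh(\Delta_X)\underset{X^a\times X}{\cup}\mathfrak{K}(\alpha\otimes\beta)$. It therefore suffices to identify this expression with $\alpha\underset{X}{\cup}\beta$.

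Next I would specialise Proposition~\ref{prep}(ii) by setting $X_1=X_2=X$, so that the four corners read $\bHH(\cA_{1^a})=\bHH(\cA_{X^a})$, $\bHH(\cA_{12^a})=\bHH(\cA_{X\times X^a})$, $\bHH(\cA_2)=\bHH(\cA_X)$ and $\bHH(\cA_{1^a2})=\bHH(\cA_{X^a\times X})$, and the vertical arrow is $\id\otimes\mathfrak{K}$. Evaluating the commuting square on $\alpha\otimes\hh(\Delta_X)\otimes\beta$ (i.e.\ inserting $\hh(\Delta_X)$ in the middle factor) gives
\[
\alpha\underset{1}{\cup}\hh(\Delta_X)\underset{2}{\cup}\beta=\hh(\Delta_X)\underset{X^a\times X}{\cup}\mathfrak{K}(\alpha\otimes\beta),
\]
whose right-hand side is exactly the expression found above for $\zeta\circ\mathfrak{K}$. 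To collapse the left-hand side, note that $\underset{1}{\cup}$ and $\underset{2}{\cup}$ are taken over the disjoint factors $X_1$ and $X_2$, so the triple composition may be read as $(\alpha\underset{1}{\cup}\hh(\Delta_X))\underset{2}{\cup}\beta$; here $\alpha\underset{1}{\cup}\hh(\Delta_X)$ is precisely the right action of $\hh_{X\times X^a}(\cC_X)$ on $\bHH(\cA_{X^a})$, which is trivial by Proposition~\ref{preuveidcomp}, hence equals $\alpha$. The surviving pairing $\underset{2}{\cup}$ is then the evaluation $\underset{X}{\cup}\colon\bHH(\cA_{X^a})\Lte\bHH(\cA_X)\to\C^\hbar$, so the left-hand side is $\alpha\underset{X}{\cup}\beta$. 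Comparing the two sides proves the commutativity.

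The step I expect to be most delicate is purely the bookkeeping of indices: after the substitution $X_1=X_2=X$ one must verify that the operations $\underset{1}{\cup}$, $\underset{2}{\cup}$ and $\underset{1^a2}{\cup}$ of Proposition~\ref{prep} genuinely match $\underset{X}{\cup}$ and $\underset{X^a\times X}{\cup}$, and in particular that $\alpha\underset{1}{\cup}\hh(\Delta_X)$ is the right action to which Proposition~\ref{preuveidcomp} (the DQ-incarnation of Property~(P\ref{idcomp})) applies. Once these identifications are pinned down the argument is entirely formal, and indeed this corollary is nothing but the verification of Property~(P\ref{pairingdiag}) in the framework of DQ-modules.
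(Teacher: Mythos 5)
Your proposal is correct and follows essentially the same route as the paper: the paper also specialises Proposition~\ref{prep}(ii) to $X_1=X_2=X$ and inserts $\hh(\Delta_X)$ in the middle factor, presenting this as a diagram whose left square is tautological and whose right triangle is Proposition~\ref{prep}. The only difference is cosmetic: you make explicit the final collapse $\alpha\underset{X}{\cup}\hh(\Delta_X)=\alpha$ via Proposition~\ref{preuveidcomp}, a step the paper's proof leaves implicit, which is a sound (indeed clarifying) addition.
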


\begin{proof}
It follows from Proposition \ref{prep} with $X_1=X_2=X$, that the triangle on the right of the below diagram commutes. The commutativity of the square on the left is tautological.
\begin{equation*}
\scalebox{0.9}{\xymatrix{
\bHH(\cA_{X^a}) \Lte \C^\hbar \Lte \bHH(\cA_X) \ar[r]^-{\id \otimes \hh(\Delta_X) \otimes \id} \ar[d]^{\wr} &\bHH(\cA_{X^a}) \Lte \bHH(\cA_{X \times X^a}) \Lte \bHH(\cA_X) \ar[d] \ar[r]^-{\cdot \underset{X}{\cup} \cdot \underset{X}{\cup} \cdot}& \C^\hbar \\
\C^\hbar \Lte \bHH(\cA_{X^a}) \Lte \bHH(\cA_X) \ar[r]^-{\hh(\Delta_X) \otimes \mathfrak{K}}&\bHH(\cA_{X \times X^a}) \Lte \bHH(\cA_{X^a \times X}) \ar[ru]_-{\underset{X^a \times X}{\cup}} & 
}}
\end{equation*}%
\end{proof}

Finally, an important result is the Theorem 4.3.5 of \cite{KS3}: 

\begin{theorem}\label{Riemann-Roch}
Let $\Lambda_i$ be a closed subset of $X_i \times X_{i+1} \; (i=1 \; 2)$ and assume that $\Lambda_1 \times_{X_2} \Lambda_2$ is proper over $X_1 \times X_3$. Set $\Lambda= \Lambda_1 \circ \Lambda_2$. Let $\cK_i \in \Der^b_{\coh, \Lambda_i}(\cA_{X_i \times X^a_{i+1}}) (i=1,\; 2)$. Then

\begin{equation}
\hh_{X_{13^a}}(\cK_1 \circ \cK_2)=\hh_{X_{12^a}}(\cK_1) \underset{2}{\cup} \hh_{X_{23^a}}(\cK_2)
\end{equation}
as elements of $\HH_{\Lambda}^0(\cA_{X_1 \times X_{3^a}})$.
\end{theorem}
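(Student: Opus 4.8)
The plan is to read the Hochschild class as a categorical trace (as in Remark \ref{representation_Ch}) and to show that this trace is compatible with the two elementary operations out of which the composition $\cK_1 \underset{2}{\circ} \cK_2=\reim{p_{13}}(\cK_1\uLte_{\cA_2}\cK_2)$ is assembled: the external product $\ldetens$ and the integration along $X_2$ (contraction with the diagonal kernel $\cC_2$ followed by a proper direct image). Both sides of the claimed identity are images of identity endomorphisms under the Hochschild class map \eqref{map hoch}, and $\id_{\cK_1 \underset{2}{\circ} \cK_2}$ is induced from $\id_{\cK_1}$ and $\id_{\cK_2}$; hence it is enough to prove that the construction of $\hh$ carries the external composition of these identities to the cup product $\underset{2}{\cup}$. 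I would therefore prove a product formula and an integration formula, and then glue them by means of the associativity statements already at our disposal.

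\textbf{Step 1: product formula.} For $\cK_1\in\Der^b_{\coh}(\cA_{12^a})$ and $\cK_2\in\Der^b_{\coh}(\cA_{23^a})$, whose external product $\cK_1\ldetens\cK_2$ is a coherent kernel on $X_1\times X_2^a\times X_2\times X_3^a$, I would first establish
\[
\hh(\cK_1\ldetens\cK_2)=\hh(\cK_1)\underset{\pt}{\cup}\hh(\cK_2)
\]
under the K\"unneth isomorphism $\mathfrak{K}$ of Proposition \ref{kunisodq}. This is pure diagram-chasing inside the defining chain of \eqref{map hoch}: one uses that $\Du^\prime$ is compatible with $\ldetens$, that $\cC$ and $\w$ are multiplicative for $\ubtimes$, and that the canonical morphism $\cM\ubtimes\Du^\prime_{\cA}(\cM)\to\cC$ is an external product of the corresponding morphisms for $\cK_1$ and $\cK_2$. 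Corollary \ref{morph}, which identifies $\underset{\pt}{\cup}$ with the K\"unneth morphism \eqref{kunnethorigin}, is precisely what gives meaning to the right-hand side.

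\textbf{Step 2: integration formula.} By definition $\cK_1\underset{2}{\circ}\cK_2$ is obtained from $\cK_1\ldetens\cK_2$ by tensoring over $\cA_{22^a}$ with $\cC_2$ (this is the passage from $\ldetens$ to $\uLte_{\cA_2}$) and then applying $\reim{p_{13}}$. On Hochschild homology the pairing $\underset{2}{\cup}$ of Proposition \ref{pairing homology} is built from exactly the same data, through the integration morphism $K_{12}\underset{2^2}{\circ}K_{23}\to K_{13}$ of \eqref{integrationpairing}, itself constructed from the identification $\Omega_2^{\cA}\Lte_{\mathcal{D}_2^{\cA}}\cA_2[-d_2]\simeq\C^\hbar_2$. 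The heart of the argument is to verify that applying the Hochschild class construction to the contraction by $\cC_2$ and to the direct image reproduces this very integration morphism. I would carry this out by invoking the associativity of the composition of kernels (Proposition \ref{assodqmod}) to commute the contraction past the duality functors, and the projection formula (in the spirit of Proposition \ref{prep}) to handle $\reim{p_{13}}$; the bookkeeping is organised exactly as in the commutative diagrams \eqref{assoK} and \eqref{assoS} from the proof of Proposition \ref{associatif}.

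\textbf{Main obstacle and conclusion.} The delicate point is the matching in Step 2: under the isomorphism $\Hoc(\cA_{ij^a})\simeq\fRHom(S_{ij},K_{ij})$ one must check that the trace over the $X_2$-diagonal implicit in $\uLte_{\cA_2}$ coincides with the integration morphism \eqref{integrationpairing} defining $\underset{2}{\cup}$, keeping track of the dualizing shift $[2d_2]$ and of the properness of $\reim{p_{13}}$ on supports guaranteed by the hypothesis that $\Lambda_1\times_{X_2}\Lambda_2$ is proper over $X_1\times X_3$. Once this is secured at the level of the homology sheaves $\Hoc(\cA_{ij^a})$, taking global sections and combining Steps 1 and 2 yields $\hh(\cK_1\underset{2}{\circ}\cK_2)=\hh(\cK_1)\underset{2}{\cup}\hh(\cK_2)$. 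As a consistency check one may apply $\gr_\hbar$ to recover the classical functoriality of Hochschild classes for coherent sheaves, the full $\hbar$-formal statement then following from the conservativity of $\gr_\hbar$ on cohomologically complete modules, exactly as in Proposition \ref{kunisodq}.
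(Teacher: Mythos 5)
The paper does not actually prove this statement: Theorem \ref{Riemann-Roch} is quoted from Kashiwara--Schapira and its ``proof'' here is the citation \cite[p.~111]{KS3}. Your sketch is, in substance, a reconstruction of that source's argument: decompose $\cK_1 \underset{2}{\circ} \cK_2$ into the external product $\ldetens$ followed by contraction with $\cC_2$ and a proper direct image, prove the external-product formula $\hh(\cK_1 \ldetens \cK_2)=\hh(\cK_1)\underset{\pt}{\cup}\hh(\cK_2)$ under $\mathfrak{K}$, and then match the trace over the $X_2$-diagonal with the integration morphism (\ref{integrationpairing}), using the $S_{ij}$, $K_{ij}$ bookkeeping that this paper recalls in the proof of Proposition \ref{associatif}. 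So the architecture is the right one and agrees with the cited proof.

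Two caveats. First, the ingredient your Step 2 leaves implicit is precisely where coherence and the properness of $\Lambda_1 \times_{X_2} \Lambda_2$ over $X_1 \times X_3$ do their real work: one needs the canonical morphism $\Du^\prime_{\cA}(\cK_1) \underset{2}{\circ} \Du^\prime_{\cA}(\cK_2) \to \Du^\prime_{\cA}(\cK_1 \underset{2}{\circ} \cK_2)$ to be an isomorphism for coherent kernels with proper supports (see the duality results of \cite[Chap.~3]{KS3}), in order to identify the coevaluation $\Du^\prime(\cK)\Lte\cK \stackrel{\sim}{\leftarrow}\fRHom(\cK,\cK)$ for the composed kernel with the composite of the coevaluations of $\cK_1$ and $\cK_2$; ``commuting the contraction past the duality functors'' via the associativity of Proposition \ref{assodqmod} alone does not deliver this, and without it Step 2 does not close. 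Second, your final remark is incorrect as stated: conservativity of $\gr_\hbar$ on cohomologically complete modules tests whether a \emph{morphism} is an isomorphism (as in Proposition \ref{kunisodq}), but it cannot detect equality of two \emph{elements} of $\HH^0_{\Lambda}(\cA_{X_1\times X_{3^a}})$, since $\Hn^0(M) \to \Hn^0(\gr_\hbar M)$ is far from injective (its kernel contains $\hbar\,\Hn^0(M)$). The identity must therefore be established at the level of morphisms of kernels, as in your Steps 1--2; there is no shortcut through $\gr_\hbar$. Since you offer that only as a consistency check, the main line of the sketch stands, but the duality-composition isomorphism should be stated explicitly as the key lemma.
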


\begin{proof}
See \cite[p. 111]{KS3}.
\end{proof}

\section{A Lefschetz formula for DQ-modules}

\subsection{The monoidal category of DQ-algebroid stacks}\label{mono}

In this subsection we collect a few facts concerning the product $\cdot \underline{\boxtimes} \cdot$ of DQ-algebroids. Recall that if $X$ and $Y$ are two complex manifolds endowed with DQ-algebroids $\cA_X$ and $\cA_Y$, $X \times Y$ is canonically endowed with the DQ-algebroid $\cA_{X \times Y}:= \cA_X \underline{\boxtimes} \cA_Y$. There is a functorial symmetry isomorphism
\begin{align*}
\sigma_{X,Y}& : (X \times Y, \cA_{X \times Y}) \stackrel{\sim}{\to}  (Y \times X, \cA_{Y \times X})\\
\end{align*}%
and for any triple $(X, \cA_X)$, $(Y,\cA_Y)$ and $(Z, \cA_Z)$ there is a natural associativity isomorphism 
\begin{equation*}
\rho_{X,Y,Z}: (\cA_X \ubtimes \cA_Y) \ubtimes \cA_Z \stackrel{\sim}{\to} \cA_X \ubtimes (\cA_Y \ubtimes \cA_Z).
\end{equation*}
 
We consider the category $\mathscr{DQ}$ whose objects are the pairs $(X,\cA_X)$ where X is a complex manifold and $\cA_X$ a DQ-algebroid stack on $X$ and where the morphisms are obtained by composing and tensoring the identity morphisms, the symmetry morphisms and the associativity morphisms. The category $\mathscr{DQ}$ endowed with $\ubtimes$ is a symmetric monoidal category.

We denote by
\begin{small}
\begin{equation*}
v: ((X \times Y) \times (X \times Y)^a, \cA_{(X \times Y) \times (X \times Y)^a}) \to ((Y \times X) \times (Y \times X)^a,\cA_{(Y \times X) \times (Y \times X)^a}) 
\end{equation*}
\end{small}
the map defined by $v:=\sigma \times \sigma$. 

In this situation, after identifying, $(X \times X^a) \times (Y \times Y^a)$ with $(X \times Y) \times (X \times Y)^a$, there is a natural isomorphism $\cC_X \underline{\boxtimes} \cC_Y \simeq \cC_{X \times Y}$ and the morphism $v$ induces an isomorphism
\begin{equation*}
v_\ast(\cC_{X \times Y}) \simeq \cC_{Y \times X}.
\end{equation*}
\begin{proposition}
The map $\sigma_{X,Y}$ induce an isomorphism
\begin{equation} \label{morphismedecoherenceinv}
\sigma_{\ast}: \sigma_{X,Y \ast}(\Hoc(\cA_{X \times Y})) \to \Hoc(\cA_{Y \times X})
\end{equation}
\end{proposition}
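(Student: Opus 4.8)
The plan is to unwind the definition $\Hoc(\cA_W)=\delta_W^{-1}(\cC_{W^a}\Lte_{\cA_{W\times W^a}}\cC_W)$ and to track how the symmetry acts at each stage, so as to reduce the statement to the already-recorded isomorphism $v_\ast(\cC_{X\times Y})\simeq\cC_{Y\times X}$. Write $Z=X\times Y$ and $Z'=Y\times X$, so that $\sigma_{X,Y}\colon Z\to Z'$ while $v\colon Z\times Z^a\to Z'\times Z'^a$. First I would record the compatibility of $v$ with the diagonal embeddings, namely the commutative square
\[
v\circ\delta_Z=\delta_{Z'}\circ\sigma_{X,Y},
\]
which holds because $v=\sigma\times\sigma$ swaps the two factors of $Z$ (and of $Z^a=X^a\times Y^a$) simultaneously and therefore carries the diagonal of $Z$ onto the diagonal of $Z'$. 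Since $\sigma_{X,Y}$ and $v$ are isomorphisms of DQ-algebroid stacks, applying inverse image to this square and rearranging (using that both maps are invertible) yields a natural isomorphism of functors
\[
\sigma_{X,Y\ast}\circ\delta_Z^{-1}\simeq\delta_{Z'}^{-1}\circ v_\ast .
\]

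Next I would use that $v$ is an isomorphism of algebroid-ringed spaces with $v_\ast\cA_{Z\times Z^a}\simeq\cA_{Z'\times Z'^a}$, so that $v_\ast$ is monoidal for the relative tensor product, giving
\[
v_\ast\bigl(\cC_{Z^a}\Lte_{\cA_{Z\times Z^a}}\cC_Z\bigr)\simeq(v_\ast\cC_{Z^a})\Lte_{\cA_{Z'\times Z'^a}}(v_\ast\cC_Z).
\]
Combining this with the functor identity of the previous step produces
\[
\sigma_{X,Y\ast}\Hoc(\cA_Z)\simeq\delta_{Z'}^{-1}\bigl((v_\ast\cC_{Z^a})\Lte_{\cA_{Z'\times Z'^a}}(v_\ast\cC_Z)\bigr).
\]
It then remains to insert $v_\ast\cC_Z\simeq\cC_{Z'}$, which is precisely the stated isomorphism $v_\ast(\cC_{X\times Y})\simeq\cC_{Y\times X}$, together with its opposite counterpart $v_\ast\cC_{Z^a}\simeq\cC_{Z'^a}$, obtained by the same argument applied to the opposite algebroids. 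The right-hand side then becomes $\delta_{Z'}^{-1}(\cC_{Z'^a}\Lte_{\cA_{Z'\times Z'^a}}\cC_{Z'})=\Hoc(\cA_{Y\times X})$, which is the desired isomorphism $\sigma_\ast$.

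The step I expect to be the main obstacle is this last identification, because of the bookkeeping of the several swap-identifications of algebroids that are hidden in the definition of Hochschild homology: in particular the identification $(X\times Y)^a\simeq X^a\times Y^a$, and the way $\cC_{Z^a}$ is regarded as an $\cA_{Z\times Z^a}$-module through the symmetry interchanging $\cA_{Z\times Z^a}$ and $\cA_{Z^a\times Z}$. One must check that $v$ intertwines all of these compatibly, so that the monoidality of $v_\ast$ and the two isomorphisms $v_\ast\cC_Z\simeq\cC_{Z'}$, $v_\ast\cC_{Z^a}\simeq\cC_{Z'^a}$ genuinely assemble into the claimed isomorphism on $\Hoc$. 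Everything else is formal manipulation of inverse images and direct images along the isomorphisms $\sigma_{X,Y}$ and $v$.
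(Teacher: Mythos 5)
Your proof is correct and is essentially the paper's own argument: the paper also starts from the commutative (Cartesian) square $v\circ\delta_{X\times Y}=\delta_{Y\times X}\circ\sigma_{X,Y}$, exchanges the pushforward with $\delta^{-1}$ across it (phrased there as $\sigma_!\delta_{X\times Y}^{-1}\simeq\delta_{Y\times X}^{-1}v_!$, which agrees with your $\sigma_{X,Y\ast}\circ\delta_Z^{-1}\simeq\delta_{Z'}^{-1}\circ v_\ast$ since $\sigma$ and $v$ are isomorphisms), and then pushes $v_\ast$ through the tensor product using $v_\ast(\cC_{X\times Y})\simeq\cC_{Y\times X}$ together with its opposite counterpart. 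Your closing remark about the bookkeeping of the swap-identifications is exactly the content the paper leaves implicit in its last displayed isomorphism.
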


\begin{proof}
There is the following Cartesian square of topological space.
\begin{equation*}
\xymatrix{X \times Y \ar[d]^-{\delta_{X \times Y}} \ar[r]^-{\sigma}& Y \times X \ar[d]^-{\delta_{Y \times X}}\\
(X \times Y) \times (X \times Y) \ar[r]^-{v}& (Y \times X) \times (Y \times X).
}
\end{equation*}
Then 
\begin{align*}
\sigma_\ast \Hoc(\cA_{X \times Y})&\simeq \sigma_! \delta_{X \times Y}^{-1} (\cC_{(X \times Y)^a} \Lte_{\cA_{X \times Y}} \cC_{X \times Y})\\
                             &\simeq \delta_{Y \times X}^{-1} v_! (\cC_{(X \times Y)^a} \Lte_{\cA_{X \times Y}} \cC_{X \times Y})\\
                             &\simeq \delta^{-1}_{Y \times X}(\cC_{(Y \times X)^a} \Lte_{\cA_{Y \times X}} \cC_{Y \times X}).
\end{align*}
\end{proof}
The morphsim (\ref{morphismedecoherenceinv}) induces an isomorphism that we still denote $\sigma_\ast$
\begin{equation*}
\sigma_\ast :\bHH(\cA_{X \times Y}) \to \bHH(\cA_{Y \times X}).
\end{equation*}
The following diagram commutes
\begin{equation}
\xymatrix{ 
\bHH(\cA_{X \times Y}) \ar[r]^-{\sigma_\ast}& \bHH(\cA_{Y \times X})\\
\bHH(\cA_X) \Lte \bHH(\cA_Y) \ar[u]^-{\mathfrak{K}} \ar[r]& \bHH(\cA_Y) \Lte \bHH(\cA_X) \ar[u]^-{\mathfrak{K}}.
}
\end{equation}

\begin{proposition}\label{propiii} 
There is the equality
\begin{equation*}
\sigma_\ast\hh_{X \times X^a}(\cC_X)=\hh_{X^a \times X}(\cC_{X^a}).
\end{equation*}
\end{proposition}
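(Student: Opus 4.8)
The plan is to show the equality $\sigma_\ast\hh_{X \times X^a}(\cC_X)=\hh_{X^a \times X}(\cC_{X^a})$ by tracking how the isomorphism $\sigma_\ast$ of \eqref{morphismedecoherenceinv}, constructed from the symmetry $\sigma_{X,X^a}$, acts on the Hochschild class of $\cC_X$. The key observation is that the Hochschild class $\hh_{X \times X^a}(\cC_X)$ is \emph{functorial}: it is built from the diagonal $\cC_{X \times X^a}$ of the manifold $X \times X^a$ via the chain of morphisms defining $\hh$, and $\sigma_\ast$ is induced by the geometric symmetry morphism $v = \sigma \times \sigma$ together with the identification $v_\ast(\cC_{X \times Y}) \simeq \cC_{Y \times X}$ recorded just above the statement. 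So the heart of the matter is that applying $\sigma_\ast$ to the class of the diagonal of $X \times X^a$ produces the class of the diagonal of $X^a \times X$.

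First I would unwind both sides in terms of the map \eqref{map hoch} defining the Hochschild class. On the left, $\hh_{X \times X^a}(\cC_X)$ is the image of $\id_{\cC_X}$ under $\hh^0_{\cC_X}$ applied to the object $\cC_X \in \Der^b_{\coh}(\cA_{X \times X^a})$ — here one must be careful that the ambient manifold is $X \times X^a$, so that the relevant diagonal is $\delta_{X \times X^a}$ and the class lives in $\HH_0(\cA_{(X \times X^a) \times (X \times X^a)^a})$, which $\sigma_\ast$ identifies with a class over $X^a \times X$. On the right, $\hh_{X^a \times X}(\cC_{X^a})$ is the image of $\id_{\cC_{X^a}}$ under the analogous construction for the manifold $X^a \times X$. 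Then I would verify that the square
\begin{equation*}
\xymatrix{
\fRHom_{\cA_{(X \times X^a)^e}}(\cC_X,\cC_X) \ar[r]^-{\hh} \ar[d] & \Hoc(\cA_{(X \times X^a)^e}) \ar[d]^-{\sigma_\ast}\\
\fRHom_{\cA_{(X^a \times X)^e}}(\cC_{X^a},\cC_{X^a}) \ar[r]^-{\hh} & \Hoc(\cA_{(X^a \times X)^e})
}
\end{equation*}
commutes, where the left vertical arrow is the isomorphism $v_\ast(\cC_{X \times X^a}) \simeq \cC_{X^a \times X}$ transporting $\id_{\cC_X}$ to $\id_{\cC_{X^a}}$. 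Since the Hochschild class is defined purely through the structural morphisms $\Du'\!(\cM)\Lte\cM \stackrel{\sim}{\leftarrow} \fRHom(\cM,\cM)$, the exterior-product identification $\cC_{X^a} \Lte_{\cA} (\cM \ubtimes \Du'\cM) \to \cC_{X^a}\Lte_\cA \cC_X$, and the adjunction $\delta^{-1}$, the commutativity reduces to the naturality of all these operations with respect to the base-change by the geometric isomorphism $v$.

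The main obstacle, and the step deserving the most care, is the compatibility of the Cartesian square
\begin{equation*}
\xymatrix{X \times X^a \ar[d]^-{\delta} \ar[r]^-{\sigma}& X^a \times X \ar[d]^-{\delta}\\
(X \times X^a)^{\times 2} \ar[r]^-{v}& (X^a \times X)^{\times 2}
}
\end{equation*}
with the duality functor $\Du'_{\cA}$ and the product $\ubtimes$: one must check that $\sigma_{X,X^a}$ intertwines the duality of $\cA_X$-modules used in forming $\hh(\cC_X)$ with the duality of $\cA_{X^a}$-modules used in forming $\hh(\cC_{X^a})$, and that the canonical identification $\cC_X \ubtimes \cC_{X^a} \simeq \cC_{X \times X^a}$ is carried by $v_\ast$ to the corresponding identification over $X^a \times X$. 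Once these naturality statements are in place, the image of $1$ on the left travels along the top of the commuting square precisely to the image of $1$ on the right, which is exactly the asserted equality $\sigma_\ast\hh_{X \times X^a}(\cC_X)=\hh_{X^a \times X}(\cC_{X^a})$.
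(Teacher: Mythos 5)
Your strategy is sound, and it is worth saying up front how it relates to the paper: the paper's entire proof of this proposition is the one-line citation ``Immediate by using Lemma 4.1.4 of \cite{KS3}'', i.e., the compatibility of the Hochschild class with the symmetry isomorphism is outsourced to Kashiwara--Schapira, whereas you reconstruct that compatibility by hand. The underlying mechanism is the same in both cases --- naturality of every step of the chain defining $\hh$ (the duality, the exterior-product identification, the $\delta^{-1}$) with respect to the geometric isomorphism $v=\sigma\times\sigma$, together with the identification of pushed-forward diagonal bimodules --- so what your route buys is self-containedness: it makes explicit exactly which identifications are used, at the cost of the routine but lengthy naturality verifications that the citation hides. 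One point you should isolate explicitly, because it is the actual crux rather than generic naturality: the left vertical arrow of your square is induced not by the isomorphism $v_\ast(\cC_{X\times X^a})\simeq\cC_{X^a\times X}$ (that identification concerns the diagonal bimodule of the ambient manifold $Z=X\times X^a$, and it enters in $\Hoc(\cA_Z)$ and in the middle term of the chain) but by the separate isomorphism $\sigma_{X,X^a\,\ast}\,\cC_X\simeq\cC_{X^a}$, which holds because pushing $\delta_{X\ast}\cA_X$ forward along the flip exchanges the two module structures; it is this isomorphism that transports $\id_{\cC_X}$ to $\id_{\cC_{X^a}}$.

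Two notational slips should be corrected, neither fatal. First, $\hh_{X\times X^a}(\cC_X)$ lives in $\Hn^0_{\Supp(\cC_X)}(X\times X^a;\Hoc(\cA_{X\times X^a}))$, not in $\HH_0(\cA_{(X\times X^a)\times(X\times X^a)^a})$: the enveloping algebroid enters only inside the definition of $\Hoc(\cA_{X\times X^a})$, whose own Hochschild homology would be a different object. Accordingly, the columns of your commuting square should read $\fRHom_{\cA_{X\times X^a}}(\cC_X,\cC_X)\to\Hoc(\cA_{X\times X^a})$ and $\fRHom_{\cA_{X^a\times X}}(\cC_{X^a},\cC_{X^a})\to\Hoc(\cA_{X^a\times X})$, with the right vertical arrow the map $\sigma_\ast$ of the paper. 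Second, the duality used in forming $\hh_{X\times X^a}(\cC_X)$ is $\Du^\prime_{\cA_{X\times X^a}}$, duality of $\cA_{X\times X^a}$-modules, not duality of $\cA_X$-modules. With these corrections your argument goes through and amounts to a direct proof of the content of the cited lemma.
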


\begin{proof}
Immediate by using Lemma 4.1.4 of \cite{KS3}. 
\end{proof}
\subsection{The Lefschetz-Lunts formula for DQ-modules}

Inspired by the Lefschetz formula for Fourier-Mukai functor of V. Lunts (see \cite{Lunts}), we give a similar formula in the framework of DQ-modules. 

\begin{theorem} \label{preLunts}
Let $X$ be a compact complex manifold equiped with a DQ-algebroid $\cA_X$. Let $\lambda \in \HH_0(\cA_{X \times X^a})$. Consider the map (\ref{moratr}) 
\begin{equation*}
\Phi_\lambda: \bHH(\cA_X) \to \bHH(\cA_X).
\end{equation*}
Then
\begin{equation*}
\tr_{\C^\hbar}(\Phi_\lambda)= \hh_{X^a \times X}(\cC_{X^a}) \underset{X \times X^a}{\cup} \lambda.
\end{equation*}
\end{theorem}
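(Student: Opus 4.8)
The plan is to deduce Theorem \ref{preLunts} from the abstract Theorem \ref{formuleconclu} by checking that the category $\mathscr{DQ}$, together with the functor $\bHH(\cdot)$ into $\Der^b(\C^\hbar)$, provides an instance of the general framework of Section \ref{monoidalframework}. First I would fix the dictionary: the symmetric monoidal category $\mathscr{C}$ is $\mathscr{DQ}$ with product $\ubtimes$, the involutive monoidal functor $(\cdot)^a$ is the passage $(X,\cA_X) \mapsto (X^a, \cA_{X^a})$ to the opposite algebroid (which squares to the identity and fixes the unit $(\pt, \C^\hbar)$), and the functor $(L,\mathfrak{K})$ is $L = \bHH$ with $\mathfrak{K}$ the K\"unneth isomorphism (\ref{box morphism}) from Proposition \ref{kunisodq}(ii); this uses compactness of $X$ to guarantee $\mathfrak{K}$ is an isomorphism and that $\bHH(\cA_X) \in \Der^b_f(\C^\hbar)$. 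The operation $\underset{2}{\cup}$ is the composition morphism (\ref{pairing}), and $L_{\Delta_X}: \C^\hbar \to \bHH(\cA_{X \times X^a})$ is the map $\hh(\Delta_X)$ of (\ref{hhdelta}), whose image of $1_{\C^\hbar}$ is the Hochschild class $\hh_{X \times X^a}(\cC_X)$.

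Next I would verify the six properties (P\ref{compku})--(P\ref{switch}) in this concrete setting, for the most part by citing results already assembled in the excerpt. Property (P\ref{compku}) is exactly Corollary \ref{morph}, which identifies $\underset{\pt}{\cup}$ with the K\"unneth morphism. Property (P\ref{assol}) is Proposition \ref{associatif}(ii), the associativity of composition of Hochschild homology (here the compactness of all four factors is what makes the global statement available). Property (P\ref{viceversa}) is Proposition \ref{propiii}, the equality $\sigma_\ast \hh_{X \times X^a}(\cC_X) = \hh_{X^a \times X}(\cC_{X^a})$. Property (P\ref{idcomp}), that $\hh_{X \times X^a}(\cC_X)$ acts trivially on both $\bHH(\cA_X)$ and $\bHH(\cA_{X^a})$, is Proposition \ref{preuveidcomp}. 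Property (P\ref{pairingdiag}) is Corollary \ref{ld}, which is precisely the commutativity of the triangle relating $\underset{X}{\cup}$, $\mathfrak{K}$ and $\zeta$. The only remaining item is Property (P\ref{switch}), the compatibility of the pairing $\underset{X_1 \otimes X_2}{\cup}$ with the symmetry $\sigma$ on both factors; this I would verify directly from the construction of the composition morphism together with the isomorphism $v_\ast(\cC_{X \times Y}) \simeq \cC_{Y \times X}$ and the commutative square relating $\sigma_\ast$ and $\mathfrak{K}$ established in Section \ref{mono}.

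With all properties in hand, I would simply apply the second formula of Theorem \ref{formuleconclu}. In the abstract notation that formula reads $\tr(\Phi_\lambda) = L_{\Delta_{X^a}} \underset{X \otimes X^a}{\cup} \lambda$; translating $L_{\Delta_{X^a}}$ into $\hh_{X^a \times X}(\cC_{X^a})$ yields exactly
\begin{equation*}
\tr_{\C^\hbar}(\Phi_\lambda) = \hh_{X^a \times X}(\cC_{X^a}) \underset{X \times X^a}{\cup} \lambda,
\end{equation*}
which is the claim. One should also note that $\Phi_\lambda$ here coincides with the map $\Phi_\lambda$ of (\ref{philambda}) under the identifications made, and that the trace computed abstractly in $\Hom_{\Der^b(\C^\hbar)}(\C^\hbar, \C^\hbar)$ agrees with the categorical trace $\tr_{\C^\hbar}$ via Example \ref{example} and Remark \ref{representation_Ch}.

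I expect the main obstacle to be the verification of Property (P\ref{switch}) and, more subtly, the careful bookkeeping of the many canonical identifications---associativity and symmetry isomorphisms of $\ubtimes$, the identification of $(X \times X^a) \times (Y \times Y^a)$ with $(X \times Y) \times (X \times Y)^a$, and the compatibility of $\mathfrak{K}$ with these---so that the abstract diagrams of Section \ref{monoidalframework} genuinely match the geometric ones. The analytic hypotheses (compactness ensuring properness of the relevant projections and finiteness/cohomological completeness for $\mathfrak{K}$ to be an isomorphism) must be tracked throughout, but since $X$ is assumed compact these are automatically satisfied for every product appearing in the argument.
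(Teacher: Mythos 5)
Your proposal is correct and follows essentially the same route as the paper: instantiating the abstract framework of Section \ref{generalframe} with $\mathscr{C}$ the (compact) DQ-algebroid category, $L=\bHH$ with the K\"unneth isomorphism $\mathfrak{K}$, $\cup$ the composition (\ref{pairing}) and $L_{\Delta_X}=\hh(\Delta_X)$, then checking (P\ref{compku})--(P\ref{switch}) via exactly the results you cite (Corollary \ref{morph}, Proposition \ref{associatif}, Proposition \ref{propiii}, Proposition \ref{preuveidcomp}, Corollary \ref{ld}, and the construction of the pairing for (P\ref{switch})) before applying the second formula of Theorem \ref{formuleconclu}. The only cosmetic difference is that the paper explicitly passes to the full subcategory of $\mathscr{DQ}$ on compact manifolds, which you handle instead by tracking the compactness hypotheses throughout.
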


\begin{proof}
Consider the full subcategory $\sC$ of $\mathscr{DQ}$ whose objects are the pair $(X,\cA_X)$ where $X$ is a compact manifold. By the results of Subsection \ref{mono}, the pair $(\bHH,\mathfrak{K})$ is a symmetric monoidal functor.

The data are given by

\begin{enumerate}[(a)]
\item the functor $(\cdot)^a$ which associate to a DQ-algebroid $(X,\cA_X)$ the opposite DQ-algebroid $(X,\cA_{X^a})$,
\item the monoidal functor on $\sC$ given by the pair $(\bHH, \mathfrak{K})$,
\item the morphism (\ref{pairing}),
\item for each pair $(X, \cA_X)$ the morphism $\hh(\Delta_X)$. 
\end{enumerate}
  
We check the properties requested by of our formalism:
\begin{enumerate}[(i)]
\item the Property (P\ref{compku}) is granted by Corollary \ref{morph},
\item the Property (P\ref{assol}) follows from Proposition \ref{associatif},
\item the Property (P\ref{viceversa}) follows from Proposition \ref{propiii},
\item the Property (P\ref{idcomp}) follows from Proposition \ref{preuveidcomp},
\item the Property (P\ref{pairingdiag}) follows from Proposition \ref{ld},
\item the Property (P\ref{switch}) follows from the construction of the pairing.
\end{enumerate}
Then the formula follows from Theorem \ref{formuleconclu}.
\end{proof}

\begin{corollary}\label{almostfinal}
Let $X$ be a compact complex manifold endowed with a DQ-algebroid $\cA_X$ and let $\cK \in \Der^b_{\coh}(\cA_{X \times X^a})$. Then
\begin{equation*}
\tr_{\C^\hbar}(\Phi_\cK)= \hh_{X^a \times X}(\cC_{X^a}) \underset{X \times X^a}{\cup} \hh_{X \times X^a}(\cK).
\end{equation*}
\end{corollary}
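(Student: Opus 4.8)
The plan is to recognise Corollary \ref{almostfinal} as nothing more than the specialisation of Theorem \ref{preLunts} to the case where the class $\lambda$ is a Hochschild class of a coherent kernel. First I would recall the definition of $\Phi_\cK$ given in equation (\ref{Phik}): by construction $\Phi_\cK$ is obtained from the morphism (\ref{moratr}) precisely by the choice $\lambda = \hh_{X \times X^a}(\cK)$, so that $\Phi_\cK = \Phi_\lambda$ for this particular $\lambda$. Before I can feed this into the theorem I must check that $\hh_{X \times X^a}(\cK)$ genuinely defines an element of $\HH_0(\cA_{X \times X^a})$, as required in the hypotheses of Theorem \ref{preLunts}. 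This is where the compactness of $X$ enters: since $X$ is compact so is $X \times X^a$, and the supported Hochschild class $\hh_{X \times X^a}(\cK) \in \Hn^0_{\Supp(\cK)}(X \times X^a, \Hoc(\cA_{X \times X^a}))$ maps canonically to the absolute degree-zero group $\HH_0(\cA_{X \times X^a})$ via the morphism $\Rg_{\Supp(\cK)} \to \Rg$; I take its image there.

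Having made this identification, the proof is immediate: I substitute $\lambda = \hh_{X \times X^a}(\cK)$ directly into the conclusion of Theorem \ref{preLunts}, which yields
\begin{equation*}
\tr_{\C^\hbar}(\Phi_\cK) = \tr_{\C^\hbar}(\Phi_\lambda) = \hh_{X^a \times X}(\cC_{X^a}) \underset{X \times X^a}{\cup} \lambda = \hh_{X^a \times X}(\cC_{X^a}) \underset{X \times X^a}{\cup} \hh_{X \times X^a}(\cK).
\end{equation*}
No genuine obstacle arises in this passage, since all the categorical work (Properties (P\ref{compku}) through (P\ref{switch}), verified for $\mathscr{DQ}$ in the proof of Theorem \ref{preLunts}) has already been absorbed into the theorem. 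The only point requiring a moment's care is the bookkeeping of the first step, namely confirming that the ad hoc definition (\ref{Phik}) of $\Phi_\cK$ coincides with $\Phi_\lambda$ for the stated $\lambda$ and that this $\lambda$ lies in the correct Hochschild homology group; once that is settled the corollary is a formal consequence of the preceding theorem.
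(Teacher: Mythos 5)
Your proposal is correct and matches the paper's own proof, which is simply ``Apply Theorem \ref{preLunts} to $\Phi_\cK$''; your identification of $\Phi_\cK$ with $\Phi_\lambda$ for $\lambda = \hh_{X \times X^a}(\cK)$ is exactly the content of the definition in (\ref{Phik}). Your extra remark that the supported class maps to $\HH_0(\cA_{X \times X^a})$ is a reasonable bookkeeping check that the paper leaves implicit.
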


\begin{proof}
Apply Theorem \ref{preLunts} to $\Phi_\cK$.
\end{proof}

\begin{corollary}\label{Lunts}
Let $X$ be a compact complex manifold endowed with a DQ-algebroid $\cA_X$ and let $\cK \in \Der^b_{\coh}(\cA_{X \times X^a})$. Then
\begin{equation*}
\tr_{\C^\hbar}(\Phi_\cK)=\chi(\Rg(X \times X^a; \mathcal{C}_{X^a} \Lte_{\cA_{X \times X^a}} \cK)).
\end{equation*}
\end{corollary}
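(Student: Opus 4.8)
I want to prove
\[
\tr_{\C^\hbar}(\Phi_\cK)=\chi(\Rg(X \times X^a; \mathcal{C}_{X^a} \Lte_{\cA_{X \times X^a}} \cK)).
\]
The plan is to start from Corollary~\ref{almostfinal}, which already gives
\[
\tr_{\C^\hbar}(\Phi_\cK)= \hh_{X^a \times X}(\cC_{X^a}) \underset{X \times X^a}{\cup} \hh_{X \times X^a}(\cK),
\]
and then identify the right-hand side with the Euler characteristic of the stated complex. So the work is entirely in interpreting the pairing of two Hochschild classes as a trace in $\Der^b(\C^\hbar)$.

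**Main steps.** First I would unwind the definition of the composition $\underset{X \times X^a}{\cup}$ appearing in the formula. By Proposition~\ref{pairing homology} (with the three manifolds being a point, $X\times X^a$, and a point), this pairing takes two classes in $\bHH(\cA_{X\times X^a})$ and $\bHH(\cA_{X^a\times X})$ and produces an element of $\bHH(\cA_{\mathrm{pt}})\simeq\C^\hbar$; concretely it is built from the integration morphism over $X\times X^a$. The second step is to recognize that pairing the two Hochschild classes $\hh_{X^a\times X}(\cC_{X^a})$ and $\hh_{X\times X^a}(\cK)$ through this integration is exactly computing the Hochschild class of a composite kernel over a point. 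Invoking Theorem~\ref{Riemann-Roch} (the Riemann--Roch / functoriality of the Hochschild class with respect to composition) in the degenerate case where the outer manifolds are points, the cup product of the two classes equals $\hh_{\mathrm{pt}}\bl \cC_{X^a}\underset{X\times X^a}{\circ}\cK\br$, i.e.\ the Hochschild class of the object $\Rg(X\times X^a;\mathcal{C}_{X^a}\Lte_{\cA_{X\times X^a}}\cK)$ of $\Der^b_f(\C^\hbar)$.

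**Closing the argument.** Once the pairing is identified with $\hh_{\C^\hbar}(M)$ for the single complex $M:=\Rg(X\times X^a;\mathcal{C}_{X^a}\Lte_{\cA_{X\times X^a}}\cK)$, I invoke Remark~\ref{representation_Ch}: for $M\in\Der^b_f(\C^\hbar)$ the Hochschild class $\hh_{\C^\hbar}(M)=\hh_{\C^\hbar}(M,\id_M)$ is the categorical trace of the identity in the symmetric monoidal category $\Der^b(\C^\hbar)$. By Example~\ref{example}, the trace of the identity of a dualizable object is precisely $\chi(M)=\sum_i(-1)^i\dim_{\mathrm{F}(\C^\hbar)}\Hn^i(M)$. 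Here one needs $M$ to lie in $\Der^b_f(\C^\hbar)$, which holds because $X$ is compact and $\cK$ is coherent, so that the cohomologies are finitely generated over the integral domain $\C^\hbar=\C[[\hbar]]$; this legitimizes applying $\chi$. Chaining these identifications yields the claimed equality.

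**Expected obstacle.** The delicate point is the bookkeeping in the first two steps: checking that the integration morphism defining $\underset{X\times X^a}{\cup}$ genuinely matches the composition of kernels $\cC_{X^a}\underset{X\times X^a}{\circ}\cK$ over a point, and in particular that $\cC_{X^a}\Lte_{\cA_{X\times X^a}}\cK$ is the correct integrand after the various identifications of $\w$'s and $\cC$'s (as in the construction of morphism~(\ref{integrationpairing}) and Corollary~\ref{morph}). The properness/compactness hypothesis on $X$ is what guarantees $Rp_!=Rp_*$ and keeps everything in $\Der^b_f(\C^\hbar)$; I would verify that the supports meet the hypothesis~(\ref{hypo}) trivially here since the target is a point. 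The remaining manipulations are formal consequences of the already-established functoriality of $\hh$ and the trace formalism, so I do not expect difficulty beyond this identification.
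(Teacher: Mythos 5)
Your proposal is correct and follows essentially the same route as the paper: Corollary~\ref{almostfinal}, then Theorem~\ref{Riemann-Roch} in the degenerate case $X_1=X_3=\pt$, $X_2=X\times X^a$ to identify the pairing with $\hh_{\pt}\bigl(\Rg(X\times X^a;\mathcal{C}_{X^a}\Lte_{\cA_{X\times X^a}}\cK)\bigr)$, and finally Remark~\ref{representation_Ch} to read this Hochschild class as the trace, i.e.\ as $\chi$. Your added checks (that the complex lies in $\Der^b_f(\C^\hbar)$ by compactness and coherence, and that hypothesis~(\ref{hypo}) is trivially satisfied) are sound elaborations of details the paper leaves implicit.
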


\begin{proof}
By Corollary \ref{almostfinal}, we get that
\begin{equation*}
\tr_{\C^\hbar}(\Phi_\cK)= \hh_{X^a \times X}(\cC_{X^a}) \underset{X \times X^a}{\cup} \hh_{X \times X^a}(\cK).
\end{equation*}

Applying Theorem \ref{Riemann-Roch} with $X_1=X_3=\pt$ and $X_2=X \times X^a$ we find that
\begin{equation*}
\hh_{X^a \times X}(\cC_{X^a}) \underset{X \times X^a}{\cup} \hh_{X \times X^a}(\cK)=\hh_{\pt}(\Rg(X\times X^a; \mathcal{C}_{X^a} \Lte_{\cA_{X \times X^a}} \cK).
\end{equation*} 
By Remark \ref{representation_Ch}, it follows that 
\begin{equation*}
\hh_{\pt}(\Rg(X\times X^a; \mathcal{C}_{X^a} \Lte_{\cA_{X \times X^a}} \cK)= \chi(\Rg(X \times X^a; \mathcal{C}_{X^a} \Lte_{\cA_{X \times X^a}} \cK)). 
\end{equation*}
Finally, we get that $\tr_{\C^\hbar}(\Phi_\cK)=\chi(\Rg(X\times X^a; \mathcal{C}_{X^a} \Lte_{\cA_{X \times X^a}} \cK))$.
\end{proof}

\subsection{Applications}
We give some consequences of Theorem \ref{preLunts} and explain how to recover some of the results of the paper \cite{Lunts} of V. Lunts and give a special form of the formula when $X$ is also symplectic.

\begin{theorem}[\cite{Lunts}] \label{lunts1}
Let $X$ be a compact complex manifold and $\cK$ an object of $\Der^b_{\coh}(\cO_{X \times X})$. Then,
\begin{equation*}
\sum_i (-1)^i \tr(\Hn^i(\Phi_\cK))=\chi(\Rg(X \times X ; \mathcal{O}_{X} \Lte_{\cO_{X \times X}} \cK)).
\end{equation*}
\end{theorem}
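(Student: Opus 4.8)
The goal is to recover Lunts's classical Lefschetz formula (Theorem \ref{lunts1}) from the DQ-module version established in Corollary \ref{Lunts}. The plan is to realize the commutative setting as a degenerate (``classical limit'') case of the deformation-quantization setting and then apply the graded functor $\gr_\hbar$ to pass from $\C^\hbar$-coefficients to $\C$-coefficients.

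First I would take the trivial DQ-algebroid on $X$, namely $\cA_X = \cO_X[[\hbar]]$ with the ordinary (undeformed) product, so that $\cA_{X \times X^a}$ is likewise the trivial deformation of $\cO_{X \times X}$. For such a trivial algebroid the Hochschild homology $\Hoc(\cA_X)$ is the $\hbar$-adic completion of the classical Hochschild homology complex of $\cO_X$, and applying $\gr_\hbar$ recovers $\Hoc(\cO_X)$ in $\Der^b(\C_X)$. The kernel $\cK \in \Der^b_{\coh}(\cO_{X \times X})$ of the classical statement is promoted to an object of $\Der^b_{\coh}(\cA_{X \times X^a})$ by the same completion, and its associated endofunctor $\Phi_\cK$ of Definition~(\ref{Phik}) reduces, after applying $\gr_\hbar$, to the classical Fourier--Mukai-type action of $\cK$ on $\bigoplus_i \HH_i(\cO_X)$.

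Next I would invoke Corollary \ref{Lunts} with this trivial algebroid to obtain
\begin{equation*}
\tr_{\C^\hbar}(\Phi_\cK)=\chi\bigl(\Rg(X \times X^a;\, \cC_{X^a} \Lte_{\cA_{X \times X^a}} \cK)\bigr).
\end{equation*}
On the right-hand side, since the algebroid is the trivial deformation, $\cC_{X^a} \Lte_{\cA_{X \times X^a}} \cK$ is the $\hbar$-completion of $\cO_X \Lte_{\cO_{X \times X}} \cK$, so after $\gr_\hbar$ the Euler characteristic $\chi$ over $\C^\hbar$ becomes exactly $\chi(\Rg(X \times X;\, \cO_X \Lte_{\cO_{X \times X}} \cK))$, which is the right-hand side of Theorem \ref{lunts1}. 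On the left-hand side, by Remark \ref{representation_Ch} the trace $\tr_{\C^\hbar}(\Phi_\cK)$ is the alternating sum $\sum_i (-1)^i \tr(\Hn^i(\id_{\mathrm{F}(\C^\hbar)} \otimes \Phi_\cK))$ computed in $\Der^b_f(\C^\hbar)$, as spelled out in Example \ref{example}; identifying this graded trace with $\sum_i (-1)^i \tr(\Hn^i(\Phi_\cK))$ over $\C$ yields the left-hand side of Theorem \ref{lunts1}.

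The main obstacle is making the comparison between the DQ-theoretic trace over $\C^\hbar$ and the classical graded trace over $\C$ fully rigorous: one must check that $\gr_\hbar$ commutes with the formation of the trace map of Definition \ref{trace} (equivalently, with coevaluation, the swap $\sigma$, and evaluation in the dual-pair structure of Lemma \ref{thisisadual}), and that under $\gr_\hbar$ the endomorphism $\Phi_\cK$ of the cohomologically complete module $\bHH(\cA_X)$ specializes to the classical action on $\Hoc(\cO_X)$. This is where the hypotheses on $\C^\hbar$ (Noetherian, $\hbar$ not a zero-divisor, passage to the fraction field $\mathrm{F}(\C^\hbar)$) are used, exactly as in Example \ref{example}, to guarantee that the categorical trace computes the expected alternating sum of ordinary traces of the induced maps on cohomology. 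Once this specialization is verified, both sides of Corollary \ref{Lunts} descend term-by-term to the two sides of Theorem \ref{lunts1}, completing the proof.
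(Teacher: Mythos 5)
Your proposal is correct and takes essentially the same approach as the paper: the paper's proof likewise endows $X$ with the trivial deformation, applies Corollary \ref{Lunts}, and then forgets $\hbar$ by applying $\gr_\hbar$. Your write-up actually supplies more detail than the paper's two-line sketch (the $\hbar$-adic lift of $\cK$ and the trace comparison via Remark \ref{representation_Ch} and Example \ref{example}), which the paper leaves implicit.
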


\begin{proof}
We endow $X$ with the trivial deformation. Then, we can apply Corollary \ref{Lunts} and forget $\hbar$ by applying $\gr_\hbar$. We recover Theorem 3.9 of \cite{Lunts}.
\end{proof}

\begin{proposition}
Let $X$ be a compact complex manifold endowed with a DQ-algebroid $\cA_X$ and let $\cK \in \Der^b_{\coh}(\cA_{X \times X^a})$. Then
\begin{equation*}
\tr(\Phi_\cK)=\tr(\Phi_{\gr_\hbar \cK}).
\end{equation*}
\end{proposition}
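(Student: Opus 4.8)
The plan is to reduce the statement to Corollary~\ref{Lunts}, which already expresses the trace as an Euler characteristic, and then to compare the two sides after applying the graded functor $\gr_\hbar$. First I would invoke Corollary~\ref{Lunts} twice: once for the DQ-module $\cK$ over $\cA_X$, giving
\begin{equation*}
\tr(\Phi_\cK)=\chi(\Rg(X \times X^a; \mathcal{C}_{X^a} \Lte_{\cA_{X \times X^a}} \cK)),
\end{equation*}
and once for the classical kernel $\gr_\hbar\cK$ over the trivial deformation, which (as in the proof of Theorem~\ref{lunts1}) yields a parallel expression. Since $\tr(\Phi_{\gr_\hbar\cK})$ is computed in $\Der^b(\C)$ while $\tr(\Phi_\cK)$ lives in $\Der^b(\C^\hbar)$, the comparison must go through $\gr_\hbar$.

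The key step is therefore to show that $\gr_\hbar$ is compatible with the relevant operations, so that
\begin{equation*}
\gr_\hbar\bl \Rg(X \times X^a; \mathcal{C}_{X^a} \Lte_{\cA_{X \times X^a}} \cK)\br \simeq \Rg(X \times X; \mathcal{O}_{X} \Lte_{\cO_{X \times X}} \gr_\hbar\cK),
\end{equation*}
and in particular that the Euler characteristic is unchanged. Here I would use that $\gr_\hbar\cA_X \simeq \cO_X$, that $\gr_\hbar\cC_X \simeq \cO_{\Delta_X}$, that $\gr_\hbar$ commutes with the derived tensor product and with $\Rg$ on a compact manifold, and the cohomological completeness of the modules involved (as already exploited in the proof of Proposition~\ref{kunisodq}). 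Because $\chi$ counts alternating dimensions of cohomology over the fraction field and the complexes in question are perfect, passing to $\gr_\hbar$ preserves the Euler characteristic: the graded pieces of a $\hbar$-torsion-free cohomologically complete complex recover the classical cohomology dimension by dimension.

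The main obstacle I anticipate is the analytic bookkeeping needed to justify the commutation of $\gr_\hbar$ with $\Rg$ and with $\Lte$ simultaneously, since $\cK$ need not be $\hbar$-flat and $\gr_\hbar$ is only well-behaved on cohomologically complete modules. I would handle this by reducing to the coherent, cohomologically complete setting supplied by \cite{KS3}, where $\gr_\hbar$ is conservative and exact enough to commute with the derived operations, and then observe that the Euler characteristic depends only on the graded object. Once the identification of the two integrands under $\gr_\hbar$ is in place, the equality $\chi(\Rg(X \times X^a; \mathcal{C}_{X^a} \Lte_{\cA_{X \times X^a}} \cK)) = \chi(\Rg(X \times X; \mathcal{O}_{X} \Lte_{\cO_{X \times X}} \gr_\hbar\cK))$ is immediate, and comparing with Corollary~\ref{Lunts} applied to $\gr_\hbar\cK$ yields $\tr(\Phi_\cK)=\tr(\Phi_{\gr_\hbar\cK})$, completing the proof.
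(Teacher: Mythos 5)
Your proposal follows essentially the same route as the paper: the paper's proof consists precisely of remarking that $\chi(\RHom_{\cA_X}(\w_X^{-1},\cK))=\chi(\RHom_{\gr_\hbar \cA_X}(\gr_\hbar \w_X^{-1},\gr_\hbar \cK))$ and then citing Corollary \ref{Lunts} and Theorem \ref{lunts1}, which is exactly your reduction of both traces to Euler characteristics compared through $\gr_\hbar$. One small repair to your justification: the cohomology is \emph{not} recovered dimension by dimension in general (there may be $\hbar$-torsion), but this is harmless, since for a perfect complex over $\C[[\hbar]]$ the torsion contributes zero both to $\chi$ computed over $\C((\hbar))$ and to $\chi$ of the reduction mod $\hbar$, so the Euler characteristic equality you need still holds.
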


\begin{proof}
Remark that 
\begin{equation*}
\chi(\RHom_{\cA_X}(\w_X^{-1}, \cK))=\chi(\RHom_{\gr_\hbar \cA_X}((\gr_\hbar \w_X^{-1}), \gr_\hbar \cK)). 
\end{equation*}
Then, the result follows by Corollary \ref{Lunts} and Theorem \ref{lunts1}.
\end{proof}

It is possible to localize $\cA_X$ with respect to $\hbar$. We denote by $\C((\hbar))$ the field of formal Laurent series. We set $\cA_X^{loc}=\C((\hbar)) \otimes \cA_X$. If $\cM$ is a $\cA_X$-module we denote by $\cM^{loc}$ the $\cA_X^{loc}$-module $\C((\hbar)) \otimes \cM$.

\begin{corollary}
Let $X$ be a compact complex manifold endowed with a DQ-algebroid $\cA_X$ and let $\cK \in \Der^b_{\coh}(\cA_{X \times X^a})$. Then,
\begin{equation*}
\sum_i (-1)^i \tr(\Hn^i(\Phi_\cK))=\int_X \delta^\ast \ch(\gr_\hbar \cK) \cup \td_X(TX)
\end{equation*}
where $\ch(\gr_\hbar \cK)$ is the Chern class of $\gr_\hbar \cK$, $\td_X(TX)$ is the Todd class of the tangent bundle $TX$ and $\delta^\ast$ is the pullback by the diagonal embedding.
\end{corollary}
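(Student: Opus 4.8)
The plan is to reduce the left-hand side---a trace taken over the field of Laurent series $\C((\hbar))$---to the classical Euler characteristic occurring in Lunts's theorem, and then to evaluate that Euler characteristic by Grothendieck--Riemann--Roch on $X$. Everything preceding the Riemann--Roch step merely assembles results already established; the genuinely new input is this final computation.

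First I would identify the left-hand side with a categorical trace. Since $X$ is compact, after inverting $\hbar$ the complex $\bHH(\cA_X)^{loc}$ lies in $\Der^b_f(\C((\hbar)))$, so that Example \ref{example}, applied with $k=\C^\hbar$ (an integral domain with fraction field $\C((\hbar))$), gives
\begin{equation*}
\sum_i (-1)^i \tr(\Hn^i(\Phi_\cK)) = \tr_{\C^\hbar}(\Phi_\cK).
\end{equation*}
By the preceding proposition this equals $\tr(\Phi_{\gr_\hbar \cK})$, where $\gr_\hbar \cK \in \Der^b_{\coh}(\cO_{X \times X})$ is an ordinary coherent complex obtained via the identification $\gr_\hbar \cA_{X \times X^a} \simeq \cO_{X \times X}$. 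Invoking Theorem \ref{lunts1} of Lunts I then obtain
\begin{equation*}
\sum_i (-1)^i \tr(\Hn^i(\Phi_\cK)) = \chi\bigl(\Rg(X \times X; \delta_\ast\cO_X \Lte_{\cO_{X \times X}} \gr_\hbar \cK)\bigr),
\end{equation*}
where $\delta_\ast\cO_X$ denotes the structure sheaf of the diagonal.

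It remains to evaluate this Euler characteristic, which is the heart of the argument. I would apply the projection formula for the closed embedding $\delta\colon X \to X \times X$ to obtain $\delta_\ast\cO_X \Lte_{\cO_{X \times X}} \gr_\hbar \cK \simeq \delta_\ast(\mathrm{L}\delta^\ast \gr_\hbar \cK)$; since $\delta$ is proper this yields
\begin{equation*}
\Rg(X \times X; \delta_\ast\cO_X \Lte_{\cO_{X \times X}} \gr_\hbar \cK) \simeq \Rg(X; \mathrm{L}\delta^\ast \gr_\hbar \cK).
\end{equation*}
Hirzebruch--Riemann--Roch on the compact manifold $X$ then gives $\chi(X; \mathrm{L}\delta^\ast \gr_\hbar \cK) = \int_X \ch(\mathrm{L}\delta^\ast \gr_\hbar \cK) \cup \td_X(TX)$, and the functoriality of the Chern character under pullback, $\ch(\mathrm{L}\delta^\ast \gr_\hbar \cK) = \delta^\ast \ch(\gr_\hbar \cK)$, produces the announced formula. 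I expect the main obstacle to be nothing conceptual but rather the bookkeeping of this last step in the analytic (rather than algebraic) category: one must ensure that Hirzebruch--Riemann--Roch, the projection formula, and the compatibility of $\ch$ with $\delta^\ast$ are all available for an arbitrary bounded coherent complex on a compact complex manifold, and that the graded identifications $\gr_\hbar(\delta_\ast\cO_X)\simeq \delta_\ast\cO_X$ and $\gr_\hbar \cA_{X \times X^a} \simeq \cO_{X \times X}$ are compatible with the coherent structures at play.
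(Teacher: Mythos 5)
Your proof is correct, and it reaches the formula by a genuinely different route in the final, substantive step. The paper stays on the quantized side: from Corollary \ref{Lunts} it writes $\tr_{\C^\hbar}(\Phi_\cK)=\chi(\RHom_{\cA_X}(\w_X^{-1},\cK))$, observes that $\chi$ is unchanged by localization at $\hbar$, and then invokes Kashiwara--Schapira's Riemann--Roch theorem for DQ-modules (\cite[Cor.~5.3.5]{KS3}) to produce $\int_{X\times X}\ch(\delta_\ast\cO_X)\cup\ch(\gr_\hbar\cK)\cup\td_{X\times X}(T(X\times X))$, which is converted into the stated integral over $X$ by Grothendieck--Riemann--Roch applied to $\delta_\ast\cO_X$ together with the cohomological projection formula. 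You instead pass to the commutative side at the outset, via the proposition $\tr(\Phi_\cK)=\tr(\Phi_{\gr_\hbar\cK})$ and Theorem \ref{lunts1} --- both of which precede the corollary and are themselves consequences of Corollary \ref{Lunts}, so there is no circularity --- and then evaluate $\chi(\Rg(X\times X;\delta_\ast\cO_X\Lte_{\cO_{X\times X}}\gr_\hbar\cK))$ by the projection formula along the closed embedding $\delta$ followed by Hirzebruch--Riemann--Roch on $X$ itself. What each buys: your version bypasses the localization $(\cdot)^{loc}$ and the Euler-class machinery of \cite{KS3} entirely, at the price of invoking Riemann--Roch for arbitrary bounded coherent complexes on a possibly non-algebraic compact complex manifold, together with $\ch(\mathrm{L}\delta^\ast(\cdot))=\delta^\ast\ch(\cdot)$ --- the O'Brian--Toledo--Tong-type inputs you rightly flag as the delicate point, since such complexes need not admit global locally free resolutions; the paper's route needs analytic inputs of exactly the same nature (GRR for $\delta$ on $X\times X$), but outsources the hard Riemann--Roch content to \cite[Cor.~5.3.5]{KS3}. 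After unwinding, the two computations agree, since $\ch(\delta_\ast\cO_X)\cup\td_{X\times X}(T(X\times X))=\delta_\ast\td_X(TX)$ and the projection formula yield the same integrand $\delta^\ast\ch(\gr_\hbar\cK)\cup\td_X(TX)$.
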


\begin{proof}
By Corollary \ref{Lunts}, we have $\tr(\Phi_\cK)= \chi(\RHom_{\cA_X}(\w_X^{-1}, \cK))$ and 
\begin{equation*}
\chi(\RHom_{\cA_X}(\w_X^{-1}, \cK))=\chi(\RHom_{\cA^{loc}_X}((\w_X^{-1})^{loc}, \cK^{loc})). 
\end{equation*}
By Corollary 5.3.5 of \cite{KS3}, we have
\begin{equation*}
\scalebox{0.94}{$
\chi(\RHom_{\cA^{loc}_X}((\w_X^{-1})^{loc}, \cK^{loc}))=\int_{X \times X} \ch(\delta_\ast \mathcal{O}_X) \cup \ch(\gr_\hbar \cK) \cup \td_{X\times X}(T(X \times X)).$}
\end{equation*}
Applying the Grothendieck-Riemann-Roch theorem, we have
\begin{align*}
\sum_i (-1)^i \tr(\Hn^i(\Phi_\cK))&=\int_X \ch(\gr_\hbar \cK) \cup \delta_\ast \td_{X}(TX)\\
&=\int_X \delta^\ast \ch(\gr_\hbar \cK) \cup \td_{X}(TX).
\end{align*}%
\end{proof}

We denote by $d_X$ the complex dimension of $X$. In the symplectic case, we have according to \cite[§6.3]{KS3} 
\begin{theorem}
If $X$ is a complex symplectic manifold, the complex $\Hoc(\cA_X^{loc})$ is concentrated in degree $-d_X$ and there is a canonical isomorphism
\begin{equation*}
\tau_X :\Hoc(\cA_X^{loc}) \stackrel{\sim}{\underset{\tau_X}{\to}} \C_X^{\hbar, loc} \lbrack d_X \rbrack.
\end{equation*} 
\end{theorem}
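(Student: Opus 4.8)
The plan is to reduce the assertion to a local computation in a Darboux chart and then to glue the resulting trivializations. First I would note that both members of the statement are complexes of sheaves on $X$ and that $\Hoc(\cA_X^{loc})$ is defined by a local formula, so that, since $\tau_X$ is required to be canonical, it suffices to construct the isomorphism over the members of an open cover and to verify that the local trivializations agree on overlaps. By the Darboux theorem for complex symplectic manifolds, every point of $X$ has a neighbourhood symplectomorphic to an open subset of $\C^{2n}$ (in particular $d_X=2n$ is even), and on such a chart $\cA_X$ becomes isomorphic to the standard Weyl (Moyal) deformation $(\cO_X[[\hbar]],\star)$; thus the local model is the sheaf-theoretic analogue of the Weyl algebra.

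Next I would rewrite the Hochschild homology by means of the isomorphism $\Hoc(\cA_X)\simeq \fRHom_{\cA_{X\times X^a}}(\w_X^{-1},\cC_X)$ of \eqref{isohocfais} and resolve $\cC_X=\delta_{X\ast}\cA_X$ by a Koszul-type complex of free $\cA_{X\times X^a}$-modules. The diagonal $\delta_X\colon X\hookrightarrow X\times X^a$ has codimension $d_X$, so this resolution has length $d_X$; applying $\fRHom_{\cA_{X\times X^a}}(\w_X^{-1},-)$ and $\delta_X^{-1}$ yields a complex concentrated in degrees $-d_X,\dots,0$ that computes $\Hoc(\cA_X)$. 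At the level of associated graded one recovers the classical Hochschild--Kostant--Rosenberg answer $\gr_\hbar\Hoc(\cA_X)\simeq\bigoplus_p\Omega_X^p[p]$, which is genuinely spread over all degrees because the quantum corrections to the Koszul differentials are divisible by $\hbar$ and therefore vanish modulo $\hbar$.

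The decisive step is the localization $\cA_X\rightsquigarrow\cA_X^{loc}=\C((\hbar))\otimes\cA_X$, where the non-degeneracy of the symplectic form enters. The point is that the same differentials which become $0$ in $\gr_\hbar$ are in fact of the form $\hbar$ times the contraction maps furnished by $\omega$; these contractions are invertible precisely because $\omega$ is non-degenerate, so after inverting $\hbar$ every differential except the topmost becomes an isomorphism. Consequently the localized complex is exact off degree $-d_X$, whence $\Hoc(\cA_X^{loc})$ is concentrated in degree $-d_X$ and its cohomology there is a rank-one locally free $\C_X^{\hbar,loc}$-module; choosing the generator produced by this collapse gives the local form of $\tau_X$.

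The main obstacle is the canonicity of $\tau_X$, that is, the independence of the trivialization from the chosen symplectomorphism and star-product, which is what allows the local isomorphisms to glue. I would handle this by exhibiting the generator of the top cohomology intrinsically rather than chart-by-chart: its leading symbol is proportional to the Liouville volume $\omega^{\wedge n}$ (with $d_X=2n$), a nowhere-vanishing global section by non-degeneracy of $\omega$, and the generator itself is built only from the symplectic form and the dualizing complex $\w_X$, hence is manifestly independent of the local choices. Being a nowhere-vanishing intrinsic section of the rank-one sheaf $\Hn^{-d_X}(\Hoc(\cA_X^{loc}))$, it trivializes this sheaf globally and produces the asserted canonical isomorphism $\tau_X\colon\Hoc(\cA_X^{loc})\stackrel{\sim}{\to}\C_X^{\hbar,loc}[d_X]$.
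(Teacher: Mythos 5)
Note first that the paper contains no proof of this theorem: it is quoted from Kashiwara--Schapira \cite{KS3} (\S 6.2--6.3), so your attempt can only be measured against that argument. Your overall architecture --- a local computation in a Darboux chart where $\cA_X$ becomes the Moyal star algebra, followed by a canonicity argument allowing the local trivializations to glue --- does match the spirit of loc.\ cit., but the decisive local step is wrong as stated. In any complex two consecutive differentials compose to zero, so ``after inverting $\hbar$ every differential except the topmost becomes an isomorphism'' is impossible unless essentially all terms vanish. What is true is that the nondegeneracy of $\omega$ enters through the symplectic duality $\Lambda^k T X \simeq \Omega_X^{d_X-k}$ (\`a la Brylinski), which identifies the localized Koszul/Hochschild complex with a holomorphic de Rham type complex whose differential has the form $\hbar d + O(\hbar^2)$; acyclicity outside degree $-d_X$ is then the holomorphic Poincar\'e lemma, not invertibility of contraction operators. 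The concentration statement survives, but by a genuinely different mechanism from the one you propose, and your version of the step would fail if taken literally.

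The more serious gap is canonicity. Even granting the local collapse, your computation only shows that $\Hn^{-d_X}(\Hoc(\cA_X^{loc}))$ is a rank-one local system over $\C^{\hbar,loc}$, and rank-one local systems need not be trivial: ``choosing the generator produced by this collapse'' chart by chart does not glue unless one proves that the transition factors --- automorphisms of the localized star algebra comparing two Moyal presentations --- act trivially on top Hochschild homology (inner automorphisms act trivially, and the residual symbol-level ambiguity must be killed by an explicit normalization). Your appeal to an ``intrinsic generator built only from $\omega$ and $\w_X$'' asserts exactly what has to be proved: a section whose leading term is $\omega^{\wedge n}$ is well defined only modulo $\hbar$, and nothing in your argument excludes $\hbar$-dependent monodromy. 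This is precisely the content of \cite{KS3}, \S 6.2--6.3, where concentration is obtained not from an explicit Koszul resolution but from the theory of simple holonomic modules ($\cC_X^{loc}$ is simple holonomic along the Lagrangian diagonal of $X \times X^a$, and $\fRHom$ between simple holonomic modules along a smooth Lagrangian is concentrated in a single degree and of rank one), after which $\tau_X$ is constructed and normalized; the present paper deliberately cites this rather than reproving it.
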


We refer the reader to section 6.2 and 6.3 of \cite{KS3} for a precise description of $\tau_X$. According to \cite[Definition 6.3.2]{KS3}, the Euler class of a $\cA_X^{loc}$-module is defined by
\begin{definition}
Let $\cM \in \Der_{\coh}^b(\cA_X^{loc})$. We set
\begin{equation*}
\eu(\cM)=\tau_X(\hh_X(\cM)) \in \Hn^{d_X}_{\Supp(\cM)}(X;\C_X)
\end{equation*}
and call $\eu_X(\cM)$ the Euler class of $\cM$.
\end{definition}

Therefore, we have the following

\begin{proposition}
Let $X$ be a compact complex symplectic manifold and let $\cK \in \Der^b_{\coh}(\cA_{X \times X^a})$. Then,
\begin{equation*}
\sum_i (-1)^i \tr(\Hn^i(\Phi_\cK))=\int_{X \times X} \eu(\cC_X^{loc}) \cup \eu(\cK^{loc})
\end{equation*}
where $\cup$ is the cup product.
\end{proposition}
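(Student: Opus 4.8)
The plan is to deduce the formula from Corollary~\ref{almostfinal} by transporting the pairing of Hochschild classes it produces into ordinary cohomology via the symplectic trivialization $\tau$ of the preceding theorem.

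First I would observe that the symplectic form on $X^a$ is the opposite of the one on $X$, so that $X\times X^a$ and $X^a\times X$ are again compact complex symplectic manifolds, of complex dimension $2d_X$, to which the trivialization $\tau$ applies. Starting from Corollary~\ref{almostfinal},
\begin{equation*}
\tr_{\C^\hbar}(\Phi_\cK)=\hh_{X^a \times X}(\cC_{X^a}) \underset{X \times X^a}{\cup} \hh_{X \times X^a}(\cK),
\end{equation*}
I would localize with respect to $\hbar$. By Example~\ref{example} and Remark~\ref{representation_Ch} the left-hand side, being the trace in $\Der^b(\C^\hbar)$, becomes $\sum_i(-1)^i\tr(\Hn^i(\Phi_\cK))$, the traces being computed on the cohomology of $\bHH(\cA_X)$ after extension of scalars to $\C((\hbar))$; on the right, the classes become the localized classes $\hh(\cC_{X^a}^{loc})$ and $\hh(\cK^{loc})$, paired by the localization of the composition morphism~(\ref{pairing}).

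Next I would use Proposition~\ref{propiii} to remove the asymmetry between $X^a\times X$ and $X\times X^a$. Since $\sigma_\ast\hh_{X\times X^a}(\cC_X)=\hh_{X^a\times X}(\cC_{X^a})$, and since the composition morphism is compatible with the symmetry $\sigma_\ast$ (the commutative square of Subsection~\ref{mono} relating $\sigma_\ast$ and $\mathfrak{K}$), the pairing above may be rewritten as a pairing of $\hh(\cC_X^{loc})$ and $\hh(\cK^{loc})$, both now living on $X\times X^a$. Applying $\tau_{X\times X^a}$ to each factor then replaces these Hochschild classes by the Euler classes
\begin{equation*}
\eu(\cC_X^{loc}),\ \eu(\cK^{loc})\in \Hn^{2d_X}(X\times X;\C),
\end{equation*}
and, granting that $\tau$ carries the composition morphism into the cup product followed by $\int_{X\times X}$, yields the asserted equality $\sum_i(-1)^i\tr(\Hn^i(\Phi_\cK))=\int_{X\times X}\eu(\cC_X^{loc})\cup\eu(\cK^{loc})$.

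The hard part is precisely this last compatibility: that under the trivialization $\tau$ the DQ composition morphism~(\ref{pairing}), paired down to $\C((\hbar))$, corresponds to the ordinary cup-product-integration on singular cohomology with $\C$-coefficients. Justifying it means unwinding the explicit constructions of both $\tau$ and the pairing from Section~4.2 and Section~6.3 of \cite{KS3} and checking that they are intertwined; the delicate points are the degree bookkeeping — the two classes of degree $2d_X$ cup to the top degree $4d_X=\dim_{\mathbb{R}}(X\times X)$ — together with the orientation and sign conventions hidden in $\tau$. Once this is in place the proposition follows formally.
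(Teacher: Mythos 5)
Your proposal matches the paper's proof, which is exactly the combination you describe: the paper deduces the formula as a direct consequence of Theorem~\ref{preLunts} (equivalently Corollary~\ref{almostfinal}, localized with respect to $\hbar$) together with \cite[\S 6.3]{KS3}. The compatibility you flag as ``the hard part'' --- that $\tau$ intertwines the composition pairing (\ref{pairing}) with the cup product followed by $\int_{X \times X}$ --- is precisely the content of \cite[\S 6.3]{KS3}, so the paper simply cites it rather than re-deriving it by unwinding the constructions.
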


\begin{proof}
It is a direct consequence of \cite[§6.3]{KS3} and of Theorem \ref{preLunts}. 
\end{proof}

\begin{remark}
Similarly, it is possible to apply the results of Section \ref{generalframe} to the case of dg algebras to recover the Lefschetz-Lunts formula for dg modules.
\end{remark}

%

\end{document}